  \newtheorem{theorem}{Theorem}[section]
  \newtheorem{proposition}[theorem]{Proposition}
  \newtheorem{lemma}[theorem]{Lemma}
\theoremstyle{definition}
  \newtheorem{definition}[theorem]{Definition}
  \newtheorem{example}[theorem]{Example}
\theoremstyle{remark}
  \newtheorem{remark}[theorem]{Remark}
\newcommand{\cp}{\trianglelefteqslant}
\newcommand{\ua}{{\uparrow}}  
\DeclareSymbolFont{symbolsC}{U}{txsyc}{m}{n}
\DeclareMathSymbol{\cto}{\mathrel}{symbolsC}{128}
\DeclareMathSymbol{\dto}{\mathrel}{symbolsC}{130}
\DeclareMathSymbol{\diamondcto}{\mathrel}{symbolsC}{132}
\newcommand{\ftop}{1}
\newcommand{\fleq}{\preccurlyeq}
\newcommand{\fgeq}{\succcurlyeq}
\newcommand{\fmeet}{\curlywedge}
\DeclareMathOperator{\Fclp}{\mathcal{F}_{clp}}
\DeclareMathOperator{\Fcls}{\mathcal{F}_{closed}}
\DeclareMathOperator{\Ftop}{\mathcal{F}_{top}}
\newcommand{\topo}[1]{\mathbb{#1}}
\newcommand{\cat}[1]{\mathsf{#1}}
\newcommand{\cdash}[1][]{\mathrel{\kern.1ex\text{%
  \tikz[baseline=-.81ex, line width=.1ex, line cap=round, scale=1.1]
    {\draw (0ex,-.75ex) -- (0ex,.75ex);
     \draw (0ex,0ex) -- (1ex,0ex);
     \draw (-.4ex,0ex) arc(180:235:.91ex);
     \draw (-.4ex,0ex) arc(180:125:.91ex);}$_{#1}$}}\kern.1ex}
\newcommand{\genFil}{\mathbin{\triangledown}} 
\newcommand{\bigGenFil}{\operatorname{\bigtriangledown}}
\newcommand{\lb}{\llbracket} 
\newcommand{\rb}{\rrbracket} 
\newcommand{\mb}{\mathbb}
\renewcommand{\log}[1]{\mathsf{#1}}   
\newcommand{\mo}[1]{\mathfrak{#1}}    
\newcommand{\lan}[1]{\mathbf{#1}}     
\newcommand{\amo}[1]{\mathscr{#1}}     
\renewcommand{\iff}{\quad\text{iff}\quad}
\newcommand{\llb}{\llbracket}
\newcommand{\rrb}{\rrbracket}
\newcommand{\llp}{\llparenthesis}     %
\newcommand{\rrp}{\rrparenthesis}
\newcommand{\Ax}{\mathbin{\mathrm{Ax}}}
\newcommand{\axref}[1]{\text{\ref{#1}}}
\newcommand{\Filt}{\mathcal{F}}
\renewcommand{\phi}{\varphi}
\newcommand{\Prop}{\mathrm{Prop}}
\let\oldDiamond\Diamond
\renewcommand{\Diamond}{%
  \mathchoice{\raisebox{-1pt}{$\displaystyle\oldDiamond$}}
             {\raisebox{-1pt}{$\oldDiamond$}}
             {\raisebox{-0.5pt}{$\scriptstyle\oldDiamond$}}
             {\raisebox{-0.2pt}{$\scriptscriptstyle\oldDiamond$}}}
\title{Sub-sub-intuitionistic logic}
\author{Jonte Deakin \& Jim de Groot \\
    \footnotesize{\texttt{u7480977@anu.edu.au jim@jimdegroot.com}} \\
    \footnotesize{The Australian National University, Canberra, Ngunnawal \& Ngambri Country, Australia}}
\date{}
\date{}
\begin{document}

\maketitle

\begin{abstract}
  Sub-sub-intuitionistic logic is obtained from intuitionistic logic by
  weakening the implication and removing distributivity.
  It can alternatively be viewed as conditional weak positive logic.
  We provide semantics for sub-sub-intuitionistic logic by means of
  semilattices with a selection function, prove a categorical duality
  for the algebraic semantics of the logic, and use this to derive completeness.
  We then consider the extension of sub-sub-intuitionistic logic
  with a variety of axioms.
\end{abstract}

%
%


\section{Introduction}

  Subintuitionistic logics are propositional logics that weaken the
  laws of intuitionistic implication.
  Since the 1980s they have been studied widely. Semantically, they can be
  obtained for example by replacing
  intuitionistic Kripke frames with Kripke frames that are only
  transitive~\cite{Vis81,Vis81t}, have an omniscient element~\cite{Res94},
  or satisfy no additional constraints at all~\cite{Cor87}.
  To find even weaker logics, interpretations of implication
  in (monotone) neighbourhood frames have been proposed in
  e.g.~\cite{JonMal18,JonMal19,GroPat22-pmwi}.
  Logically, all of these correspond to weakening the axioms of
  intuitionistic logic in various ways~\cite{Saz99,SuzOno97,Wan97}.

  In this paper we weaken intuitionistic logic in an orthogonal direction,
  namely by removing distributivity.
  It turns out that this forces us to move to a
  sub-intuitionistic setting, because defining implication
  as residuated with respect to conjunction forces distributivity.
  Therefore we obtain ``sub-sub-intuitionistic logic.''
  Since there is no canonical notion of implication,
  we define a minimal logical system of sub-sub-intuitionistic logic
  which can then be extended with additional axioms to obtain
  an implication of the desired strength.
  
  The fact that there is no single notion of implication comes as no surprise:
  it is a phenomenon observed and studied in the
  quantum logic literature~\cite{Har74,DalGiu08,YouSch19}.
  Besides, both non-distributivity~\cite{Cha19} as well as
  non-classical notions of implication~\cite{Lew73}
  are phenomena that play a role in the study of linguistics.
  With our work, we hope to provide a framework in which abstract
  non-distributive logics with various types of implications can be
  investigated and compared.
  
  We approach sub-sub-intuitionistic logic as a modal extension of
  not-necessarily-distributive positive logic, which we call \emph{weak positive logic}.
  Weak positive logic was first studied by Dalla Chiara in 1976~\cite{DalChi76},
  in an effort to define a general framework for quantum logics.
  Subsequently, non-distributive logics have been studied extensively, for example
  from the perspective of proof theory~\cite{DalChi76,ResPao05,CocSee01}
  and semantics~\cite{Urq78,ResPao05,Geh06,ConEA21,Gro24-vb-ll}.
  Also, there are various frame semantics and dualities for weak positive logics,
  using doubly ordered sets~\cite{Urq78}, polarities~\cite{Har92,HarDun97}
  and semilattices~\cite{BezEA24}.
  Besides in quantum logic, non-distributivity also appears in
  theoretical computer science \cite{Gir87,Moo97}
  and linguistics~\cite{Lam58,Vri17}.
  
  In the semilattice semantics from~\cite{BezEA24},
  formulas are interpreted as filters of a semilattice.
  While the intersection of two filters is again a filter,
  the union of two filters need not be. This leads to a non-standard
  interpretation of disjunctions, which prevents distributivity.
  A duality using semilattice semantics can be obtained as a restriction
  of the duality for semilattices, due to Hofmann, Mislove and
  Stralka~\cite{HofMisStr74}.

  We choose this semantics as the starting point for our semantic development
  of sub-sub-intuitionistic logic.
  Now there are several ways of adapting these with extra structure
  to interpret implication.
  For example, we can use a binary relation as in subintuitionistic logic~\cite{Cor87},
  or a ternary relation mimicking relevance logic~\cite{RouMey73,DunRes02}.
  Alternatively, we can equip our frames with a (monotone) neighbourhood
  structure, as used in~\cite{JonMal19,GroPat22-pmwi}.
  But we use yet another way, inspired by conditional logic,
  because it allows for a binary implication-like operator that
  satisfies almost no axioms.
  
  Classical conditional logic is the extension of classical propositional
  logic with a binary operator that preserves finite conjunctions in its
  second argument. It was introduced by Chellas in 1975~\cite{Che75}
  and has since been studied in depth, see for
  example~\cite{Che80,Gab85,Nut88,Seg89,UntSch14,BalCin18,Gir19,GirNegSba19}.
  The semantic mechanism used in conditional logic for interpreting
  implication is a \emph{selection function}.
  This is a function $s$ that assigns to each world $x$ and proposition $a$
  a proposition $s(x, a)$ consisting of worlds relevant to $x$ in the
  context of $a$. So a selection function frame is a pair $(X, s)$
  consisting of a non-empty set $X$ and a function
  $s : X \times \mathcal{P}X \to \mathcal{P}X$.
  (Alternatively, this can be viewed as a proposition-indexed
  collection of relations.) A formula of the form $\phi \cto \psi$ is then
  defined to be true
  at a world $x$ if all $y \in s(x, \llb \phi \rrb)$ satisfy $\psi$,
  where $\llb \phi \rrb$ denotes the set of worlds where $\phi$ holds.

  We extend the semilattice semantics from~\cite{BezEA24} with a selection
  function to obtain frame semantics for sub-sub-intuitionistic logic.
  Therefore, sub-sub-intuitionistic logic can be viewed as
  ``conditional weak positive logic,'' and following the conditional logic
  literature we shall denote the implication by $\cto$.
  Since formulas are now interpreted as filters of a semilattice $X$,
  the selection function has type $s : X \times \Filt X \to \Filt X$,
  where $\Filt X$ denotes the set of filters of $X$.
  As in the intuitionistic case~\cite{Wei19a}, 
  we impose coherence conditions between
  $s$ and the semilattice structure to ensure that all formulas are
  interpreted as filters. We call the resulting frames
  \emph{selection L-frames}, extending the terminology from~\cite{BezEA24}.
  
  We prove that the logic is sound with respect to the class of selection
  L-frames, and we extend them to a notion of \emph{general frames} for the logic.
  Interestingly, general frames do not only come equipped with a set of
  \emph{admissible filters},
  but also with a restriction on the selection function:
  its second argument acts only on admissible filters.
  This mirrors the literature on classical conditional logic~\cite{Seg89}.
  
  Before proving completeness, we derive a duality between the algebraic
  semantics of the logic on the one hand, and a topologised version of
  our frames called \emph{selection L-spaces} on the other hand.
  This builds on the duality between lattices
  and so-called L-spaces from~\cite{BezEA24}.
  As with general frames, the selection function only acts on certain filters,
  namely clopen filters.
  
  This leaves us with an interesting phenomenon: when proving completeness
  in the usual way we need to turn a selection L-space into a selection
  L-frame. However, the selection function of a selection L-space is only
  defined for clopen filters. So in order to obtain a frame
  we need to extend it to act on all filters.
  We call such an extension a \emph{fill-in} of the selection function.
  Many different fill-ins exist, and we use one of them to finish
  the completeness argument.

  Finally, we consider a number of axioms with which we can extend the basic
  logic and give sound and completeness semantics for the resulting logics.
  Interestingly, these require various notions of fill-in
  to obtain completeness.
  We use the extension results
  to elucidate the relation between sub-sub-intuitionistic
  logic and intuitionistic conditional logic,
  and to provide an alternative semantics for intuitionistic logic
  that is based on semilattices instead of preordered or partially ordered sets.

\subsection*{Outline of the paper}
  After recalling semilattice semantics for weak positive logic
  in Section~\ref{sec:prelim},
  we introduce sub-sub-intuitionistic logic in Section~\ref{sec:imp},
  and study its algebraic and relational semantics.
  In preparation of the duality,
  we also introduce general frames and morphisms between them in Section~\ref{sec:imp}.
  In Section~\ref{sec:duality} we prove a categorical duality between
  the algebraic semantics of the logic and a topologised version of
  the frame semantics. We use this to prove completeness of the basic system.
  We then proceed to investigate a variety of extensions of the logic
  with additional axioms, give correspondence results and sound and
  complete semantics for each of these in Section~\ref{sec:extensions}.
  We conclude by pointing out several avenues for further research 
  in Section~\ref{sec:conc}.

\section{A primer on weak positive logic}\label{sec:prelim}

  We briefly recall the semantics and duality of weak positive logic
  given in~\cite{BezEA24}. Throughout this paper we let $\Prop$ be some
  arbitrary but fixed set of proposition letters.
  Let $\lan{L}(\Prop)$ denote the language generated by the grammar
  \begin{equation*}
    \phi ::= p \mid \top \mid \bot \mid \phi \wedge \phi \mid \phi \vee \phi.
  \end{equation*}
  If no confusion arises we omit reference to $\Prop$ and simply
  write $\lan{L}$.
  Logics based on $\lan{L}$ are defined as as collections of
  \emph{consequence pairs}, which are expressions of the form $\phi \cp \psi$
  where $\phi, \psi \in \mathbf{L}$, similar to positive modal logic~\cite{Dun95}.
  
\begin{definition}\label{def:logic}
  Let $\log{L}$ be the smallest set of consequence pairs
  that is closed under uniform substitution as well as
  the following axioms and rules:
  \begin{align*}
    p \cp \top,
      &\qquad \bot \cp p,
      &\text{\emph{top} and \emph{bottom}} \\
    p \cp p,
      &\qquad \frac{p \cp q \quad q \cp r}{p \cp r},
      &\text{\emph{reflexivity} and \emph{transitivity}} \\
    p \wedge q \cp p,
       \qquad p \wedge q \cp q,
      &\qquad \dfrac{r \cp p \quad r \cp q}{r \cp p \wedge q},
      &\text{\emph{conjunction rules}} \\
    p \cp p \vee q,
       \qquad q \cp p \vee q,
      &\qquad \dfrac{p \cp r \quad q \cp r}{p \vee q \cp r}
      &\text{\emph{disjunction rules}}
  \end{align*}
  If $\Gamma$ is a set of consequence pairs then we let
  $\log{L}(\Gamma)$ denote the smallest set of consequence pairs
  closed under uniform substitution, the axioms and rules mentioned above, and those in $\Gamma$.
  We write $\phi \vdash_{\Gamma} \psi$ if $\phi \cp \psi \in \log{L}(\Gamma)$
  and $\phi \dashv\vdash_{\Gamma} \psi$ if $\phi \vdash_{\Gamma} \psi$
  and $\psi \vdash_{\Gamma} \phi$.
  If $\Gamma$ is the empty set then we abbreviate this to
  $\phi \vdash \psi$ and $\phi \dashv\vdash \psi$.
\end{definition}

  The algebraic semantics of $\log{L}(\Gamma)$ is given by
  lattices. We write $\cat{Lat}$ for the category of lattices
  and homomorphisms.

\begin{definition}
  Let $A$ be a lattice with operations $\top_A, \bot_A, \wedge_A, \vee_A$,
  and induced order $\leq_A$.
  A \emph{lattice model} is a pair $\amo{A} = (A, \sigma)$ 
  consisting of a lattice $A$ and an assignment $\sigma : \Prop \to A$
  of the proposition letters.
  The assignment $\sigma$ uniquely extends to a map
  $\llp \cdot \rrp_{\amo{A}} : \lan{L} \to A$
  by interpreting connectives via their lattice counterparts.
  
  We say that a lattice $A$ \emph{validates} a consequence pair $\phi \cp \psi$
  if $\llp \phi \rrp_{\amo{A}} \leq_A \llp \psi \rrp_{\amo{A}}$ for
  all lattice models $\amo{A}$ based on $A$, and denote this by $A \cdash \phi \cp \psi$.
  Given a set of consequence pairs $\Gamma$, we write
  $\cat{Lat}(\Gamma)$ for the full subcategory of $\cat{Lat}$ whose objects
  validate all consequence pairs in $\Gamma$.
\end{definition}
  
  As expected, this gives sound and complete algebraic
  semantics~\cite[Theorem~3.5]{BezEA24}:

\begin{theorem}\label{thm:alg-sem}
  We have $\phi \vdash_{\Gamma} \psi$ if and only if $\phi \cdash[\Gamma] \psi$.
\end{theorem}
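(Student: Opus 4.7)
The proof plan is the standard Lindenbaum--Tarski argument, split into soundness (the \emph{if} direction) and completeness (the \emph{only if} direction). Throughout, I would use the fact that the axioms and rules of $\log{L}$ have been designed to mirror exactly the lattice axioms.

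For soundness, I would proceed by induction on the length of a derivation of $\phi \cp \psi$ in $\log{L}(\Gamma)$. The base cases are routine: $p \cp \top$, $\bot \cp p$, reflexivity, and the one-sided projection/injection axioms for $\wedge$ and $\vee$ all hold in any lattice $A$ under any assignment, because $\llp p \rrp_\amo{A} \leq_A \top_A$, $\bot_A \leq_A \llp p \rrp_\amo{A}$, etc. The rule cases (transitivity, the $\wedge$-introduction rule, and the $\vee$-elimination rule) reduce to the universal properties of meets and joins in $A$. Closure under uniform substitution is handled by the observation that substituting formulas for proposition letters corresponds to reinterpreting under a different assignment. Finally, the axioms in $\Gamma$ are validated by definition of $\cat{Lat}(\Gamma)$.

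For completeness, I would build the Lindenbaum--Tarski algebra. Define $\phi \sim \psi$ iff $\phi \dashv\vdash_\Gamma \psi$. Using reflexivity and transitivity, $\sim$ is an equivalence relation; using the conjunction and disjunction rules (together with transitivity) one checks that $\sim$ is a congruence, so $A_\Gamma := \lan{L}/{\sim}$ carries well-defined operations $[\phi] \wedge [\psi] := [\phi \wedge \psi]$, $[\phi] \vee [\psi] := [\phi \vee \psi]$, $\top := [\top]$, $\bot := [\bot]$. The axioms and rules of Definition~\ref{def:logic} then translate directly into the lattice axioms on $A_\Gamma$, with induced order $[\phi] \leq [\psi]$ iff $\phi \vdash_\Gamma \psi$.

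It remains to verify that $A_\Gamma \in \cat{Lat}(\Gamma)$ and to extract the refuting model. For the first, take $\chi \cp \chi' \in \Gamma$ and any assignment $\sigma : \Prop \to A_\Gamma$; lift $\sigma$ to a substitution $\bar\sigma$ on $\lan{L}$ by picking representatives, so that $\llp \chi \rrp_{(A_\Gamma,\sigma)} = [\bar\sigma(\chi)]$. Closure of $\log{L}(\Gamma)$ under uniform substitution gives $\bar\sigma(\chi) \vdash_\Gamma \bar\sigma(\chi')$, hence the required inequality in $A_\Gamma$. For the refutation, use the canonical assignment $\sigma_0(p) := [p]$, which gives $\llp \phi \rrp_{(A_\Gamma,\sigma_0)} = [\phi]$; so $\phi \not\vdash_\Gamma \psi$ forces $[\phi] \not\leq [\psi]$, witnessing $A_\Gamma \not\cdash \phi \cp \psi$.

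The bulk of the work is bureaucratic: verifying the congruence property and the lattice axioms on $A_\Gamma$. The one step that requires genuine care, and that I expect to be the main obstacle, is the verification that $A_\Gamma$ lies in $\cat{Lat}(\Gamma)$, because it hinges on the interplay between the \emph{syntactic} closure of $\log{L}(\Gamma)$ under uniform substitution and the \emph{semantic} quantification over all assignments into the quotient algebra---essentially, one must argue that every assignment into $A_\Gamma$ factors, up to choice of representatives, through a syntactic substitution.
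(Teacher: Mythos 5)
Your proposal is correct and follows exactly the standard Lindenbaum--Tarski argument that the paper relies on: this theorem is recalled from \cite[Theorem~3.5]{BezEA24}, and the paper explicitly describes the proof of its conditional analogue (Theorem~\ref{thm:alg-sound-compl}) as ``a standard Lindenbaum argument, akin to that in~\cite[Section~3.1]{BezEA24}.'' Your treatment of the one delicate point---that membership of the Lindenbaum algebra in $\cat{Lat}(\Gamma)$ reduces to closure of $\log{L}(\Gamma)$ under uniform substitution via factoring assignments through syntactic substitutions---is exactly right.
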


  Formulas from $\lan{L}$ can be interpreted in
  meet-semilattices with a valuation~\cite{Dmi21}.
  The intuition behind this is that the collection of filters on a
  meet-semilattice is closed under arbitrary intersections.
  Therefore it forms a complete lattice, but disjunctions are not
  given by unions. This gives rise to a non-standard interpretation of
  disjunctions which prevents distributivity.

  In this paper, by a \emph{meet-semilattice} we mean a partially ordered
  set in which every finite subset has a greatest lower bound, called its \emph{meet}.
  The meet of $x$ and $y$ is denoted by $x \fmeet y$, reserving
  the symbol $\wedge$ for conjunctions of formulas.
  The empty meet is the top element, denoted by $1$.
  If $(X, 1, \fmeet)$ is a meet-semilattice then we write $\fleq$ for the
  partial order given by $x \fleq y$ iff $x \fmeet y = x$.
  A \emph{filter} of a meet-semilattice $(X, 1, \fmeet)$ is a subset $F$ of $X$
  which is upward closed under $\fleq$ and closed under finite meets.
  Filters are nonempty because they contain the empty meet, $1$.

\begin{definition}\label{def:L-model}
  An \emph{L-frame} is a meet-semilattice $(X, 1, \fmeet)$.
  An \emph{L-model} is an L-frame $(X, 1, \fmeet)$ together with a valuation
  $V$ that assigns to each proposition letter $p$ a
  filter $V(p)$ of $(X, 1, \fmeet)$.
  The interpretation of a formulas from $\mathbf{L}$ at a world $x$ of
  an L-model $\mo{M} = (X, 1, \fmeet, V)$ is defined recursively via
  \begin{align*}
    \mo{M}, x \Vdash p &\iff x \in V(p) \\
    \mo{M}, x \Vdash \top &\phantom{\iff}\text{always} \\
    \mo{M}, x \Vdash \bot &\iff x = 1 \\
    \mo{M}, x \Vdash \phi \wedge \psi &\iff \mo{M}, x \Vdash \phi
                     \text{ and } \mo{M}, x \Vdash \psi \\
    \mo{M}, x \Vdash \phi \vee \psi
      &\iff \exists y, z \in W \text{ s.t. } y \fmeet z \fleq x
            \text{ and }
            \mo{M}, y \Vdash \phi \text{ and } \mo{M}, z \Vdash \psi
  \end{align*}
  We denote the \emph{truth set} of $\phi$ by
  $\llb \phi \rrb^{\mo{M}} := \{ x \in X \mid \mo{M}, x \Vdash \phi \}$.
\end{definition}

  It can be shown that the truth set of every formula in any L-model $\mo{M}$
  is a filter in the underlying L-frame.
  In particular, the truth set of a formula of the form $\phi \vee \psi$
  is the smallest filter containing both $\llb \phi \rrb^{\mo{M}}$
  and $\llb \psi \rrb^{\mo{M}}$. We can explicitly write this filter as
  $
      \{ x \in X \mid y \fmeet z \fleq x \text{ for some }
           y \in \llb \phi \rrb^{\mo{M}} \text{ and }
           z \in \llb \psi \rrb^{\mo{M}} \}.
  $

\begin{definition}
  The collection of filters on an L-frame $\mo{X} = (X,\ftop,\fmeet)$ forms a lattice
  $(\mathcal{F}(\mo{X}),X,\{\ftop\},\cap,\genFil)$ where 
  \begin{equation*}
    p \genFil q := \ua\{x \fmeet y \in X \mid x \in p, y \in q\}.
  \end{equation*}
  We call this lattice the \emph{complex algebra} of $\mo{X}$ and denote
  it by $\mo{X}^+$.
\end{definition}

  In fact, the collection of filters on an L-frame forms a complete lattice,
  with arbitrary meets given nby intersections, and arbitrary joins given as follows:
  $$
  \bigGenFil Q := \bigcup \{q_1 \genFil \cdots \genFil q_n \mid n \in \mb{N} \text{ and } q_1,
  \ldots, q_n \in Q\}.
  $$

\begin{definition}\label{def:lmorphisms}
  An \emph{L-morphism} between L-frames
  $(X, 1, \fmeet)$ and $(X', 1', \fmeet')$ is a semilattice homomorphism
  $f : X \to X'$ such that for all $x \in X$ and $y', z' \in X'$: 
  \begin{itemize}
    \item $f(x) = 1'$ iff $x = 1$;
    \item If $y' \fmeet z' \fleq' f(x)$, then there exist $y, z \in X$ such that
          $y' \fleq' f(y)$ and $z' \fleq' f(z)$ and $y \fmeet z \fleq x$.
  \end{itemize}
  The second condition can be depicted as follows:
  \begin{equation*}
    \begin{tikzcd}[scale=1]
        & [-1.3em]
          x \arrow[rrrd, "f"]
            \arrow[dddd, dashed, -]
        & [-1.3em]
        & [-1em]
        & [-1.5em]
        & [-1.5em] \\ [-2em]
        &
        &
        &
        & f(x)
        & \\ [-2.3em]
      u     \arrow[ddr, dashed, -]
        &
        & v \arrow[ddl, dashed, -]
            \arrow[rrrd, dashed, "f"]
        &&& \\ [-1.5em]
        &
        &
        & f(y) \arrow[dd, dashed, -]
               \arrow[lllu, dashed, <-, crossing over, "f" pos=.25]
        &
        & f(z) \arrow[dd, dashed, -] \\ [-2em]
        & y \fmeet z
        &&&& \\  [-2em]
        &
        &
        & y' \arrow[dr, -]
        &
        & z' \arrow[dl, -] \\ [-1.3em]
        &
        &
        &
        & y' \fmeet' z'  \arrow[uuuuu, crossing over, -]
        &
    \end{tikzcd}
  \end{equation*}
  An \emph{L-morphism} between L-models $(X, 1, \fmeet, V)$ and
  $(X', 1', \fmeet', V')$ is an L-morphism $f$ between the underlying
  frames such that $V(p) = f^{-1}(V'(p))$ for all $p \in \Prop$.
\end{definition}

  We obtain a duality for the category of lattices and lattice homomorphisms
  by using a topologised version of L-frames, called L-spaces.

\begin{definition} \label{def:lspaces}
  An \emph{L-space} is a tuple $\topo{X} = (X, \ftop, \fmeet,\tau)$ such that:
  \begin{enumerate}\itemsep=0pt
    \item $(X, \ftop, \fmeet)$ is a semilattice;
    \item $(X,\tau)$ is a compact topological space;
    \item If $a$ and $b$ are clopen filters of $\topo{X}$,
          then so is $a \genFil b$;
    \item $\topo{X}$ satisfies the \emph{HMS separation axiom:}
          \begin{equation*}
            \forall x, y \in X \; (\text{if } x \not \fleq y 
            \text{ then there exists a clopen filter } a \text{ s.t. } 
            x \in a \text{ and } y \notin a ).
          \end{equation*}
  \end{enumerate}
  An \emph{L-space morphism} is a continuous L-morphism. The category of
  L-spaces and L-space morphisms is denoted by $\cat{LSpace}$.
\end{definition}

  We write $\Fclp(\topo{X})$ the collection of clopen filters of an L-space $\topo{X}$.
  By construction, it forms a lattice with top, bottom, meet and join given
  by $X$, $\{ \ftop \}$, $\cap$ and $\genFil$.

\begin{definition}
  A \emph{clopen valuation} for an L-space $\topo{X}$ is a map
  $V : \Prop \to \Fclp(\topo{X})$.
  We call a pair $\topo{M} = (\topo{X}, V)$ of an L-space and a clopen valuation
  an \emph{L-space model}. The interpretation $\lb \phi\rb^\topo{M}$ of a formula
  $\phi$ in an L-space model $\topo{M} = (\topo{X}, \sigma)$ is defined as in
  the underlying L-model (see Definition~\ref{def:L-model}).

  An L-space model $\topo{M}$ \emph{validates} a consequence
  pair $\phi \cp \psi$ if $\lb \phi\rb^\topo{M} \subseteq \lb \psi \rb^\topo{M}$,
  and an L-space $\topo{X}$ \emph{validates} $\phi \cp \psi$ if
  $(\topo{X}, V)$ validates $\phi \cp \psi$ for every clopen valuation~$V$.
  We write $\topo{X} \Vdash \phi \cp \psi$ (or $\topo{M} \Vdash \phi \cp \psi$)
  if an L-space $\topo{X}$ (or model $\topo{M}$) validates $\phi \cp \psi$.
\end{definition}

  The assignment $\Fclp$ can be extended to a contravairant functor
  $\cat{LSpace} \to \cat{Lat}$ by defining the action of
  $\Fclp$ on an L-space morphism $f$ as $\Fclp(f) := f^{-1}$.
  In the converse direction, we have:

\begin{definition}
  Let $A$ be a lattice and write $(\Filt(A), A, \cap)$ for the semilattice of
  filters of $A$. Let $\tau_A$ be the topology on $\Filt(A)$ generated by
  \begin{equation*}
    \{\theta_A(a) \mid a \in A \} \cup \{ \theta_A(a)^c \mid a \in A \},
  \end{equation*}
  where $\theta_A(a) = \{ p \in \mathcal{F}(A) \mid a \in p\}$ and
  $\theta_A(a)^c = \mathcal{F}(A) \setminus \theta_A(a)$.
  Then the tuple $\Ftop(A) := (\Filt(A), A, \cap, \tau_A)$ is an L-space.
  Defining $\Ftop(h) := h^{-1}$ for a lattice homomorphism $h$
  yields the contravariant functor
  $\Ftop : \cat{Lat} \to \cat{LSpace}$.
\end{definition}

  We can now state the duality between lattices and L-spaces~\cite[Theorem 2.14]{BezEA24}.

\begin{theorem}\label{thrm:weakduality}
  The contravariant functors $\Fclp$ and $\Ftop$ establishes
  the categorical duality $\cat{LSpace} \equiv^{op} \cat{Lat}$.
\end{theorem}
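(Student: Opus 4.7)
The plan is to construct the unit and counit of the equivalence and verify that both are natural isomorphisms. The counit component $\theta_A : A \to \Fclp(\Ftop(A))$ is the map $a \mapsto \{p \in \Filt(A) \mid a \in p\}$ already used to define $\tau_A$, which by construction lands in $\Fclp(\Ftop(A))$. The unit component $\eta_{\topo{X}} : \topo{X} \to \Ftop(\Fclp(\topo{X}))$ is the evaluation map $x \mapsto \{a \in \Fclp(\topo{X}) \mid x \in a\}$, which is immediately a filter of $\Fclp(\topo{X})$.

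To show $\theta_A$ is a lattice isomorphism, preservation of $\top, \bot, \cap, \genFil$ is routine (the join case requires unwinding the definition of $\genFil$ in $\Filt(A)$). Injectivity is the standard order-embedding argument using principal filters: if $a \not\leq_A b$ then ${\uparrow}a$ lies in $\theta_A(a) \setminus \theta_A(b)$. Surjectivity is the main obstacle on this side: given a clopen filter $C$ of $\Ftop(A)$, compactness forces $C$ to be cut out by finitely many subbasic opens of the form $\theta_A(a_i)$ and complements $\theta_A(b_j)^c$, and using the lattice operations of $A$ one assembles an element $a \in A$ with $C = \theta_A(a)$.

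To show $\eta_{\topo{X}}$ is an L-space isomorphism, injectivity is immediate from the HMS separation axiom. Preservation of $\ftop$ and $\fmeet$ is clear since clopen filters are closed under intersection in $\Fclp(\topo{X})$. The filter-splitting condition of Definition~\ref{def:lmorphisms} --- namely, that $u \cap v \fleq \eta_{\topo{X}}(x)$ implies the existence of $y, z \in \topo{X}$ with $u \fleq \eta_{\topo{X}}(y)$, $v \fleq \eta_{\topo{X}}(z)$, and $y \fmeet z \fleq x$ --- is the key L-morphism condition, which I would verify via a compactness argument inside $\topo{X}$. Continuity of $\eta_{\topo{X}}$ follows immediately from how $\tau_{\Filt(\Fclp(\topo{X}))}$ is generated. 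Surjectivity of $\eta_{\topo{X}}$ is the other main obstacle: given a filter $q$ of clopen filters of $\topo{X}$, the finite intersection property of $q$ combined with compactness of $\topo{X}$ yields a point $x$ lying in every $a \in q$, and the HMS axiom then forces $\eta_{\topo{X}}(x) = q$; to conclude that $\eta_{\topo{X}}$ is a homeomorphism one also needs that it takes clopen filters to clopen filters, which follows from the previous step applied to its inverse.

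Naturality of $\theta$ and $\eta$ reduces to a commuting-square check that is automatic once one observes that both $\Fclp$ and $\Ftop$ act on morphisms by preimage. The hardest steps are the two surjectivity arguments, which in both directions rest on the interplay between compactness of $\topo{X}$, the HMS separation axiom, and the subbasis $\{\theta_A(a), \theta_A(a)^c \mid a \in A\}$ of $\tau_A$.
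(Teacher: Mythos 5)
Your overall strategy matches what the paper has in mind: Theorem~\ref{thrm:weakduality} is recalled from~\cite{BezEA24} rather than proved here, but the units the paper records immediately after the statement are exactly your $\theta_A$ and $\eta_{\topo{X}}$, and the surjectivity of $\theta_A$ (every clopen filter of $\Ftop(A)$ has the form $\theta_A(a)$) is likewise noted by the paper as part of the package. Most of your outline is sound, if only sketched at the two compactness steps you flag.

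There is, however, one step that fails as written: the surjectivity of $\eta_{\topo{X}}$. A point $x$ lying in every $a \in q$ only gives the inclusion $q \subseteq \eta_{\topo{X}}(x)$, and the HMS axiom does not upgrade this to equality --- HMS separates points of $X$ by clopen filters; it says nothing that would prevent $x$ from lying in clopen filters outside $q$. Concretely, take $q = \{X\}$, the smallest filter of $\Fclp(\topo{X})$: every point of $X$ lies in every member of $q$, yet $\eta_{\topo{X}}(\ftop) = \Fclp(\topo{X})$ is as large as possible, so an arbitrary choice of $x \in \bigcap q$ does not work. What you actually need is a point in $\bigcap_{a \in q} a \cap \bigcap_{b \notin q} b^{c}$. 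This family of closed sets does have the finite intersection property, but seeing that requires an extra lemma: if a filter $a$ of the semilattice is contained in a finite union $b_1 \cup \dots \cup b_m$ of filters, then $a \subseteq b_j$ for some $j$ (proved from up-closure and meet-closure of filters), so $a \in q$ and $b_1, \dots, b_m \notin q$ force $a \cap b_1^{c} \cap \dots \cap b_m^{c} \neq \emptyset$ since $q$ is up-closed. Compactness then yields the desired $x$ with $\eta_{\topo{X}}(x) = q$. This prime-filter-style argument is the missing idea; with it in place, the rest of your plan (surjectivity of $\theta_A$ via compactness and the subbasis, the filter-splitting condition for $\eta_{\topo{X}}$, naturality by preimage) goes through.
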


  For future reference, we recall the units of the duality.
  These are given by
  $\theta : id_\cat{Lat} \to \Fclp\Ftop$ and $\eta : id_\cat{LSpace} \to \Ftop\Fclp$,
  defined on components via
  $\theta_A(a) = \{ p \in \Filt(A) \mid a \in p \}$ and
  $\eta_{\topo{X}}(x) = \{ a \in \Fclp(\mathbb{X}) \mid x \in a \}$.
  Besides, we note that for any lattice $A$, the clopen filters
  of $\Ftop(A)$ are exactly the filters of the form $\theta_A(a)$ for some $a \in A$.

\section{Weak positive logic with implication}\label{sec:imp}

  The goal of this paper is to extend weak positive logic with a notion of
  implication. Taking stock of existing logics such as intuitionistic logic,
  relevance logic and conditional logic, there appear to be several ways of
  doing so. Perhaps the most obvious method would be to
  define implication as residuated with respect to conjunction.
  Unfortunately, we are stopped dead in our tracks, because this would
  automatically make our logic distributive.

\begin{proposition}\label{prop:residdist}
  Suppose we extend $\log{L}$ from Definition~\ref{def:logic} with a binary
  operator $\to$ that satisfies the congruence rules as well as the following residuation
  rules
  \begin{equation*}
    \frac{\phi \land \psi \cp \chi}{\phi \cp \psi \cto \chi}
    \quad\text{and}\quad
    \frac{\phi \cp \psi \cto \chi}{\phi \land \psi \cp \chi}.
  \end{equation*}
  Then the resulting logic is distributive.
\end{proposition}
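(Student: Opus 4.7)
The plan is to run the standard adjunction-preserves-joins argument at the syntactic level. Recall that one direction of distributivity, namely $(\phi \wedge \psi) \vee (\phi \wedge \chi) \vdash \phi \wedge (\psi \vee \chi)$, is immediate from the conjunction and disjunction rules of $\log{L}$ alone. So the only thing I need to derive from the residuation rules is the converse inclusion
\[
  \phi \wedge (\psi \vee \chi) \;\vdash\; (\phi \wedge \psi) \vee (\phi \wedge \chi).
\]

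To prove this, abbreviate $\alpha := (\phi \wedge \psi) \vee (\phi \wedge \chi)$. First I would note that by the disjunction axioms $\phi \wedge \psi \vdash \alpha$ and $\phi \wedge \chi \vdash \alpha$. Applying the upward residuation rule to each of these yields $\psi \vdash \phi \cto \alpha$ and $\chi \vdash \phi \cto \alpha$. Then the disjunction rule for $\vee$ on the left gives $\psi \vee \chi \vdash \phi \cto \alpha$. Finally, the downward residuation rule turns this into $\phi \wedge (\psi \vee \chi) \vdash \alpha$, which is what we wanted.

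Combining the two directions shows $\phi \wedge (\psi \vee \chi) \dashv\vdash (\phi \wedge \psi) \vee (\phi \wedge \chi)$, so the extended logic is distributive. I do not expect any real obstacle: the argument is essentially the categorical fact that a left adjoint preserves colimits, specialised to posets and carried out one inference step at a time. The congruence rules for $\cto$ are not even invoked, since we only need the two residuation rules to move between $\wedge$ and $\cto$ on the two sides of $\cp$.
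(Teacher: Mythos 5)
Your derivation is correct, but it is a genuinely different argument from the one in the paper. The paper disposes of the proposition in one line by appealing to the algebraic semantics: the congruence and residuation rules force the Lindenbaum--Tarski algebra (equivalently, every algebraic model) to be a Heyting algebra, and Heyting algebras are known to be distributive, citing \cite[Proposition~1.5.3]{esakia}. You instead carry out the underlying computation purely proof-theoretically, deriving $\phi \wedge (\psi \vee \chi) \vdash (\phi \wedge \psi) \vee (\phi \wedge \chi)$ step by step from the residuation rules and the lattice rules, and observing that the converse entailment already holds in $\log{L}$. Your route is self-contained and makes explicit exactly which rules are used (notably, that the congruence rules are not needed), whereas the paper's route is shorter but leans on the soundness/completeness of the algebraic semantics and an external reference. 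One small point worth making explicit in your write-up: the residuation rule as stated takes you from $\phi \wedge \psi \cp \alpha$ to $\phi \cp \psi \cto \alpha$, so to obtain $\psi \vdash \phi \cto \alpha$ you must first pass through the derivable commutation $\psi \wedge \phi \dashv\vdash \phi \wedge \psi$ and transitivity before applying the rule with the roles of $\phi$ and $\psi$ swapped; this is routine but should be said, since the rule is not literally symmetric in its two premise conjuncts.
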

\begin{proof}
  The given rules would turn our algebraic semantics into Heyting algebras,
  which are known to be distributive~\cite[Proposition 1.5.3]{esakia}.
\end{proof}

  To keep our implication as flexible as possible, and maintain non-distributivity,
  we choose to mimic the approach of conditional logic. That is, we extend
  $\log{L}$ with a binary modal operator $\cto$ that is normal in its second
  argument, but does not satisfy any axioms for its first argument.

\subsection{Adding conditional implication}

  We extend the language from Section~\ref{sec:prelim} with a binary operator
  $\cto$ called \emph{conditional implication}. We use this to formally define
  sub-sub-intuitionistic logics and their algebraic semantics. 

\begin{definition}\label{def:syntax}
  Let $\Prop$ be some set of proposition letters and
  define $\mathbf{CL}(\Prop)$ to be the language generated by the grammar
  \begin{equation*}
    \phi ::= p \in \Prop \mid \top \mid \bot \mid \phi \land \phi \mid \phi \lor \phi \mid \phi \cto \phi.
  \end{equation*}
  We write $\mathbf{CL}$ when $\Prop$ is clear from context or irrelevant.
\end{definition}

  Just like Definition~\ref{def:logic}, sub-sub-intuitionistic logics
  are formulated as collections of consequence pairs. Henceforth, by
  ``consequence pair'' we mean an expression of the form $\phi \cp \psi$
  where $\phi$ and $\psi$ are taken from $\mathbf{CL}$.

\begin{definition}
  Let $\log{CL}$ be the smallest collection of consequence pairs
  that is closed under under uniform substitution, 
  the axioms and rules from Definition~\ref{def:logic}, and that also contains
  the following axioms:
  \begin{align*}
    \top &\cp p \cto \top
         &\text{\emph{modal top}} \\
    p \cto (q \land r) &\cp (p \cto q) \land (p \cto q)
    &\text{\emph{monotoncity}} \\
    (p \cto q) \land (p \cto r) &\cp p \cto (q \land r)
    &\text{\emph{normality}}
  \end{align*}
  and is closed under the congruence rules:
  \begin{equation*}
    \frac{p \cp q \quad q \cp p}
         {p \cto r \cp q \cto r}
    \quad\text{and}\quad
    \frac{p \cp q \quad q \cp p}
         {r \cto p \cp r \cto q}.
  \end{equation*}
  If $\Gamma$ is a set of consequence pairs then we let
  $\log{L}(\Gamma)$ denote the smallest set of consequence pairs that
  contains $\Gamma$ and is closed
  under uniform substitution and the axioms and rules mentioned above.
  We write $\phi \vdash_\Gamma \psi$ if $\phi \cp \psi \in \log{L}(\Gamma)$
  and $\phi \dashv\vdash_\Gamma \psi$ if both $\phi \vdash_\Gamma \psi$ and
  $\psi \vdash_\Gamma \phi$. 
  If $\Gamma = \emptyset$ then we write $\phi \vdash \psi$ and
  $\phi \dashv\vdash \psi$.
\end{definition}

  The algebraic semantics of the logic is given by lattices with a binary
  operator whose second argument preserves finite meets.

\begin{definition}\label{def:algebra}
  A \emph{conditional lattice} is a tuple
  $\amo{A} = (A, \top, \bot, \wedge, \vee, \cto)$
  such that $(A,\top,\bot,\land,\lor)$ is a lattice and 
  $\cto$ is a binary operator satisfying, for all $a, b, c \in A$:
  \begin{equation}\label{eq:cond-lattice}
    \top = a \cto \top
    \quad\text{and}\quad
    (a \cto b) \land (a \cto c) = a \cto (b \land c).
  \end{equation}
  The category of conditional lattices and homomorphisms is denoted by $\cat{CLat}$.
\end{definition}
  A \emph{conditional lattice model} is a pair
  $\amo{M} = (\amo{A}, \sigma)$ consisting of a conditional lattice $\amo{A}$
  and an assignment $\sigma : \Prop \to A$ of the proposition letters.
  The assignment $\sigma$ extends uniquely to a map
  $\llp \cdot \rrp_{\amo{M}} : \mathbf{CL} \to \amo{A}$ by interpreting
  connectives via their conditional lattice counterparts.
  A conditional lattice model $\amo{M}$ \emph{validates} a consequence pair
  $\phi \cp \psi$ if $\llp \phi \rrp_{\amo{M}} \leq \llp \psi \rrp_{\amo{M}}$,
  where $\leq$ is the order underlying the lattice.
  A conditional lattice $\amo{A}$ is said to \emph{validate} a consequence
  pair $\phi \cp \psi$ if every conditional lattice  model of the form
  $(\amo{A}, \sigma)$ validates it.

\begin{definition}
  Let $\Gamma$ be a set of consequence pairs.
  Then $\cat{CLat}(\Gamma)$ is defined as the full subcategory of $\cat{CLat}$ whose
  objects validate all consequence pairs in $\Gamma$.
  We write $\phi \cdash[\Gamma] \psi$ if every $\amo{A} \in \cat{CLat}(\Gamma)$
  validates $\phi \cp \psi$
  and abbreviate $\phi \cdash[\emptyset] \psi$ to $\phi \cdash \psi$.
\end{definition}

  We note that $\cat{CLat}(\Gamma)$ forms a variety of algebras,
  given by the equations defining lattices together with the equations
  obtained from $\Gamma$ by replacing $\cp$ with $\leq$ and viewing proposition
  letters as variables.
  A standard Lindenbaum argument, akin to that in~\cite[Section~3.1]{BezEA24},
  can be used to prove that conditional lattices indeed provide the algebraic semantics
  logics of the form $\log{CL}(\Gamma)$.

\begin{theorem}\label{thm:alg-sound-compl}
  Let $\Gamma \cup \{ \phi \cp \psi \}$ be a set of consequence pairs. Then
  \begin{equation*}
    \phi \vdash_{\Gamma} \psi \iff \phi \cdash[\Gamma] \psi.
  \end{equation*}
\end{theorem}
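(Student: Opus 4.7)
The plan is to adapt the standard Lindenbaum--Tarski argument underlying Theorem~\ref{thm:alg-sem} to accommodate the new operator $\cto$. Soundness amounts to checking that each axiom is validated in every conditional lattice and that each rule preserves validity in $\cat{CLat}(\Gamma)$; the substantive part is completeness, for which I would build a canonical conditional lattice directly from the syntax and evaluate the assumption $\phi \cdash[\Gamma] \psi$ in it.

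For the soundness direction, the axioms and rules inherited from $\log{L}$ are handled exactly as in Theorem~\ref{thm:alg-sem}. The modal top axiom $\top \cp p \cto \top$ is immediate from the first half of (\ref{eq:cond-lattice}); taken together, the monotonicity and normality axioms give both inequalities of the second equation in (\ref{eq:cond-lattice}), noting that $a \cto \top = \top$ forces $a \cto (b \land c) \leq a \cto b, a \cto c$ via the rule for meets. The two congruence rules are validated because $\cto$ is an operation on $\amo{A}$, hence respects equality of its arguments, and $a = b$ in a lattice is equivalent to $a \leq b$ and $b \leq a$. Finally, every $\amo{A} \in \cat{CLat}(\Gamma)$ validates $\Gamma$ by definition.

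For completeness, I would define $\phi \equiv_\Gamma \psi$ iff $\phi \dashv\vdash_\Gamma \psi$ and form the quotient $\amo{L}_\Gamma := \mathbf{CL}/{\equiv_\Gamma}$. The lattice operations on equivalence classes are defined pointwise, and $[\phi] \cto [\psi] := [\phi \cto \psi]$. The reflexivity, transitivity, conjunction and disjunction rules from Definition~\ref{def:logic} make $\amo{L}_\Gamma$ a lattice in the usual way, with $[\phi] \leq [\psi]$ iff $\phi \vdash_\Gamma \psi$; the modal top and monotonicity/normality axioms translate directly into the defining equations (\ref{eq:cond-lattice}) of a conditional lattice. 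Closure of $\log{L}(\Gamma)$ under uniform substitution ensures $\amo{L}_\Gamma \in \cat{CLat}(\Gamma)$. The canonical assignment $\sigma(p) := [p]$ yields a model $\amo{M}_\Gamma$ with $\llp \phi \rrp_{\amo{M}_\Gamma} = [\phi]$ for every $\phi \in \mathbf{CL}$, proved by induction on $\phi$. Consequently, $\phi \cdash[\Gamma] \psi$ implies $[\phi] \leq [\psi]$ in $\amo{L}_\Gamma$, i.e.\ $\phi \vdash_\Gamma \psi$.

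The main obstacle is ensuring that $\cto$ descends to a well-defined operation on equivalence classes, which is precisely what the congruence rules are for: if $\phi \equiv_\Gamma \phi'$ and $\psi \equiv_\Gamma \psi'$, the two rules yield $\phi \cto \psi \dashv\vdash_\Gamma \phi' \cto \psi \dashv\vdash_\Gamma \phi' \cto \psi'$, so $[\phi \cto \psi] = [\phi' \cto \psi']$. Once this is secured, the remaining bookkeeping is purely routine and follows the template in~\cite[Section~3.1]{BezEA24}, with the conditional clause in the inductive computation of $\llp \phi \cto \psi \rrp_{\amo{M}_\Gamma}$ being immediate from the definition of $\cto$ on $\amo{L}_\Gamma$.
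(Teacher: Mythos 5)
Your proposal is correct and is exactly the argument the paper has in mind: the paper gives no explicit proof, citing only ``a standard Lindenbaum argument, akin to that in~\cite[Section~3.1]{BezEA24},'' and your quotient construction with the congruence rules securing well-definedness of $\cto$ on equivalence classes is precisely that argument extended to the conditional operator.
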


  We complete this subsection by verifying that the logic we have defined
  does not unexpectedly satisfy distributivity.

\begin{example}
  Let $(A, \top, \bot, \wedge, \vee)$ be any lattice. Then defining a binary
  operator $\cto'$ on $A$ by $a \cto' b = \top$ for all $a, b \in A$
  yields a conditional lattice $\amo{A} = (A, \top, \bot, \wedge, \vee, \cto')$.
  Since the lattice we start with need not be distributive, the same holds
  for $\amo{A}$.
  Therfore 
  $p \wedge (q \vee r) \cp (p \wedge q) \vee (p \wedge r)$ is not a theorem of $\log{CL}$.
\end{example}

  In a similar way, it can be shown that none of the axioms considered
  in Section~\ref{sec:extensions} can be derived in $\log{CL}$.

\subsection{Selection L-frames}\label{subsec:sel-frm}

  The standard way of interpreting conditional implication in a classical
  setting is via a selection function~\cite[Section~3]{Che75}.
  In this context, a selection function frame consists of a non-empty set $X$
  of worlds and a function $s : X \times \mathcal{P}X \to \mathcal{P}X$, where
  $\mathcal{P}X$ denotes the powerset of $X$.
  Alternatively the selection function can be viewed as a
  subset-indexed collection of relations
  $\{ R_a \mid a \subseteq X \}$~\cite{Seg89},
  or as a ternary relation $R \subseteq X \times X \times \mathcal{P}X$~\cite[Definition~3.1]{UntSch14}.

  This approach was adapted to an intuitionistic setting
  by Weiss~\cite{Wei19a}, who equipped an intuitionistic Kripke frame
  $(X, \leq)$ with a subset-indexed set of relations $\{ R_a \mid a \subseteq X \}$
  such that $({\leq} \circ R_a) \subseteq (R_a \circ {\leq})$ for all relations.
  However, by the nature of the interpretation of conditional implication,
  we only need relations $R_a$ where $a$ is upward closed in $(X, \leq)$.
  Taking this into account, we may refine Weiss' semantics to an
  intuitionistic Kripke frame $(X, \leq)$ with a selection function
  $s : X \times \mathcal{up}(X, \leq) \to \mathcal{up}(X, \leq)$
  such that $x \leq y$ implies $s(y, a) \subseteq s(x, a)$
  for all $a \in \mathcal{up}(X, \leq)$.
  (Here $\mathcal{up}(X, \leq)$ is the collection of upsets of $(X, \leq)$.)
  
  While classically any subset of a frame may serve as the
  interpretation of a formula, while intuitionistically
  only upward closed subsets are used as the interpretations of formulas. 
  Guided by this observation, we define frame semantics for sub-sub-intuitionistic
  logic by extending L-frames $(X, \ftop, \fmeet)$ with a selection function
  of the form
  \begin{equation*}
    s : X \times \Filt(X, \ftop, \fmeet) \to \Filt(X, \ftop, \fmeet).
  \end{equation*}
  In order to ensure that all formulas are interpreted as filters,
  we impose three coherence conditions between the semilattice
  structure of the frame and the selection function.
  Thus, we arrive at the following definition.

\begin{definition}
  \label{def:selectionfunctionlframe}
  Let $(X, \ftop, \fmeet)$ be an L-frame.
  A \emph{selection function} is a function
  $s : X \times \Filt(X, \ftop, \fmeet) \to \Filt(X, \ftop, \fmeet)$ such that
  for all $x, y, z \in X$ and $a \in \Filt(X, \ftop, \fmeet)$:
  \begin{enumerate} 
    \renewcommand{\labelenumi}{(\theenumi) }
    \renewcommand{\theenumi}{S$_{\arabic{enumi}}$}
    \item \label{it:sf-top}
          $s(\ftop, a) = \{ 1 \}$;
    \item \label{it:sf-up}
          If $x \fleq y$ then $s(y, a) \subseteq s(x, a)$;
    \item \label{it:sf-filt}
          If $z \in s(x \fmeet y, a)$ then there exist
          $u \in s(x, a)$ and $v \in s(y, a)$ such that $u \fmeet v \fleq z$.
  \end{enumerate}
  A \emph{selection L-frame} is an L-frame with a selection function.
\end{definition}

\begin{definition}\label{def:sf-model}
  A \emph{selection L-model} is a selection L-frame
  $(X, \ftop, \fmeet, s)$ with a valuation $V : \text{Prop} \to \mathcal{F}(X,\ftop,\fmeet)$.
  The interpretation of a $\mathbf{CL}$-formula at a world $x \in X$
  is defined recursively by the clauses from Definition~\ref{def:L-model}
  with the additional case:
  \begin{equation*}
    \mo{M}, x \Vdash \phi \cto \psi
      \iff s(x, \llb \phi \rrb^{\mo{M}}) \subseteq \llb \psi \rrb^{\mo{M}}.
  \end{equation*}
  Recall that $\llb \phi \rrb^{\mo{M}} = \{ x \in X \mid \mo{M}, x \Vdash \phi \}$
  denotes the \emph{truth set} of $\phi$ in $\mo{M}$.
  When $\mo{M}$ is clear from context we drop the superscript.

  We write $\mathfrak{M} \Vdash \phi \cp \psi$, and say that $\mo{M}$ \emph{validates}
  $\phi \cp \psi$, if $\llb \phi \rrb^{\mo{M}} \subseteq \llb \psi \rrb^{\mo{M}}$.
  Similarly, a selection L-frame $\mo{X}$ \emph{validates} $\phi \cp \psi$,
  denoted by $\mo{X} \Vdash \phi \cp \psi$, if every model of the form
  $(\mo{X}, V)$ validates $\phi \cp \psi$.
  Finally, if $\Gamma$ is a set of consequence pairs, then we write
  $\mo{X} \Vdash \Gamma$ if $\mo{X}$ validates every consequence pair in $\Gamma$,
  and $\phi \Vdash_{\Gamma} \psi$ if $\mo{X} \Vdash \Gamma$ implies
  $\mo{X} \Vdash \phi \cp \psi$ for all selection L-frames $\mo{X}$.
\end{definition}

  We verify that selection L-models do indeed satisfy the
  persistence condition. In other words, we show that the interpretation
  of any formula in a selection function L-model forms a filter.
  We make use of the following lemma, which can be proven using
  the three selection function conditions from Definition~\ref{def:sf-model}.

\begin{lemma}\label{lem:sf-genfil}
  Let $\mo{X} = (X, \ftop, \fmeet, s)$ be a selection L-frame.
  Then for every filter $a$ of $(X, \ftop, \fmeet)$ and all $x, y \in X$ we have
  $s(x \fmeet y, a) = s(x, a) \genFil s(y, a)$.
\end{lemma}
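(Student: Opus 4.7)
The plan is to prove the two inclusions separately, using the three selection function axioms from Definition~\ref{def:selectionfunctionlframe} together with the definition of $\genFil$ as the upward closure of pairwise meets.

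For the inclusion $s(x \fmeet y, a) \subseteq s(x, a) \genFil s(y, a)$, I would take an arbitrary $z \in s(x \fmeet y, a)$ and apply axiom \axref{it:sf-filt} directly: it supplies $u \in s(x, a)$ and $v \in s(y, a)$ with $u \fmeet v \fleq z$. Since $z$ lies above the meet of an element of $s(x,a)$ and an element of $s(y,a)$, by the definition of $\genFil$ we get $z \in s(x, a) \genFil s(y, a)$.

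For the converse inclusion $s(x, a) \genFil s(y, a) \subseteq s(x \fmeet y, a)$, I would take $z$ in the join, so that $u \fmeet v \fleq z$ for some $u \in s(x, a)$ and $v \in s(y, a)$. Since $x \fmeet y \fleq x$ and $x \fmeet y \fleq y$, axiom \axref{it:sf-up} gives $s(x, a) \subseteq s(x \fmeet y, a)$ and $s(y, a) \subseteq s(x \fmeet y, a)$, so $u, v \in s(x \fmeet y, a)$. As $s(x \fmeet y, a)$ is a filter, it is closed under finite meets and upward closure, hence $u \fmeet v \in s(x \fmeet y, a)$ and therefore $z \in s(x \fmeet y, a)$.

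I do not anticipate a real obstacle here: the proof is essentially a bookkeeping exercise that repackages the three axioms into the definition of $\genFil$. The only thing to watch is that axiom \axref{it:sf-top} is not needed — the content of the lemma is carried entirely by monotonicity (\axref{it:sf-up}) and the factoring condition (\axref{it:sf-filt}), together with the fact that selection values are filters. If anything needs slight care, it is to note explicitly that $s(x \fmeet y, a)$ being a filter is exactly what lets the second inclusion go through, since $u \fmeet v$ need not itself be of the form required by \axref{it:sf-filt} for some triple.
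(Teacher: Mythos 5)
Your proof is correct and is exactly the argument the paper intends (it states only that the lemma ``can be proven using the three selection function conditions'' and leaves the details out): \eqref{it:sf-filt} gives the inclusion $s(x \fmeet y, a) \subseteq s(x,a) \genFil s(y,a)$, while \eqref{it:sf-up} applied to $x \fmeet y \fleq x$ and $x \fmeet y \fleq y$, together with the fact that $s(x \fmeet y, a)$ is a filter, gives the converse. Your observation that \eqref{it:sf-top} plays no role here is also accurate.
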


\begin{proposition}\label{prop:persistence}
  Let $\mathfrak{M} = (X, \ftop, \fmeet, s, v)$ be a selection L-model.
  Then $\llbracket \phi \rrbracket$ is a filter on $(X, \ftop, \fmeet)$
  for every $\phi \in \mathbf{CL}$.
\end{proposition}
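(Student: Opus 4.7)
The proof proceeds by structural induction on $\phi \in \mathbf{CL}$. The base cases $\phi = p$, $\phi = \top$, $\phi = \bot$ are immediate: $V(p)$ is a filter by definition of a valuation, $\llb \top \rrb = X$ is the improper filter, and $\llb \bot \rrb = \{\ftop\}$ is the smallest filter. For the conjunction case, the induction hypothesis gives that $\llb \psi \rrb$ and $\llb \chi \rrb$ are filters, so their intersection $\llb \psi \wedge \chi \rrb$ is as well. For the disjunction case, the remark following Definition~\ref{def:L-model} identifies $\llb \psi \vee \chi \rrb$ with $\llb \psi \rrb \genFil \llb \chi \rrb$, which is a filter by construction.

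The interesting case is $\phi = \psi \cto \chi$. By the inductive hypothesis, $\llb \psi \rrb$ is a filter, so $s(x, \llb \psi \rrb)$ is well-defined for every $x$, and $\llb \chi \rrb$ is a filter too. I need to verify the three defining properties of a filter for the set $\llb \psi \cto \chi \rrb = \{x \in X \mid s(x, \llb \psi \rrb) \subseteq \llb \chi \rrb\}$, each of which corresponds to one of the coherence conditions on $s$.

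First, $\ftop \in \llb \psi \cto \chi \rrb$ follows from \axref{it:sf-top}: $s(\ftop, \llb \psi \rrb) = \{\ftop\}$, and $\ftop$ lies in every filter, hence in $\llb \chi \rrb$. Upward closure is immediate from \axref{it:sf-up}: if $x \in \llb \psi \cto \chi \rrb$ and $x \fleq y$, then $s(y, \llb \psi \rrb) \subseteq s(x, \llb \psi \rrb) \subseteq \llb \chi \rrb$. Closure under finite meets is the step that invokes Lemma~\ref{lem:sf-genfil}: if $x, y \in \llb \psi \cto \chi \rrb$, then
\begin{equation*}
  s(x \fmeet y, \llb \psi \rrb) = s(x, \llb \psi \rrb) \genFil s(y, \llb \psi \rrb),
\end{equation*}
and since both $s(x, \llb \psi \rrb)$ and $s(y, \llb \psi \rrb)$ are contained in the filter $\llb \chi \rrb$, their generated join is too (any element of $a \genFil b$ lies above some $u \fmeet v$ with $u \in a$, $v \in b$, so if $a, b \subseteq \llb \chi \rrb$ then $u \fmeet v \in \llb \chi \rrb$ and hence $\llb \chi \rrb$ contains the element by upward closure).

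The main obstacle is really just the meet-closure step for $\cto$, and Lemma~\ref{lem:sf-genfil} (which packages up \axref{it:sf-filt} together with \axref{it:sf-up} into the exact form needed) does essentially all of the work. Note that without this lemma one would have to unfold \axref{it:sf-filt} directly: given $z \in s(x \fmeet y, \llb \psi \rrb)$, condition \axref{it:sf-filt} produces $u \in s(x, \llb \psi \rrb)$ and $v \in s(y, \llb \psi \rrb)$ with $u \fmeet v \fleq z$, and then $u, v \in \llb \chi \rrb$ gives $u \fmeet v \in \llb \chi \rrb$, so $z \in \llb \chi \rrb$ by upward closure. Either formulation completes the induction.
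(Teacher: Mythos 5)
Your proposal is correct and follows essentially the same route as the paper: induction on $\phi$, deferring the propositional cases to the weak-positive-logic setting, handling $\ftop$-membership and upward closure of $\llb \psi \cto \chi \rrb$ via \eqref{it:sf-top} and \eqref{it:sf-up}, and obtaining meet-closure from Lemma~\ref{lem:sf-genfil} together with the fact that $\llb \chi \rrb$ is a filter. Your explicit unfolding of why $a \genFil b \subseteq \llb \chi \rrb$ when $a, b \subseteq \llb \chi \rrb$ is exactly the detail the paper leaves implicit.
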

\begin{proof}
  We proceed by induction on the complexity of $\phi$.
  The base cases and the inductive cases for conjunctions and disjunctions are as
  in~\cite[Lemma~3.8]{BezEA24}, so we focus on the case $\phi = \psi \cto \chi$.
  
  It follows from~\eqref{it:sf-top} and~\eqref{it:sf-up} that
  $\llb \psi \cto \chi \rrb$ contains $\ftop$ and is upwards closed.
  For closure under meets, suppose $x, y \in \llb \psi \cto \chi \rrb$.
  Then $s(x, \llb \psi \rrb) \subseteq \llb \chi \rrb$
  and $s(y, \llb \psi \rrb) \subseteq \llb \chi \rrb$.
  Lemma~\ref{lem:sf-genfil} and the fact that $\llb \chi \rrb$ is a
  filter then imply
  $s(x \fmeet y, \llb \psi \rrb) = s(x, \llb \psi \rrb) \genFil s(y, \llb \psi \rrb) \subseteq \llb \chi \rrb$.
  Therefore $x \fmeet y \in \llb \psi \to \chi \rrb$.
\end{proof}

  We close this subsection by showing soundness of $\log{CL}(\Gamma)$ with
  respect to selection L-frames validating $\Gamma$. Completeness will have to wait until
  we establish a categorical duality in Section~\ref{sec:duality}.
  
\begin{definition}
  The \emph{complex algebra} of a selection L-frame
  $\mo{X} = (X, \ftop, \fmeet, s)$ is the conditional lattice
  $\mo{X}^+ := (\Filt(X, \ftop, \fmeet), \dto)$,
  where $\Filt(X, \ftop, \fmeet)$ is the lattice of filters of $(X, \ftop, \fmeet)$
  and $\dto$ is given by
  \begin{equation*}
    p \dto q = \{ x \in X \mid s(x, p) \subseteq q\}.
  \end{equation*}
\end{definition}

\begin{lemma}
  For any selection L-frame $\mo{X} = (X, \ftop, \fmeet, s)$,
  its complex algebra $\mo{X}^+$ is a conditional lattice.
\end{lemma}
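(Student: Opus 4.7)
The plan is to check that $\mathfrak{X}^+$ satisfies the two parts of a conditional lattice structure: first, that its underlying reduct is a lattice, and second, that $\dto$ satisfies the two equations in \eqref{eq:cond-lattice}. The first part is immediate, since $\Filt(X,\ftop,\fmeet)$ was already established to be a lattice (with top $X$, bottom $\{\ftop\}$, meet $\cap$, and join $\genFil$) earlier in the excerpt. So the real content is (a) verifying that $\dto$ is well-defined, i.e.\ that $p \dto q$ is a filter whenever $p$ and $q$ are, and (b) checking the two equations themselves.

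For well-definedness, I would fix filters $p, q$ and show that $p \dto q = \{x \in X \mid s(x,p) \subseteq q\}$ is upward closed, contains $\ftop$, and is closed under $\fmeet$. The first of these uses \axref{it:sf-up}: if $x \in p \dto q$ and $x \fleq y$, then $s(y,p) \subseteq s(x,p) \subseteq q$. The second uses \axref{it:sf-top}: $s(\ftop, p) = \{\ftop\} \subseteq q$ since every filter contains $\ftop$. The third is where the work lives: given $x, y \in p \dto q$, Lemma~\ref{lem:sf-genfil} gives $s(x \fmeet y, p) = s(x,p) \genFil s(y,p)$, and since $q$ is a filter containing both $s(x,p)$ and $s(y,p)$, it contains their join in the lattice of filters, which is precisely $s(x,p) \genFil s(y,p)$.

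For the equations, I would observe that both reduce to set-theoretic triviality. The identity $X = p \dto X$ holds because $s(x,p) \subseteq X$ for every $x$. The equation $(p \dto q) \cap (p \dto r) = p \dto (q \cap r)$ follows from the elementary fact that $s(x,p) \subseteq q$ and $s(x,p) \subseteq r$ jointly hold if and only if $s(x,p) \subseteq q \cap r$.

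The main (and only mildly nontrivial) obstacle is the closure of $p \dto q$ under meets, which is precisely the point where Lemma~\ref{lem:sf-genfil} is used; everything else amounts to unwinding definitions. No single step should require more than a line once the lemma is invoked.
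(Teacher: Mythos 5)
Your proposal is correct and follows essentially the same route as the paper: the two equations of \eqref{eq:cond-lattice} are verified by the same elementary set-theoretic computations. The only difference is that you also explicitly check that $p \dto q$ is a filter (via \eqref{it:sf-top}, \eqref{it:sf-up} and Lemma~\ref{lem:sf-genfil}); the paper leaves this implicit in the lemma itself, since it is the same argument already carried out in the proof of Proposition~\ref{prop:persistence}, so including it is a harmless and arguably welcome addition rather than a divergence.
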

\begin{proof}
  We know that $\Filt(X, \ftop, \fmeet)$ forms a lattice (with top element $X$
  and conjunction $\cap$),
  so we only have to verify that $\dto$ satisfies the equations
  from~\eqref{eq:cond-lattice}. For the first equation, note that
  $a \dto X = \{ x \in X \mid s(x, a) \subseteq X \} = X$.
  For the second equation, let $a, b, c \in \Filt(X, \ftop, \fmeet)$ and compute
  \begin{align*}
    a \dto (b \cap c)
      &= \{ x \in X \mid s(x, a) \subseteq b \cap c \} \\
      &= \{ x \in X \mid s(x, a) \subseteq b \} \cap \{ x \in X \mid s(x, a) \subseteq c \}
       = (a \dto b) \cap (a \dto c).
  \end{align*}
  The proves the lemma.
\end{proof}

\begin{lemma}\label{lem:cpx-summary}
  Let $\mo{X}$ be a selection function L-frame, $V$ any valuation for $\mo{X}$
  and $\phi, \psi \in \mathbf{CL}$. Then:
  \begin{enumerate}
    \item $\llb \phi \rrb^{(\mo{X}, V)} = \llp \phi \rrp_{(\mo{X}^+, V)}$
    \item $(\mo{X}, V) \Vdash \phi \cp \psi$ iff $(\mo{X}^+, V) \cdash \phi \cp \psi$
    \item \label{it:cpx-validity}
          $\mo{X} \Vdash \phi \cp \psi$ iff $\mo{X}^+ \cdash \phi \cp \psi$
  \end{enumerate}
\end{lemma}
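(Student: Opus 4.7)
The plan is to prove (1) by induction on the complexity of $\phi$, and then derive (2) and (3) essentially by unfolding definitions.

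For part~(1), the base cases ($p$, $\top$, $\bot$) and the inductive cases for conjunction and disjunction are handled exactly as in~\cite[Lemma~3.8]{BezEA24}, since the interpretation clauses and the lattice structure of $\Filt(X, \ftop, \fmeet)$ (with top $X$, bottom $\{\ftop\}$, meet $\cap$ and join $\genFil$) agree with the clauses from Definition~\ref{def:L-model}. The only new inductive case is $\phi = \psi \cto \chi$. Here I would compute
\begin{align*}
  \llb \psi \cto \chi \rrb^{(\mo{X}, V)}
    &= \{ x \in X \mid s(x, \llb \psi \rrb^{(\mo{X}, V)}) \subseteq \llb \chi \rrb^{(\mo{X}, V)} \} \\
    &= \{ x \in X \mid s(x, \llp \psi \rrp_{(\mo{X}^+, V)}) \subseteq \llp \chi \rrp_{(\mo{X}^+, V)} \} \\
    &= \llp \psi \rrp_{(\mo{X}^+, V)} \dto \llp \chi \rrp_{(\mo{X}^+, V)}
     = \llp \psi \cto \chi \rrp_{(\mo{X}^+, V)},
\end{align*}
where the first equality uses the semantic clause from Definition~\ref{def:sf-model}, the second uses the induction hypothesis, and the third uses the definition of $\dto$ on $\mo{X}^+$.

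Part~(2) then follows by noting that $(\mo{X}, V) \Vdash \phi \cp \psi$ means $\llb \phi \rrb^{(\mo{X}, V)} \subseteq \llb \psi \rrb^{(\mo{X}, V)}$, which by (1) is equivalent to $\llp \phi \rrp_{(\mo{X}^+, V)} \subseteq \llp \psi \rrp_{(\mo{X}^+, V)}$; since the order on $\mo{X}^+$ is reverse inclusion of filters... wait, no, the order on $\Filt(X,\ftop,\fmeet)$ as described earlier is inclusion (meets are intersections), so this is exactly $\llp \phi \rrp_{(\mo{X}^+, V)} \leq \llp \psi \rrp_{(\mo{X}^+, V)}$, i.e.\ $(\mo{X}^+, V) \cdash \phi \cp \psi$.

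For part~(3), both $\mo{X} \Vdash \phi \cp \psi$ and $\mo{X}^+ \cdash \phi \cp \psi$ quantify universally over valuations, and any valuation $V : \Prop \to \Filt(X, \ftop, \fmeet)$ is simultaneously an L-model valuation and an assignment into $\mo{X}^+$. So (3) follows directly from (2) by universally quantifying over $V$. I do not anticipate a significant obstacle here; the only subtlety worth double-checking is that the lattice operations on $\Filt(X, \ftop, \fmeet)$ used to interpret $\wedge$ and $\vee$ in $\mo{X}^+$ indeed match the truth-set operations from Definition~\ref{def:L-model}, which was already established for weak positive logic in~\cite{BezEA24}.
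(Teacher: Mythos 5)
Your proposal is correct and follows the same route as the paper: part (1) by induction on $\phi$ (with the only new case being $\cto$, handled via the definition of $\dto$), part (2) from (1) plus the fact that $\mo{X}^+$ is ordered by inclusion, and part (3) from (2) plus the bijection between valuations and assignments. The paper simply states these steps without writing out the induction, so your version is just a more explicit rendering of the same argument.
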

\begin{proof}
  The first item follows from a routine induction on the structure of $\phi$.
  The second item follows from the first and the fact that $\mo{X}^+$ is ordered
  by inclusion. The third item follows from the second and the fact that
  valuations for $\mo{X}$ correspond bijectively with assignments for $\mo{X}^+$.
\end{proof}

  Finally, we can use Lemma~\ref{lem:cpx-summary} to prove soundness
  of $\log{CL}(\Gamma)$ with respect to classes of selection function L-frames.

\begin{theorem}\label{thrm:framesound}
  Let $\Gamma \cup \{ \phi \cp \psi \}$ be a collection of consequence pairs.
  Then
  \begin{equation*}
    \phi \vdash_{\Gamma} \psi
    \quad\text{implies}\quad
    \phi \Vdash_{\Gamma} \psi.
  \end{equation*}
\end{theorem}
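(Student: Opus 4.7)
The plan is to reduce soundness for selection L-frames to algebraic soundness (Theorem~\ref{thm:alg-sound-compl}) by passing through the complex algebra, using Lemma~\ref{lem:cpx-summary} as the bridge between frames and algebras.

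Concretely, I would fix an arbitrary selection L-frame $\mo{X} = (X, \ftop, \fmeet, s)$ with $\mo{X} \Vdash \Gamma$ and show $\mo{X} \Vdash \phi \cp \psi$. By Lemma~\ref{lem:cpx-summary}\eqref{it:cpx-validity}, the assumption $\mo{X} \Vdash \Gamma$ is equivalent to $\mo{X}^+ \cdash \gamma \cp \delta$ for every $\gamma \cp \delta \in \Gamma$. Combined with the preceding lemma establishing that $\mo{X}^+$ is a conditional lattice, this places $\mo{X}^+$ in the subcategory $\cat{CLat}(\Gamma)$. Now Theorem~\ref{thm:alg-sound-compl} gives the implication $\phi \vdash_\Gamma \psi \Rightarrow \phi \cdash[\Gamma] \psi$, so the hypothesis $\phi \vdash_\Gamma \psi$ forces $\mo{X}^+ \cdash \phi \cp \psi$. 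Applying Lemma~\ref{lem:cpx-summary}\eqref{it:cpx-validity} once more in the reverse direction yields $\mo{X} \Vdash \phi \cp \psi$, as required.

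There is no real obstacle here: the whole argument is a routine ``transfer along the complex algebra'' once the infrastructure of Lemma~\ref{lem:cpx-summary} is in place. The only thing worth emphasising is the role of item~\eqref{it:cpx-validity} of that lemma, which collapses validity on a frame into validity on its complex algebra, and the fact that it is used symmetrically: once to lift the premise $\mo{X} \Vdash \Gamma$ to the algebraic side, and once to push the conclusion back to the frame side. Everything else is a bookkeeping appeal to the definition of $\cat{CLat}(\Gamma)$ and to algebraic soundness.
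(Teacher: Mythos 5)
Your proof is correct and follows essentially the same route as the paper: both arguments transfer the problem to the complex algebra via Lemma~\ref{lem:cpx-summary}\eqref{it:cpx-validity} and then invoke the algebraic soundness half of Theorem~\ref{thm:alg-sound-compl}. Your version merely spells out the intermediate step that $\mo{X}^+ \in \cat{CLat}(\Gamma)$, which the paper leaves implicit.
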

\begin{proof}
  It follows from Lemma~\ref{lem:cpx-summary}\eqref{it:cpx-validity}
  that $\phi \cdash[\Gamma] \psi$ implies $\phi \Vdash_{\Gamma} \psi$.
  Combining this with Theorem~\ref{thm:alg-sound-compl} yields the
  desired result.
\end{proof}

\subsection{General frames}

  In modal logic, general frames are obtained by equipping frames with a
  collection of ``admissible'' subsets. That it, it adds a collection of
  designated subsets which are allows to be interpretations of formulas.
  They are often used to bridge the gap between frame semantics and
  algebraic semantics of a logic, see for example~\cite[Section~1.4]{BlaRijVen01}.
  
  In our setting, general frames generalise selection L-frames from
  Definition~\ref{def:selectionfunctionlframe} as well as the topologised
  frames used in the next section.
  Therefore, they allow us to define a notion of morphism between frames
  once, and specialise it the the setting of selection L-frames
  as well as the spaces used for the duality.
  Additionally, the use of general frames lets us give frame correspondents
  of formulas for both selection L-frames and topologised frames at once,
  preventing unnecessary duplication.

\begin{definition}
  A \emph{general selection L-frame}, or \emph{general frame} for short,
  is a tuple $\mo{G} = (X, \ftop, \fmeet, s, A)$ consisting of
  \begin{itemize}
    \item an L-frame $(X, \ftop, \fmeet)$;
    \item a \emph{selection function} $s : X \times A \to \Filt(X, \ftop, \fmeet)$
          satisfying~\eqref{it:sf-top}, \eqref{it:sf-up} and~\eqref{it:sf-filt}
          for all $a \in A$;
    \item a collection $A \subseteq \Filt(X, \ftop, \fmeet)$ of
          \emph{admissible subsets} that contain $X$ and $\{ \ftop \}$,
          and is closed under $\cap, \genFil$ and $\dto$.
  \end{itemize}
  An \emph{admissible valuation} is a map $V : \Prop \to A$,
  and a general frame together with an admissible valuation is called
  a \emph{general selection L-model} or \emph{general model}.

  The interpretation of formulas in a general model are defined using the
  clauses from Definitions~\ref{def:L-model} and~\ref{def:sf-model}.
  Validity of a consequence pair $\phi \cp \psi$ in a general frame or model
  is defined as in Definition~\ref{def:sf-model}.
  If $\Gamma \cup \{ \phi \cp \psi \}$ is a collection of consequence pairs,
  then we write $\phi \Vdash^g_{\Gamma} \psi$ if
  every general frame validating $\Gamma$ also validates $\phi \cp \psi$.
\end{definition}

  Let $\mo{G} = (X, \ftop, \fmeet, s, A)$ be a general frame
  and $\mo{M} = (\mo{G}, V)$ a general model.
  Then the closure conditions on $A$ ensure that $\llb \phi \rrb^{\mo{M}} \in A$
  for all $\phi \in \mathbf{CL}$.
  In fact, the set $A$ forms a conditional lattice.

\begin{definition}\label{def:gfrm-cpx}
  The \emph{complex algebra} of a general frame $\mo{G} = (X, \ftop, \fmeet, s, A)$
  is given by $\mo{G}^+ := (A, X, \{ \ftop \}, \cap, \genFil, \dto)$.
\end{definition}

  Every selection L-frame $\mo{X} = (X, \ftop, \fmeet, s)$ can be
  viewed as a general frame by letting all filters be admissible, that is,
  by setting $A = \Filt(X, \ftop, \fmeet)$. Then any valuation for $\mo{X}$
  is admissible, and validity of consequence pairs for $\mo{X}$
  and its corresponding general frame coincide.
  General frames with such a trivial set of admissible subsets are sometimes
  called \emph{full} (for example, see \cite[Section~2]{Seg89}).
  When convenient, we view selection L-frames as full general
  frames.

  We can generalise Lemma~\ref{lem:cpx-summary} and
  Theorem~\ref{thrm:framesound} to the setting of general frames.
  
\begin{lemma}\label{lem:gen-cpx-summary}
  Let $\mo{G}$ be a general frame, $V$ an admissible valuation for $\mo{G}$
  and $\phi, \psi \in \mathbf{CL}$. Then:
  \begin{enumerate}
    \item $\llb \phi \rrb^{(\mo{G}, V)} = \llp \phi \rrp_{(\mo{G}^+, V)}$
    \item $(\mo{G}, V) \Vdash \phi \cp \psi$ iff $(\mo{G}^+, V) \cdash \phi \cp \psi$
    \item \label{it:gen-cpx-validity}
          $\mo{G} \Vdash \phi \cp \psi$ iff $\mo{G}^+ \cdash \phi \cp \psi$
  \end{enumerate}
\end{lemma}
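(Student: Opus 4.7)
The plan is to follow verbatim the argument for Lemma~\ref{lem:cpx-summary}, with one extra bookkeeping check threaded through the induction: at each step I must confirm that the truth set being computed lies in $A$, so that the selection function remains applicable. This is exactly what the closure conditions built into the definition of a general frame are designed to guarantee.

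For item~(1) I would proceed by induction on the complexity of $\phi \in \mathbf{CL}$. The base cases ($p$, $\top$, $\bot$) and the inductive cases for $\wedge$ and $\vee$ are handled exactly as in~\cite[Lemma~3.8]{BezEA24} and in the proof of Proposition~\ref{prop:persistence}. The only new detail is that, because $V$ is admissible and $A$ is closed under $\cap$ and $\genFil$, each intermediate truth set sits inside $A$, so the set-theoretic computation coincides with the corresponding algebraic operation in $\mo{G}^+$. For the step $\phi = \psi \cto \chi$ the induction hypothesis gives $\llb \psi \rrb^{(\mo{G},V)} = \llp \psi \rrp_{(\mo{G}^+,V)} \in A$ and similarly for $\chi$; this admissibility is the essential point, since $s(x,-)$ in a general frame is only defined on admissible arguments. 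Unfolding the semantic clause from Definition~\ref{def:sf-model} and the definition of $\dto$ then makes the two sides coincide, and closure of $A$ under $\dto$ keeps the resulting filter admissible.

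Items~(2) and~(3) are formal consequences. For~(2), the order underlying $\mo{G}^+$ is inclusion, so $\llp \phi \rrp_{(\mo{G}^+,V)} \leq \llp \psi \rrp_{(\mo{G}^+,V)}$ literally means $\llb \phi \rrb^{(\mo{G},V)} \subseteq \llb \psi \rrb^{(\mo{G},V)}$, and the equivalence drops out of~(1). For~(3), admissible valuations $V : \Prop \to A$ for $\mo{G}$ are in bijection with assignments $\sigma : \Prop \to A$ for $\mo{G}^+$, so quantifying over one is the same as quantifying over the other. The only point worth flagging as a potential obstacle is the $\cto$ case of~(1): without $A$'s closure under $\dto$, the inductive hypothesis would not even yield a well-defined element of $\mo{G}^+$ to compare against, but once that closure is in place the entire argument is a routine transcription of the proof of Lemma~\ref{lem:cpx-summary}.
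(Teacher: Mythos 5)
Your proof is correct and takes essentially the same route as the paper, which proves the lemma by the same induction used for Lemma~\ref{lem:cpx-summary} (base cases and $\wedge,\vee$ as in~\cite[Lemma~3.8]{BezEA24}, the $\cto$ case by unfolding $\dto$), with items~(2) and~(3) following formally from~(1). Your explicit check that each truth set stays in $A$ — so that $s(x,-)$ and $\dto$ remain applicable — is exactly the point that makes the generalisation from full frames to general frames go through.
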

  
\begin{theorem}\label{thm:gen-frm-sound}
  Let $\Gamma \cup \{ \phi \cp \psi \}$ be a collection of consequence pairs.
  Then
  \begin{equation*}
    \phi \vdash_{\Gamma} \psi
    \quad\text{implies}\quad
    \phi \Vdash^g_{\Gamma} \psi.
  \end{equation*}
\end{theorem}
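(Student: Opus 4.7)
The plan is to mirror the proof of Theorem~\ref{thrm:framesound} essentially verbatim, but using Lemma~\ref{lem:gen-cpx-summary} in place of Lemma~\ref{lem:cpx-summary}. Concretely, I assume $\phi \vdash_\Gamma \psi$ and fix an arbitrary general frame $\mo{G} = (X, \ftop, \fmeet, s, A)$ with $\mo{G} \Vdash \Gamma$, aiming to deduce $\mo{G} \Vdash \phi \cp \psi$.

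First I would note that the closure conditions on $A$ ensure that the tuple $\mo{G}^+ = (A, X, \{ \ftop \}, \cap, \genFil, \dto)$ of Definition~\ref{def:gfrm-cpx} is actually a conditional lattice, so that $\mo{G}^+$ is a bona fide object of $\cat{CLat}$. Indeed, $A$ contains $X$ and $\{\ftop\}$ and is closed under $\cap$, $\genFil$ and $\dto$, so it inherits its lattice structure from $\Filt(X, \ftop, \fmeet)$, and the two defining equations~\eqref{eq:cond-lattice} for $\dto$ hold in $\mo{G}^+$ because they hold in the full complex algebra of the underlying selection L-frame (as in the proof of the previous lemma that $\mo{X}^+$ is a conditional lattice).

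Next, by Lemma~\ref{lem:gen-cpx-summary}\eqref{it:gen-cpx-validity} applied pairwise to $\Gamma$, the hypothesis $\mo{G} \Vdash \Gamma$ translates into $\mo{G}^+ \cdash \Gamma$, i.e.\ $\mo{G}^+$ is an object of $\cat{CLat}(\Gamma)$. From $\phi \vdash_\Gamma \psi$ and the soundness half of Theorem~\ref{thm:alg-sound-compl}, we get $\phi \cdash[\Gamma] \psi$, so in particular $\mo{G}^+ \cdash \phi \cp \psi$. A second appeal to Lemma~\ref{lem:gen-cpx-summary}\eqref{it:gen-cpx-validity} then converts this back into $\mo{G} \Vdash \phi \cp \psi$, which completes the argument since $\mo{G}$ was arbitrary.

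The only conceptual step is confirming that $\mo{G}^+$ lies in $\cat{CLat}$, but this is forced by the closure axioms in the definition of a general frame and needs no genuine computation; the bulk of the work has already been absorbed into Lemma~\ref{lem:gen-cpx-summary}. Consequently I expect no serious obstacle: this theorem is essentially a corollary of that lemma together with algebraic soundness.
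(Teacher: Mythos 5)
Your argument is correct and is exactly the route the paper intends: the paper leaves Theorem~\ref{thm:gen-frm-sound} without an explicit proof precisely because it is the verbatim analogue of the proof of Theorem~\ref{thrm:framesound}, combining Lemma~\ref{lem:gen-cpx-summary}\eqref{it:gen-cpx-validity} with the soundness direction of Theorem~\ref{thm:alg-sound-compl}. Your additional check that $\mo{G}^+$ is a genuine conditional lattice is a worthwhile detail that the paper only asserts in passing before Definition~\ref{def:gfrm-cpx}.
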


\subsection{Selection morphisms}

  In order to prove a categorical duality for varieties of conditional lattices,
  we need to think about morphisms between frames. In the classical setting,
  morphisms between selection function frames were defined implicitly
  in~\cite{KupPat11}.
  Here, we define morphisms between
  general frames, so that their definition restricts to a notion of morphism
  between selection L-frames by taking all filters to be admissible,
  as well as the topologised frames used in Section~\ref{sec:duality}.

\begin{definition}\label{def:smor}
  Let $\mo{G} = (X, \ftop, \fmeet, s, A)$ and $\mo{G}' = (X', \ftop', \fmeet', s', A')$
  be two general frames.
  Then a \emph{selection morphism} from $\mo{G}$ to $\mo{G}'$
  is an L-morphism $f$ from $(X, \ftop, \fmeet)$ to $(X', \ftop', \fmeet')$
  such that, for all $x \in X$ and $a' \in A'$ the following hold:
  \begin{enumerate} 
    \renewcommand{\labelenumi}{(\theenumi) }
    \renewcommand{\theenumi}{M$_{\arabic{enumi}}$}
    \setcounter{enumi}{-1}
    \item \label{it:smor-0}
          $f^{-1}(a') \in A$;
    \item \label{it:smor-1}
          If $y \in s(x, f^{-1}(a'))$ then $f(y) \in s'(f(x), a)$;
    \item \label{it:smor-2}
          If $y' \in s'(f(x), a')$ then there exists an $y \in s(x, f^{-1}(a'))$
          such that $f(y) \leq y'$.
  \end{enumerate}
\end{definition}
  
  Conditions~\eqref{it:smor-1} and~\eqref{it:smor-2} can be combined to the
  equivalent condition
  \begin{equation}\label{eq:mor-alt}
    s'(f(x), a') = \ua f[s(x, f^{-1}(a'))].
  \end{equation}
  However, the separation into two conditions will simplify the proof
  that selection morphisms preserve and reflect truth of the conditional
  implication.

\begin{definition}
  Let $\mo{M} = (\mo{G}, V)$
  and $\mo{M}' = (\mo{G}', V')$
  be two general models.
  Then a \emph{selection model morphism} from $\mo{M}$ to $\mo{M}$
  is a selection morphism $f : \mo{G} \to \mo{G}'$
  such that $V = f^{-1} \circ V'$.
\end{definition}

  Selection morphisms specialise to selection L-frames and models
  by viewing every filter as admissible. Thus, they are given by L-morphisms
  that satisfy~\eqref{it:smor-1} and~\eqref{it:smor-2}
  where $a'$ ranges over all filters of the codomain.

  Every selection morphism between two (general) frames gives rise to a
  conditional lattice homomorphism between the corresponding complex algebras.
  
\begin{lemma}\label{lem:smor-dual}
  Let $\mo{G} = (X, \ftop, \fmeet, s, A)$ and
  $\mo{G}' = (X', \ftop', \fmeet', s', A')$ be two general frames
  and $f : \mo{G} \to \mo{G}'$ a selection morphism.
  Then $f^{-1} : (\mo{G}')^+ \to \mo{G}^+$ is a conditional lattice homomorphism.
\end{lemma}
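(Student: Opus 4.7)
The plan is to verify that $f^{-1}$ is well-defined as a map $A' \to A$ and that it preserves each of the operations $X$, $\{\ftop\}$, $\cap$, $\genFil$ and $\dto$. Well-definedness is immediate from condition~\eqref{it:smor-0}. Preservation of the lattice structure on $A'$ (top, bottom, meet, and join) rests entirely on $f$ being an L-morphism in the sense of Definition~\ref{def:lmorphisms} and is essentially the content of the dual direction of Theorem~\ref{thrm:weakduality} applied pointwise: $f^{-1}(X') = X$ is trivial, $f^{-1}(\{\ftop'\}) = \{\ftop\}$ uses the biconditional $f(x) = \ftop'$ iff $x = \ftop$, preservation of $\cap$ is a set-theoretic triviality, and preservation of $\genFil$ is handled by the zig-zag clause of Definition~\ref{def:lmorphisms}. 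So the only new work is to check that $f^{-1}$ commutes with $\dto$.

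The core of the proof, and the only place where the selection-function conditions~\eqref{it:smor-1} and~\eqref{it:smor-2} enter, is the identity $f^{-1}(a' \dto b') = f^{-1}(a') \dto f^{-1}(b')$ for all $a', b' \in A'$. Unfolding the definition of $\dto$ on both complex algebras gives
\begin{equation*}
  x \in f^{-1}(a' \dto b') \iff s'(f(x), a') \subseteq b',
  \qquad
  x \in f^{-1}(a') \dto f^{-1}(b') \iff s(x, f^{-1}(a')) \subseteq f^{-1}(b').
\end{equation*}
For the forward inclusion I take an arbitrary $y \in s(x, f^{-1}(a'))$ and apply~\eqref{it:smor-1} to obtain $f(y) \in s'(f(x), a') \subseteq b'$, so $y \in f^{-1}(b')$. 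For the reverse inclusion I take any $y' \in s'(f(x), a')$ and use~\eqref{it:smor-2} to produce a $y \in s(x, f^{-1}(a'))$ with $f(y) \fleq' y'$; then $y \in f^{-1}(b')$ yields $f(y) \in b'$, and upward closure of the filter $b'$ gives $y' \in b'$.

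I do not anticipate any real obstacle: conditions~\eqref{it:smor-1} and~\eqref{it:smor-2} are tailor-made for exactly these two inclusions, which is the very reason the definition was split into two clauses as noted after Definition~\ref{def:smor}. The only subtle point is that the reverse direction depends on $b'$ being upward closed, which is automatic from $A' \subseteq \Filt(X', \ftop', \fmeet')$.
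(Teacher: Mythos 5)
Your proposal is correct and matches the paper's proof in all essentials: the lattice part is delegated to the L-morphism structure, and the identity $f^{-1}(a' \dto b') = f^{-1}(a') \dto f^{-1}(b')$ is established using \eqref{it:smor-1} for one inclusion and \eqref{it:smor-2} plus upward closure of $b'$ for the other (the paper packages these two steps as the single equivalent condition~\eqref{eq:mor-alt}, but the argument is the same).
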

\begin{proof}
  It follows from the definition of an L-morphism that $f^{-1}$ is a lattice
  homomorphism, so we only have to verify that
  $f^{-1}(a' \dto b') = f^{-1}(a') \dto f^{-1}(b')$ for all
  $a', b' \in A'$.
  Using~\eqref{eq:mor-alt} and the fact that $b'$ is upward closed
  we find
  \begin{alignat*}{2}
    x \in f^{-1}(a' \dto b')
      &\iff s'(f(x), a') \subseteq b'
      &&\iff f[s(x, f^{-1}(a'))] \subseteq b' \\
      &\iff s(x, f^{-1}(a')) \subseteq f^{-1}(b')
      &&\iff x \in f^{-1}(a') \dto f^{-1}(b'),
  \end{alignat*}
  which proves the desired equality.
\end{proof}

  We can combine Lemmas~\ref{lem:gen-cpx-summary} and~\ref{lem:smor-dual}
  to show that selection model morphisms preserve and reflect truth of
  formulas.

\begin{theorem}\label{thrm:prv}
  Let $\mo{G} = (X, \ftop, \fmeet, s, A)$ and $\mo{G}' = (X', \ftop', \fmeet', s', A')$
  be two general frames, $V$ and $V'$ two admissible valuations for them,
  and $\mo{M} = (\mo{G}, V)$ and $\mo{M}' = (\mo{G}', V')$ two general models.
  If $f : \mathfrak{M} \to \mathfrak{M}'$ is a selection model morphism,
  then for all $x \in X$ and $\phi \in \mathbf{CL}$ we have
  \begin{equation*}
    \mo{M}, x \Vdash \phi \iff \mo{M}', f(x) \Vdash \phi.
  \end{equation*}
\end{theorem}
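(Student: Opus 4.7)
The plan is to prove the theorem by induction on the complexity of $\phi$. The base case for proposition letters follows directly from $V = f^{-1} \circ V'$, while the inductive steps for $\top$, $\bot$, $\wedge$, and $\vee$ use only that $f$ is an L-morphism between the underlying L-frames, and therefore go through exactly as in the analogous result for L-models in~\cite{BezEA24}. All of the new content of the theorem lives in the case $\phi = \psi \cto \chi$, which I will handle using the three conditions~\eqref{it:smor-0}--\eqref{it:smor-2} of a selection morphism.

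For the conditional case, the induction hypothesis gives $\llb \psi \rrb^{\mo{M}} = f^{-1}(\llb \psi \rrb^{\mo{M}'})$ and $\llb \chi \rrb^{\mo{M}} = f^{-1}(\llb \chi \rrb^{\mo{M}'})$. Condition~\eqref{it:smor-0} places $f^{-1}(\llb \psi \rrb^{\mo{M}'})$ in $A$, so that $s(x, f^{-1}(\llb \psi \rrb^{\mo{M}'}))$ is defined. The goal then reduces to the equivalence
\[
    s'(f(x), \llb \psi \rrb^{\mo{M}'}) \subseteq \llb \chi \rrb^{\mo{M}'}
    \iff
    s(x, f^{-1}(\llb \psi \rrb^{\mo{M}'})) \subseteq f^{-1}(\llb \chi \rrb^{\mo{M}'}).
\]
The forward direction uses the ``forth'' condition~\eqref{it:smor-1}: any $y \in s(x, f^{-1}(\llb \psi \rrb^{\mo{M}'}))$ is mapped by $f$ into $s'(f(x), \llb \psi \rrb^{\mo{M}'}) \subseteq \llb \chi \rrb^{\mo{M}'}$, so $y \in f^{-1}(\llb \chi \rrb^{\mo{M}'})$. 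The backward direction uses the ``back'' condition~\eqref{it:smor-2} together with upward closure of the filter $\llb \chi \rrb^{\mo{M}'}$: given $y' \in s'(f(x), \llb \psi \rrb^{\mo{M}'})$, pick $y \in s(x, f^{-1}(\llb \psi \rrb^{\mo{M}'}))$ with $f(y) \fleq' y'$; by hypothesis $f(y) \in \llb \chi \rrb^{\mo{M}'}$, and upward closure yields $y' \in \llb \chi \rrb^{\mo{M}'}$.

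There is no serious obstacle in this argument; the only subtlety is that the two directions of the equivalence correspond exactly to the two halves of~\eqref{eq:mor-alt}, which is precisely why the definition of a selection morphism was split into~\eqref{it:smor-1} and~\eqref{it:smor-2}. A more algebraic alternative would combine Lemmas~\ref{lem:gen-cpx-summary} and~\ref{lem:smor-dual}: since $f^{-1}$ is a conditional lattice homomorphism and $V = f^{-1} \circ V'$, a short induction on $\phi$ at the algebraic level yields $\llp \phi \rrp_{(\mo{G}^+, V)} = f^{-1}\bigl(\llp \phi \rrp_{((\mo{G}')^+, V')}\bigr)$, whence Lemma~\ref{lem:gen-cpx-summary}(1) delivers the theorem immediately. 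I would nonetheless present the direct inductive proof, since it makes the role of the selection-morphism conditions most transparent and mirrors how the analogous conditional case is verified in the classical and intuitionistic settings.
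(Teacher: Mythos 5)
Your proof is correct, but your primary argument takes a different route from the paper's: the paper proves this theorem in exactly the way you relegate to a remark at the end, namely by composing Lemma~\ref{lem:smor-dual} (which says $f^{-1} : (\mo{G}')^+ \to \mo{G}^+$ is a conditional lattice homomorphism) with the observation that $V = f^{-1} \circ V'$ forces $\llp \cdot \rrp_{(\mo{G}^+, V)} = f^{-1} \circ \llp \cdot \rrp_{((\mo{G}')^+, V')}$, and then invoking Lemma~\ref{lem:gen-cpx-summary} to translate back to truth sets. Your direct induction is sound; its conditional case is in substance a re-derivation of the computation already carried out in the proof of Lemma~\ref{lem:smor-dual}, where the same forth/back split between~\eqref{it:smor-1} and~\eqref{it:smor-2} and the same appeal to upward closure of the target filter appear. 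What your version buys is transparency about where each selection-morphism condition is used; what the paper's version buys is brevity and no duplication of that computation. One small point worth making explicit in your write-up: conditions~\eqref{it:smor-0}--\eqref{it:smor-2} are quantified over $a' \in A'$, so before applying them to $\llb \psi \rrb^{\mo{M}'}$ you should note that truth sets of formulas in a general model are admissible (this is guaranteed by the closure conditions on $A'$ and the admissibility of $V'$, as the paper observes just before Definition~\ref{def:gfrm-cpx}); this is also what licenses forming $s(x, f^{-1}(\llb \psi \rrb^{\mo{M}'}))$ at all.
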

\begin{proof}
  It follows from Lemma~\ref{lem:smor-dual} that
  $f^{-1} : (\mo{G}')^+ \to \mo{G}^+$ is a conditional lattice homomorphism.
  The fact that $V = f^{-1} \circ V'$ 
  then implies that $\llp \cdot \rrp_{(\mo{G}^+, V)} = f^{-1} \circ \llp \cdot \rrp_{((\mo{G}')^+, V')}$.
  Combining this with Lemma~\ref{lem:gen-cpx-summary} proves the theorem.
\end{proof}

\section{Duality and completeness}\label{sec:duality}

  In this section we derive a duality between the category of conditional
  lattices and a category of certain topologised selection L-frames.
  We build this on the duality for lattices from Theorem~\ref{thrm:weakduality}.
  Adding the conditional implication to lattices is dually reflected by
  equipping L-spaces with a selection function that acts on clopen filters.
  We will see that it can be viewed as a general frame, where the clopen
  filters are the admissible filters. This allows us to use the notion of
  a selection morphism from Definition~\ref{def:smor} to define a suitable
  notion of morphism between our spaces.
  In Section~\ref{sec:scf}, we use the duality to obtain
  completeness for the basic system $\log{CL}$ with respect to the class of
  all selection L-frames.

\subsection{Spaces and Duality}

  Our definition of topological semantics takes L-spaces from
  Definition~\ref{def:lspaces} and attaches a \emph{topological selection function}.
  Recall that we denote the lattice of clopen filters of an L-space
  $\topo{X}$ by $\Fclp(\topo{X})$.
  We denote the set of closed filters $\Fcls(\topo{X})$.

\begin{definition}\label{def:condlspace}
  Let $\mathbb{X} = (X, \ftop, \fmeet, \tau)$ be an L-space.
  A \emph{topological selection function} is a function
  $s : X \times  \Fclp(\topo{X}) \to \mathcal{F}_{\text{Closed}}(\topo{X})$
  such that for all $x, y, z \in X$ and $a, b \in \Fclp(\topo{X})$ we have:
  \begin{enumerate}
    \renewcommand{\labelenumi}{(\theenumi) }
    \renewcommand{\theenumi}{S$_{\arabic{enumi}}$}
    \item \label{it:S1d}
          $s(\ftop, a) = \{ 1 \}$;
    \item \label{it:S2d}
          If $x \fleq y$ then $s(y, a) \subseteq s(x, a)$;
    \item \label{it:S3d}
          If $z \in s(x \fmeet y, a)$ then there exist
          $u \in s(x, a)$ and $v \in s(y, a)$ such that $u \fmeet v \fleq z$.
    \renewcommand{\theenumi}{S$_t$}
    \item \label{it:St}
          If $a, b \in \Fclp(\topo{X})$ then
          $a \dto b := \{ x \in X \mid s(x, a) \subseteq b \} \in \Fclp(\topo{X})$.
  \end{enumerate}
  A \emph{selection L-space} is an L-space with a topological
  selection function.
\end{definition}

\begin{remark}
  Selection function L-spaces can be viewed as general frames in a canonical way.
  If $\topo{X} = (X, \ftop, \fmeet, s, \tau)$ is a selection L-space
  then the set $\Fclp(\topo{X})$ of clopen filters of $\topo{X}$ contains $X$
  and $\{ \ftop \}$ and is closed under
  $\cap$ and $\genFil$ because $\topo{X}$ is based on an L-space.
  Furthermore, $\Fclp(\topo{X})$ is closed under $\dto$ by definition.
  So $(X, \ftop, \fmeet, s, \Fclp(\topo{X}))$ is a general frame.
  We call general frames that arise in this way \emph{descriptive}.
  We will use this perspective of selection L-spaces as general
  frames to define morphisms (in Definition~\ref{def:sflsm} below),
  and to transport frame correspondence results from general frames to
  selection L-spaces in Section~\ref{sec:extensions}.
\end{remark}

A \emph{selection morphism} between two selection L-spaces
$\topo{X}$ and $\topo{X}'$ is a function $f$ that is a selection morphism
between the corresponding general frames. We work this out concretely:

\begin{definition}
  \label{def:sflsm}
  Let $\topo{X} = (X, \ftop, \fmeet, s, \tau)$ and 
  $\topo{X}' = (X', \ftop', \fmeet', s', \tau')$ be two selection L-spaces.
  A \emph{selection morphism} from $\topo{X}$ to $\topo{X}'$ is a continuous
  L-frame morphism $f$ such that for all $x \in X$ and all clopen filters
  $a'$ of $\topo{X}'$:
  \begin{enumerate}
    \renewcommand{\labelenumi}{(\theenumi) }
    \renewcommand{\theenumi}{M$_{\arabic{enumi}}$}
    \item \label{it:smor-1d}
          if $y \in s(x, f^{-1}(a'))$ then $f(y) \in s'(f(x), a)$;
    \item \label{it:smor-2d}
          if $y' \in s'(f(x), a')$ then there exists an $y \in s(x, f^{-1}(a'))$
          such that $f(y) \leq y'$.
  \end{enumerate}
  The collection of selection L-spaces and selection morphisms forms a
  category, which we denote by $\cat{SLSpace}$.
\end{definition}

\begin{definition}
  A \emph{selection L-space model} is a selection 
  L-space $\topo{X}$ with a valuation  $V : \Prop \to \Fclp(\topo{X})$.
  The interpretation of formulas in a selection function L-space model is 
  defined via the clauses from Definitions~\ref{def:L-model}
  and~\ref{def:sf-model}.
  A selection L-space model $\mathbb{M}$ \emph{validates} a consequence pair
  $\phi \cp \psi$ if $\lb\phi\rb^\mathbb{M}
  \subseteq \lb\psi\rb^\mathbb{M}$. A selection L-space $\mathbb{X}$
  validates a consequence pair $\phi \cp \psi$ if every selection function L-space model
  $\mathbb{M}$ of the form $(\mo{X}, V)$ validates $\phi \cp \psi$. We write $\mo{X} \Vdash \phi
  \cp \psi$ if $\mo{X}$ validates $\phi \cp \psi$.
\end{definition}

  We claim that the category $\cat{SLSpace}$ is dual to $\cat{CLat}$.
  To prove this, we extend the functors $\Fclp$ and $\Ftop$ which establish
  the duality between L-spaces and lattices to functors between
  $\cat{SLSpace}$ and $\cat{CLat}$.
  By abuse of notation, we denote these functors by $\Fclp$ and $\Ftop$ as well. 

  We have already seen that taking the complex algebra of a general frame
  yields a conditional lattice.
  Therefore the clopen filters of a selection L-space $\topo{X}$ form a conditional
  lattice, which we denote by $\Fclp(\topo{X})$.
  Furthermore, it follows from Lemma~\ref{lem:smor-dual} that
  for every selection morphism $f : \topo{X} \to \topo{X}'$,
  its inverse $f^{-1}$ is a conditional lattice homomorphism from $\Fclp(\topo{X}')$
  to $\Fclp(\topo{X})$.
  A routine verification shows that this gives rise to a contravariant functor
  \begin{equation*}
    \Fclp : \cat{SLSpace} \to \cat{CLat}.
  \end{equation*}
  For the converse, we extend $\Ftop$ as follows.
  
\begin{definition}\label{def:lattopo}
  Let $\amo{A} = (A, \top, \bot, \land, \lor, \cto)$ be a conditional lattice
  and let $(\Filt(\amo{A}), A, \cap, \tau_{\amo{A}})$ be the L-space dual to
  the lattice $(A, \top, \bot, \land, \lor)$.
  Then as a consequence of the duality for lattices (from Theorem~\ref{thrm:weakduality})
  we know that every clopen filter is of the form $\theta_{\amo{A}}(a)$ for some $a \in A$.
  Furthermore, define
  \begin{equation*}
    s_{\amo{A}} : \Filt(\amo{A}) \times \Fclp(\Filt(\amo{A}), A, \cap, \tau_{\amo{A}})
     \to \Fcls(\Filt(\amo{A}), A, \cap, \tau_{\amo{A}})
  \end{equation*}
  by
  \begin{equation*}
    s_{\amo{A}}(p, \theta_{\amo{A}}(a)) = \ua \{ b \in A \mid a \cto b \in p \}.
  \end{equation*}
  Then we let $\Ftop(\amo{A}) := (\Filt(\amo{A}), A, \cap, s_{\amo{A}}, \tau_{\amo{A}})$.
\end{definition}

  Note that $s_{\amo{A}}(p, \theta_{\amo{A}}(a))$ consists of all filters
  $q$ in $\Filt(\amo{A})$ such that $a \cto b \in p$ implies $b \in q$.
  Therefore we could have equivalently defined
  \begin{equation*}
    s_{\amo{A}}(p, \theta_{\amo{A}}(a))
      = \bigcap \{ \theta_{\amo{A}}(b) \mid a \cto b \in p \}.
  \end{equation*}
  We verify that $\Ftop(\amo{A})$ is a selection L-space whenever
  $\amo{A}$ is a conditional lattice.

\begin{lemma}\label{lem:Ftoplspace}
  For any conditional lattice $\amo{A} = (A, \top, \bot, \wedge, \vee, \cto)$,
  the tuple $\Ftop(\amo{A}) = (\Filt(\amo{A}), A, \cap, s_{\amo{A}}, \tau_{\amo{A}})$
  is a selection L-space.
\end{lemma}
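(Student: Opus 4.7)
The plan is to verify each of the four conditions in Definition~\ref{def:condlspace}, exploiting the following alternative description of $s_{\amo{A}}$. Since the set $\{b \in A \mid a \cto b \in p\}$ is closed under binary meets (by the normality axiom together with $p$ being a filter of $\amo{A}$) and contains $\top$ (because the modal top axiom forces $a \cto \top = \top$), the set $F_{p,a} := \ua\{b \in A \mid a \cto b \in p\}$ is a filter of $\amo{A}$, and one may rewrite
\begin{equation*}
  s_{\amo{A}}(p, \theta_{\amo{A}}(a)) = \{q \in \Filt(\amo{A}) \mid F_{p,a} \subseteq q\} = \bigcap\{\theta_{\amo{A}}(b) \mid a \cto b \in p\}.
\end{equation*}
This exhibits $s_{\amo{A}}(p, \theta_{\amo{A}}(a))$ as an intersection of clopen (hence closed) filters of the underlying L-space $(\Filt(\amo{A}), A, \cap, \tau_{\amo{A}})$, which Theorem~\ref{thrm:weakduality} already certifies. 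Hence neither the L-space axioms nor the closed-filter codomain requirement need further work.

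A key tool used throughout is that $\cto$ is monotone in its second argument, which follows immediately from normality: if $b \leq c$ in $A$, then $a \cto b = a \cto (b \wedge c) = (a \cto b) \wedge (a \cto c)$, so $a \cto b \leq a \cto c$. With this in hand, conditions~\eqref{it:S1d} and~\eqref{it:S2d} are essentially bookkeeping: setting $p = A$ in~\eqref{it:S1d} forces $a \cto b \in p$ for every $b \in A$, so $s_{\amo{A}}(A, \theta_{\amo{A}}(a)) = \bigcap_{b \in A}\theta_{\amo{A}}(b) = \{A\}$; and $p \subseteq p'$ in~\eqref{it:S2d} enlarges $\{b \mid a \cto b \in p\}$ and thus shrinks the defining intersection. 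For condition~\eqref{it:St}, I aim to prove the stronger identity $\theta_{\amo{A}}(a) \dto \theta_{\amo{A}}(b) = \theta_{\amo{A}}(a \cto b)$: the inclusion $\supseteq$ is direct from the definition, while $\subseteq$ uses that $F_{p,a}$ itself lies in $s_{\amo{A}}(p, \theta_{\amo{A}}(a))$, so containment in $\theta_{\amo{A}}(b)$ forces $b \in F_{p,a}$ and hence $a \cto b \in p$ by monotonicity.

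The main obstacle is~\eqref{it:S3d}. Given $r \in s_{\amo{A}}(p \cap p', \theta_{\amo{A}}(a))$, equivalently $F_{p \cap p', a} \subseteq r$, I will take the canonical witnesses $q := F_{p,a}$ and $q' := F_{p',a}$, which tautologically lie in $s_{\amo{A}}(p, \theta_{\amo{A}}(a))$ and $s_{\amo{A}}(p', \theta_{\amo{A}}(a))$. It then suffices to show $q \cap q' \subseteq F_{p \cap p', a}$. Given $c \in q \cap q'$, choose $b_0, b_1 \leq c$ with $a \cto b_0 \in p$ and $a \cto b_1 \in p'$; monotonicity of $\cto$ in its second argument gives $a \cto c \in p$ and $a \cto c \in p'$, so $a \cto c \in p \cap p'$ and $c \in F_{p \cap p', a} \subseteq r$, as required.
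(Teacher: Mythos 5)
Your proof is correct and follows essentially the same route as the paper's: the same computation for \eqref{it:S1d} and \eqref{it:S2d}, the same canonical witnesses $F_{p,a}$ and $F_{p',a}$ for \eqref{it:S3d}, and the same identity $\theta_{\amo{A}}(a) \dto \theta_{\amo{A}}(b) = \theta_{\amo{A}}(a \cto b)$ for \eqref{it:St}. The only cosmetic difference is that you justify well-definedness by writing $s_{\amo{A}}(p,\theta_{\amo{A}}(a))$ as an intersection of clopen filters, whereas the paper cites the fact that principal filters of an L-space are closed.
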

\begin{proof}
  We know from Theorem~\ref{thrm:weakduality} that
  $(\Filt(\amo{A}), A, \cap, \tau_{\amo{A}})$ is an L-space.
  Furthermore, $s_{\amo{A}}$ is well defined because principal
  filters in an L-space are automatically closed~\cite[Lemma~2.8]{BezEA24},
  so we only need to verify~\eqref{it:S1d} to~\eqref{it:St}
  form Definition~\ref{def:condlspace}

  \medskip\noindent
  {\it Condition~\eqref{it:S1d}.}
    Recall that $A$ is the top element of $\Ftop(\amo{A})$,
    and that clopen filters are precisely the filters of the form $\theta_\amo{A}(a)$,
    for $a \in A$. Now we find
    \begin{equation*}
      s_\amo{A}(A, \theta_\amo{A}(a)) = \ua \{ b \in A \mid a \cto b \in A \} = \ua \{ A \} = \{ A \},
    \end{equation*}
    because $a \cto b \in A$ for all $a, b \in A$.
  
  \medskip\noindent
  {\it Condition~\eqref{it:S2d}.} Let $p, q \in \Filt(\amo{A})$
  be two filters of $A$ such that $p \subseteq q$.
  Then we have $\{ b \in A \mid a \cto b \in p \} \subseteq \{ b \in A \mid a \cto b \in q \}$,
  and hence
  \begin{equation*}
    s_{\amo{A}}(q, \theta_\amo{A}(a)) 
      = \ua \{ b \in A \mid a \cto b \in q \}
      \subseteq \ua\{ b \in A \mid a \cto b \in p \}
      = s_{\amo{A}}(p, \theta_\amo{A}(a)).
  \end{equation*}
  
  \medskip\noindent
  {\it Condition~\eqref{it:S3d}.}
  Let $p, q, r \in \Filt(\amo{A})$ and suppose
  $r \in s_\amo{A}(p \cap q,\theta_\amo{A}(a))$.
  Let $p' = \{ b \in A \mid a \cto b \in p \}$ and
  $q' = \{b \in A \mid a \cto b \in q \}$.
  Then $p' \in s_\amo{A}(p,\theta_\amo{A}(a))$, $q' \in
  s_\amo{A}(q,\theta_\amo{A}(a))$ and by construction
  $p' \cap q' = \{ b \in A \mid a \cto b \in p \cap q \} \subseteq r$.

  \medskip\noindent
  {\it Condition~\eqref{it:St}.}
  Let $\theta_\amo{A}(a)$ and $\theta_\amo{A}(b)$ be two clopen filters of $\Ftop(\amo{A})$.
  For any $p \in \Filt(\amo{A})$ we have
  \begin{equation*}
    s_{\amo{A}}(p, \theta_\amo{A}(a)) \subseteq \theta_\amo{A}(b)
      \iff \{ c \in A \mid a \cto c \in p \} \in \theta_\amo{A}(b)
      \iff a \cto b \in p.
  \end{equation*}
  Therefore $\theta_\amo{A}(a) \dto \theta_\amo{A}(b) = \{ p \in \Filt(\amo{A}) \mid
  s_{\amo{A}}(p, \theta_\amo{A}(a)) \subseteq \theta_\amo{A}(b) \} = \theta_\amo{A}(a \cto b)$,
  which is clopen in $\Ftop(\amo{A})$ by definition.
\end{proof}

  We now know that the $\Ftop$ defines an assignment $\cat{CLat} \to \cat{SLSpace}$ on 
  objects. We now extend this to morphisms.

\begin{lemma}
  Let $\amo{A} = (A, \top, \bot, \wedge, \vee, \cto)$ and
  $\amo{A}' = (A', \top', \bot', \wedge', \vee', \cto')$ be two conditional
  lattices, and
  $h : \amo{A} \to \amo{A}'$ a conditional lattice homomorphism.
  Then $h^{-1} : \Ftop(\amo{A}') \to \Ftop(\amo{A})$ is a selection morphism
  between selection L-spaces.
\end{lemma}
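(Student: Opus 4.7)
The plan is to leverage the existing lattice duality as much as possible and then reduce the two remaining selection-specific conditions to straightforward calculations that hinge on the fact that $h$ preserves the conditional implication, i.e.\ $h(a \cto b) = h(a) \cto' h(b)$.

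First I would invoke Theorem~\ref{thrm:weakduality} to conclude that $h^{-1}$ is already a continuous L-space morphism between the underlying L-spaces $\Ftop(\amo{A}')$ and $\Ftop(\amo{A})$. Thus all that remains is to verify conditions~\eqref{it:smor-1d} and~\eqref{it:smor-2d} of Definition~\ref{def:sflsm}. Recall that every clopen filter of $\Ftop(\amo{A})$ is of the form $\theta_\amo{A}(a_0)$ for some $a_0 \in A$, and a direct computation using the definition of $\theta$ shows that $(h^{-1})^{-1}(\theta_\amo{A}(a_0)) = \theta_{\amo{A}'}(h(a_0))$. Using the characterisation $s_\amo{A}(p, \theta_\amo{A}(a_0)) = \bigcap\{\theta_\amo{A}(b) \mid a_0 \cto b \in p\}$ from immediately after Definition~\ref{def:lattopo}, together with homomorphy of $h$, one obtains
\begin{align*}
  s_{\amo{A}'}(p', (h^{-1})^{-1}(\theta_\amo{A}(a_0))) &= \{q' \in \Filt(\amo{A}') \mid \forall b' \in A': h(a_0) \cto' b' \in p' \Rightarrow b' \in q'\}, \\
  s_\amo{A}(h^{-1}(p'), \theta_\amo{A}(a_0)) &= \{q \in \Filt(\amo{A}) \mid \forall b \in A: h(a_0) \cto' h(b) \in p' \Rightarrow b \in q\}.
\end{align*}

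For~\eqref{it:smor-1d}, given $q'$ in the first set, I would simply observe that any $b \in A$ with $h(a_0) \cto' h(b) \in p'$ forces $h(b) \in q'$ (specialising the condition to $b' = h(b)$), which says exactly that $b \in h^{-1}(q')$. Hence $h^{-1}(q')$ lies in the second set, as required.

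For~\eqref{it:smor-2d}, given $q$ in the second set, the natural witness is $q' := \{b' \in A' \mid h(a_0) \cto' b' \in p'\}$. This is a filter of $\amo{A}'$ because $\cto'$ is normal in its second argument (using~\eqref{eq:cond-lattice} in $\amo{A}'$), and it trivially lies in the first set since it equals the generating set of that selection. Finally, $h^{-1}(q') = \{b \in A \mid h(a_0) \cto' h(b) \in p'\}$, which is contained in $q$ by the defining property of $q$; equivalently, $h^{-1}(q') \fleq q$ in the semilattice order of $\Ftop(\amo{A})$. No step here is a genuine obstacle; the main point to keep in mind throughout is the correct identification of $(h^{-1})^{-1}$ on clopen filters and the use of $h(a \cto b) = h(a) \cto' h(b)$ to move freely between the two selection functions.
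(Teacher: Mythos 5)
Your proposal is correct and follows essentially the same route as the paper's proof: reduce to the lattice duality for continuity and the L-morphism part, identify $(h^{-1})^{-1}(\theta_{\amo{A}}(a)) = \theta_{\amo{A}'}(h(a))$, verify \eqref{it:smor-1d} by pulling the generating set back along $h$, and verify \eqref{it:smor-2d} with the witness $q' = \{b' \in A' \mid h(a) \cto' b' \in p'\}$. Your remark that this $q'$ is a filter by normality of $\cto'$ is a small point the paper leaves implicit, but otherwise the arguments coincide.
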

\begin{proof}
  Since $\Ftop$ extends the similarly named functor $\cat{Lat} \to \cat{LSpace}$,
  we know from Theorem~\ref{thrm:weakduality} that $h^{-1}$ is an L-morphism.
  Furthermore, it follows that $(h^{-1})^{-1}(\theta_\amo{A}(a)) = \theta_\amo{A}(h(a))$ 
  hence $(h^{-1})^{-1}$ is continuous.
  So we only have to prove that~\eqref{it:smor-1d} and~\eqref{it:smor-2d} hold.

  \medskip\noindent
  {\it Condition~\eqref{it:smor-1d}.}
  Suppose $q' \in s_{\amo{A}'}(p', (h^{-1})^{-1}(\theta_{\amo{A}}(a)))$.
  Since
  \begin{equation*}
    s_{\amo{A}'}(p', (h^{-1})^{-1}(\theta_{\amo{A}}(a)))
      = s_{\amo{A}'}(p', \theta_{\amo{A}'}(h(a)))
      = \ua \{ b' \in A' \mid h(a) \cto b' \in p' \}
  \end{equation*}
  we have $\{ b' \in A' \mid h(a) \cto b' \in p' \} \subseteq q'$.
  This implies that
  \begin{equation*}
    \{ b \in A \mid a \cto b \in h^{-1}(p') \}
      = \{ b \in A \mid h(a) \cto h(b) \in p' \}
      \subseteq h^{-1}(q'),
  \end{equation*}
  so that $h^{-1}(q') \in s_{\amo{A}}(h^{-1}(p'), \theta_{\amo{A}}(a))$
  by definition.

  \medskip\noindent
  {\it Condition~\eqref{it:smor-2d}.}
  Let $q \in s_\amo{A}(h^{-1}(p'), \theta_{\amo{A}}(a))$.
  Then $\{ b \in A \mid a \cto b \in h^{-1}(p') \} \subseteq q$.
  We have seen that
  $s_\amo{A}(h^{-1}(p'), (h^{-1})^{-1}(\theta_{\amo{A}}(a)))
    = \ua \{ b' \in A' \mid h(a) \cto b' \in p' \}$
  so to verify~\eqref{it:smor-2d} it suffices to show that
  $h^{-1}(\{ b' \in A' \mid h(a) \cto b' \in p' \}) \subseteq q$.
  To see that this is indeed the case, compute
  \begin{align*}
    h^{-1}(\{ b' \in A' \mid h(a) \cto b' \in p' \})
    &= \{ b \in A \mid h(a) \cto h(b) \in p' \} \\
    &= \{ b \in A \mid a \cto b \in h^{-1}(p') \}
    \subseteq q.
  \end{align*}
  This proves that $h^{-1}$ is a selection morphism.
\end{proof}

  We now know that setting $\Ftop(h) = h^{-1}$ yields a contravariant
  assignment from $\cat{CLat}$ to $\cat{SLSpace}$. A routine verification
  shows that this yields contravariant functor
  \begin{equation*}
    \Ftop : \cat{CLat} \to \cat{SLSpace}.
  \end{equation*}
  This brings us to the duality for conditional lattices.

\begin{theorem}  \label{thrm:duality}
  The functors $\Fclp$ and $\Ftop$ establish a dual equivalence:
  \begin{equation*}
    \cat{SLSpace} \equiv^{op} \cat{CLat}.
  \end{equation*}
\end{theorem}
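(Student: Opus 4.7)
My approach is to lift the duality of Theorem~\ref{thrm:weakduality} through the conditional implication. The preceding lemmas already show that $\Fclp$ and $\Ftop$ restrict to contravariant functors between $\cat{SLSpace}$ and $\cat{CLat}$ whose underlying L-space and lattice parts coincide with the original functors. It therefore suffices to check that the units $\theta$ and $\eta$ of the L-space/lattice duality upgrade to natural isomorphisms in the enriched categories.

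For $\theta_\amo{A} : \amo{A} \to \Fclp(\Ftop(\amo{A}))$ with $\amo{A} \in \cat{CLat}$, Theorem~\ref{thrm:weakduality} already supplies a lattice isomorphism, so I only need to verify preservation of $\cto$, namely $\theta_\amo{A}(a \cto b) = \theta_\amo{A}(a) \dto \theta_\amo{A}(b)$ in $\Fclp(\Ftop(\amo{A}))$. This is exactly the identity established under~\eqref{it:St} in the proof of Lemma~\ref{lem:Ftoplspace}, so no further work is required here.

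For $\eta_\topo{X} : \topo{X} \to \Ftop(\Fclp(\topo{X}))$ with $\topo{X} \in \cat{SLSpace}$, Theorem~\ref{thrm:weakduality} provides a homeomorphism that is also an L-frame isomorphism; in particular $\eta_\topo{X}$ is a bijection. I have to promote it to a selection morphism by checking conditions~\eqref{it:smor-1d} and~\eqref{it:smor-2d} of Definition~\ref{def:sflsm}. Every clopen filter of $\Ftop(\Fclp(\topo{X}))$ is of the form $\theta_{\Fclp(\topo{X})}(a)$ for some clopen filter $a$ of $\topo{X}$, and a short calculation gives $\eta_\topo{X}^{-1}(\theta_{\Fclp(\topo{X})}(a)) = a$. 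Condition~\eqref{it:smor-1d} then unfolds to: if $y \in s(x, a)$ and $c$ is a clopen filter containing $s(x, a)$, then $y \in c$, which is immediate. The main obstacle is~\eqref{it:smor-2d}. Given $q \in s_{\Fclp(\topo{X})}(\eta_\topo{X}(x), \theta(a))$, unpacking Definition~\ref{def:lattopo} says that every clopen filter of $\topo{X}$ containing $s(x, a)$ lies in $q$. Using bijectivity of $\eta_\topo{X}$, I write $q = \eta_\topo{X}(y')$ for a unique $y' \in X$; the condition becomes: every clopen filter containing $s(x, a)$ also contains $y'$. The key step is to invoke that in an L-space every closed filter is the intersection of the clopen filters containing it, which follows from the HMS separation axiom together with the compactness of $\topo{X}$. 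Applied to the closed filter $s(x, a)$, this forces $y' \in s(x, a)$, so taking $y := y'$ yields $\eta_\topo{X}(y) = q$ and discharges~\eqref{it:smor-2d}.

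Naturality of $\theta$ and $\eta$ in the enriched setting is then inherited from the L-space/lattice duality, since every selection morphism is in particular a continuous L-frame morphism and every conditional lattice homomorphism is in particular a lattice homomorphism, so the original naturality squares apply verbatim.
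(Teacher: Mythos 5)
Your proposal is correct and follows essentially the same route as the paper's proof: reduce to showing the units $\theta$ and $\eta$ are componentwise isomorphisms, use the computation from Lemma~\ref{lem:Ftoplspace} for preservation of $\cto$ by $\theta$, and verify the selection-morphism conditions for $\eta$, with naturality inherited from Theorem~\ref{thrm:weakduality}. Your explicit appeal to the fact that a closed filter is the intersection of the clopen filters containing it (via HMS separation and compactness) justifies the step the paper states as $\bigcap\{b \in \Fclp\topo{X} \mid s(x,a) \subseteq b\} = s(x,a)$ without comment, so it is a welcome clarification rather than a deviation.
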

\begin{proof}
  Define transformations
  $\theta : \mathcal{id}_{\cat{CLat}} \to \Fclp \circ \Ftop$
  and $\eta : \mathcal{id}_{\cat{SLSpace}} \to \Ftop \circ \Fclp$
  on components by
  \begin{equation*}
    \theta_{\amo{A}}(a) = \{ p \in \Ftop(\amo{A}) \mid a \in p \}
    \quad\text{and}\quad
    \eta_{\topo{X}}(x) = \{ a \in \Fclp(\topo{X}) \mid x \in a \}.
  \end{equation*}
  We claim that it suffices to show that
  $\theta$ and $\eta$ are isomorphisms on components.
  Since they are defined as in Theorem~\ref{thrm:weakduality},
  it follows that they are natural isomorphisms.
  Moreover, since the functors $\Fclp$ and $\Ftop$ extend the
  functors from Theorem~\ref{thrm:weakduality}, it also follows
  automatically that the triangle identities hold, that is,
  $\Fclp\eta_{\topo{X}} \circ \theta_{\Fclp\topo{X}} = \mathcal{id}(\Fclp\topo{X})$
  for all selection L-spaces $\topo{X}$,
  and $\Ftop\theta_{\amo{A}} \circ \eta_{\Ftop\amo{A}} = \mathcal{id}(\Ftop\amo{A})$
  for every conditional lattice $\amo{A}$.
  This then implies that $\Fclp$ and $\Ftop$ establish a dual equivalence.
  For the remainder of the proof, fix a conditional lattice
  $\amo{A} = (A, \top, \bot, \wedge, \vee, \cto)$
  and a selection L-space $\topo{X} = (X, \ftop, \fmeet, s, \tau)$.
  
  \medskip\noindent
  \textit{The map $\theta_{\amo{A}}$ is an isomorphism.}
    Since $\theta$ is a map between varieties of algebras, it suffices to
    show that it is a bijective homomorphism.
    We know from the duality for lattices that $\theta_{\amo{A}}$
    is an isomorphism between the underlying lattices, hence bijective.
    So we only have to show that it preserves conditional implications, meaning
    $\theta_{\amo{A}}(a \cto b) = \theta_{\amo{A}}(a) \dto \theta_{\amo{A}}(b)$, 
    where $\dto$ is defined in the dual selection L-space.
    But this was already shown in the proof of Lemma~\ref{lem:Ftoplspace}.
    So $\theta_{\amo{A}}$ is a bijective conditional lattice homomorphism,
    hence an isomorphism.
    
  \medskip\noindent
  \textit{The map $\eta_{\topo{X}}$ is an isomorphism.}
    We know from the duality for lattices that $\eta_{\topo{X}}$ is
    an isomorphism between the underlying L-spaces.
    This implies that it is a bijective L-space morphism.
    It can be shown that the isomorphisms in $\cat{SLSpace}$ are precisely
    the bijective selection morphisms, so we only have to show that $\eta_{\topo{X}}$
    satisfies~\eqref{it:smor-1d} and~\eqref{it:smor-2d}.
    
    For condition~\eqref{it:smor-1d}, let $x, y \in X$ and let $\theta_{\Fclp\topo{X}}(a)$
    be any clopen filter of $\Ftop(\Fclp(\topo{X}))$ (where $a \in \Fclp(\topo{X})$).
    Suppose $y \in s(x, \eta_{\topo{X}}^{-1}(\theta_{\Fclp\topo{X}}(a)))$.
    It follows from the triangle equalities for $\eta$ and $\theta$
    that $\eta_{\topo{X}}^{-1}(\theta_{\Fclp(\topo{X})}(a)) = a$,
    so $y \in s(x, \eta_{\topo{X}}^{-1}(\theta_{\Fclp\topo{X}}(a))) = s(x, a)$.
    We need to show that $\eta_{\topo{X}}(y) \in s_{\Fclp\topo{X}}(\eta_{\topo{X}}(x), \theta_{\Fclp\topo{X}}(a))$.
    By definition
    \begin{align*}
      s_{\Fclp\topo{X}}(\eta_{\topo{X}}(x), \theta_{\amo{A}}(a))
        &= \ua \{ b \in \Fclp\topo{X} \mid a \cto b \in \eta_{\topo{X}}(x) \} \\
        &= \ua \{ b \in \Fclp\topo{X} \mid x \in a \dto b \}
        = \ua \{ b \in \Fclp\topo{X} \mid s(x, a) \subseteq b \}.
    \end{align*}
    Since $y \in s(x, a)$, it follows that
    $\{ b \in \Fclp\topo{X} \mid s(x, a) \subseteq b \} \subseteq \eta_{\topo{X}}(y)$.
    This proves that
    $\eta_{\topo{X}}(y) \in s_{\Fclp\topo{X}}(\eta_{\topo{X}}(x), \theta_{\amo{A}}(a))$,
    as desired.
    
    For condition~\eqref{it:smor-2d},
    let $x \in X$ and let $\theta_{\Fclp\topo{X}}(a)$ be a clopen filter of
    $\Ftop(\Fclp(\topo{X}))$. Suppose $y'$ is an element of $\Ftop(\Fclp(\topo{X}))$
    such that $y' \in s_{\Fclp\topo{X}}(\eta_{\topo{X}}(x), \theta_{\Fclp\topo{X}}(a))$.
    Then
    \begin{align*}
      y'
        \supseteq \{ b \in \Fclp\topo{X} \mid a \dto b \in \eta_{\topo{X}}(x) \}
        &= \{ b \in \Fclp\topo{X} \mid x \in a \dto b \} \\
        &= \{ b \in \Fclp\topo{X} \mid s(x, a) \subseteq b \}.
    \end{align*}
    Since $\eta$ is an L-space isomorphism, it is bijective.
    Therefore $y'$ must be of the form $\eta_{\topo{X}}(y)$ for some $y \in X$.
    Now it suffices to show that $y \in s(x, \eta_{\topo{X}}^{-1}(\theta_{\Fclp\topo{X}}(a)))$.
    We have seen that
    $s(x, \eta_{\topo{X}}^{-1}(\theta_{\Fclp\topo{X}}(a))) = s(x, a)$,
    so we only need to show that $y \in s(x, a)$.
    By assumption we have $\{ b \in \Fclp\topo{X} \mid s(x, a) \subseteq b \} \subseteq \eta_{\topo{X}}(y)$,
    which implies $y \in \bigcap \{ b \in \Fclp\topo{X} \mid s(x, a) \subseteq b \} = s(x, a)$.
    Therefore $\eta$ is a selection morphism, hence an isomorphism.
    This completes the proof of the duality.
\end{proof}

\subsection{Completeness via duality}\label{sec:scf}

  We leverage the duality from Theorem~\ref{thrm:duality} to get completeness
  for $\log{CL}$ with respect to selection L-frames.
  First, we prove a completeness result for selection L-spaces.

\begin{theorem}
  Let $\Gamma$ be a set of consequence pairs.
  Then the logic $\log{CL}(\Gamma)$ is sound and complete with respect to the
  class of selection L-spaces that validate all consequence pairs in $\Gamma$.
\end{theorem}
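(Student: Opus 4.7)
The plan is to derive soundness and completeness simultaneously by routing through the duality of Theorem~\ref{thrm:duality} and the algebraic completeness of Theorem~\ref{thm:alg-sound-compl}. The key auxiliary observation is that a selection L-space $\topo{X}$ may be viewed as a descriptive general frame by taking the admissible filters to be $\Fclp(\topo{X})$; this works precisely because $\Fclp(\topo{X})$ contains $X$ and $\{\ftop\}$ and is closed under $\cap$, $\genFil$, and $\dto$ (the last by \eqref{it:St}). Under this identification, clopen valuations for $\topo{X}$ coincide with admissible valuations for the general frame, so L-space validity coincides with general-frame validity, and the complex algebra of $\topo{X}$ in the sense of Definition~\ref{def:gfrm-cpx} is exactly $\Fclp(\topo{X})$.

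For soundness, I would invoke Theorem~\ref{thm:gen-frm-sound} via the identification above: if $\phi \vdash_\Gamma \psi$ then $\phi \Vdash^g_\Gamma \psi$ in the general-frame sense, and in particular every selection L-space that validates $\Gamma$ (as an L-space, equivalently as a descriptive general frame) validates $\phi \cp \psi$.

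For completeness, suppose $\phi \not\vdash_\Gamma \psi$. By Theorem~\ref{thm:alg-sound-compl}, there exists $\amo{A} \in \cat{CLat}(\Gamma)$ with $\amo{A} \not\cdash \phi \cp \psi$. Form the dual selection L-space $\Ftop(\amo{A})$ (a selection L-space by Lemma~\ref{lem:Ftoplspace}), and consider its complex conditional lattice $\Fclp(\Ftop(\amo{A}))$. By Theorem~\ref{thrm:duality}, the unit $\theta_{\amo{A}}$ is a conditional lattice isomorphism $\amo{A} \to \Fclp(\Ftop(\amo{A}))$; hence the two algebras validate exactly the same consequence pairs. Therefore $\Fclp(\Ftop(\amo{A}))$ validates every pair in $\Gamma$ but not $\phi \cp \psi$. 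Applying the L-space analogue of Lemma~\ref{lem:gen-cpx-summary}\eqref{it:gen-cpx-validity} to $\Ftop(\amo{A})$ viewed as a descriptive general frame, validity in $\Ftop(\amo{A})$ coincides with validity in $\Fclp(\Ftop(\amo{A}))$. Consequently, $\Ftop(\amo{A})$ is a selection L-space that validates $\Gamma$ and refutes $\phi \cp \psi$, which proves completeness.

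The main obstacle is really only bookkeeping rather than a genuine technical difficulty: one has to be careful to verify that ``L-space validity via clopen valuations'' genuinely matches ``general-frame validity with admissible filters $=$ clopen filters.'' This needs the fact that every clopen valuation extends inductively to interpret every formula as a clopen filter, which is immediate from \eqref{it:St} and the closure properties of $\Fclp(\topo{X})$ under $\cap$ and $\genFil$ inherited from the L-space structure.
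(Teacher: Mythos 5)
Your proposal is correct and follows essentially the same route as the paper: algebraic completeness (Theorem~\ref{thm:alg-sound-compl}), passage to the dual selection L-space, and transfer of validity via the isomorphism $\amo{A} \cong \Fclp(\Ftop(\amo{A}))$ together with Lemma~\ref{lem:gen-cpx-summary} applied to the space viewed as a descriptive general frame. Your soundness step is in fact slightly more careful than the paper's (which cites the selection L-frame soundness theorem), since routing through Theorem~\ref{thm:gen-frm-sound} correctly accounts for the fact that a selection L-space is a general frame rather than a full selection L-frame.
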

\begin{proof}
  Soundness was proven in Theorem~\ref{thrm:framesound}. 
  For completeness, suppose that $\phi \cp \psi$
  is not derivable in $\log{CL}(\Gamma)$.
  Then by Theorem~\ref{thm:alg-sound-compl} there exists a conditional
  lattice $\amo{A}$ that validates all consequence pairs in $\Gamma$, 
  but not $\phi \cp \psi$.
  Let $\topo{X}$ be the selection L-space dual to $\amo{A}$.
  Then $\Fclp(\topo{X})$ is isomorphic to $\amo{A}$,
  so it follows from Lemma~\ref{lem:gen-cpx-summary} (and the fact that we can
  view selection L-spaces as general frames) that $\topo{X}$ validates
  all consequence pairs in $\Gamma$ but not $\phi \cp \psi$.
  This proves completeness.
\end{proof}

  In order to transfer the completeness result for selection L-spaces
  to one for selection L-frames, we need a way to go from spaces to frames.
  In normal modal logic this is easy: simply forget about the topology.
  However, when working with selection functions the situation is more
  precarious. Indeed, a topological selection function is defined only
  on clopen filters, so discarding the topology leaves us with an
  ``incomplete'' selection function. We remedy this by ``filling in''
  the gaps. This can be done in several ways, one of which is defined next.
  
\begin{definition}\label{def:top-fill-in}
  Let $\topo{X} = (X, \ftop, \fmeet, s, \tau)$ be a conditional L-space.
  Define
  \begin{equation*}
    s_1 : X \times \Filt(X, \ftop, \fmeet) \to \Filt(X, \ftop, \fmeet)
  \end{equation*}
  by
  \begin{equation*}
    s_1(x, p) = \left\{%
      \begin{array}{ll}
        s(x, p) &\text{if $p$ is a clopen filter} \\
        \{ 1 \} &\text{if $p$ is a non-clopen fitler}
      \end{array}\right.
  \end{equation*}
  We define the \emph{top fill-in} or the \emph{$\kappa_1$-fill-in}
  of $\topo{X}$ as $\kappa_1\topo{X} := (X, \ftop, \fmeet, s_1)$.
\end{definition}

  Evidently, every clopen valuation for $\topo{X}$ is also a valuation
  for $\kappa_1\topo{X}$. Moreover, if $V$ is a clopen valuation then
  formulas in $(\topo{X}, V)$ are
  interpreted in the same way as in $(\kappa_1\topo{X}, V)$, because
  only the action of $s_1$ on clopen filters is used the determine
  the interpretation of formulas. Therefore:

\begin{lemma}\label{lem:kappa1}
  Let $\topo{X}$ be a selection L-space such that
  $\topo{X} \not\Vdash \phi \cp \psi$.
  Then $\kappa_1\topo{X} \not\Vdash \phi \cp \psi$.
\end{lemma}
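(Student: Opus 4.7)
The plan is to unpack the assumption $\topo{X} \not\Vdash \phi \cp \psi$ into the existence of a concrete clopen valuation $V$ and world $x \in X$ with $\mo{M}, x \Vdash \phi$ but $\mo{M}, x \not\Vdash \psi$, where $\mo{M} = (\topo{X}, V)$, and then lift this same $V$ to a valuation on $\kappa_1 \topo{X}$. Since $V : \Prop \to \Fclp(\topo{X})$ already assigns filters, it is a valuation on the L-frame underlying $\kappa_1 \topo{X}$, so the model $\mo{M}' := (\kappa_1 \topo{X}, V)$ is well-defined.

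The core step is to establish by induction on the structure of $\chi \in \mathbf{CL}$ that
\begin{equation*}
  \llb \chi \rrb^{\mo{M}} = \llb \chi \rrb^{\mo{M}'} \qquad\text{and}\qquad \llb \chi \rrb^{\mo{M}} \in \Fclp(\topo{X}).
\end{equation*}
The atomic case holds by choice of $V$, and the cases $\top, \bot, \wedge, \vee$ are immediate from the fact that $\kappa_1 \topo{X}$ has the same underlying L-frame as $\topo{X}$ together with the closure of $\Fclp(\topo{X})$ under $\cap$ and $\genFil$ (which is part of the L-space structure). For $\chi = \alpha \cto \beta$, the induction hypothesis gives $\llb \alpha \rrb^{\mo{M}} = \llb \alpha \rrb^{\mo{M}'}$ with this common value being a clopen filter $a$. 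Because $s_1$ is defined to agree with $s$ on clopen filters, we have $s_1(y, a) = s(y, a)$ for every $y \in X$, so the semantic clause for $\cto$ yields the same truth set in $\mo{M}$ and $\mo{M}'$; clopenness is preserved by axiom \eqref{it:St}.

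Having established the equality of truth sets, the witness $x$ for $\topo{X}, x \Vdash \phi$ and $\topo{X}, x \not\Vdash \psi$ is also a witness in $\kappa_1 \topo{X}$, giving $\kappa_1 \topo{X} \not\Vdash \phi \cp \psi$ as required. There is no real obstacle here; the only thing to be careful about is that the induction must simultaneously track clopenness (to justify invoking the agreement of $s_1$ and $s$) and equality of truth sets, rather than trying to prove them separately. Axiom \eqref{it:St} is precisely what makes the conditional implication step close, so the argument uses nothing beyond the definition of a selection L-space and the definition of $s_1$.
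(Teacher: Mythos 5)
Your proof is correct and follows essentially the same route as the paper, which justifies the lemma only by the informal remark that every clopen valuation for $\topo{X}$ is a valuation for $\kappa_1\topo{X}$ and that only the action of $s_1$ on clopen filters is used to interpret formulas. Your simultaneous induction tracking both equality of truth sets and their clopenness is precisely the detailed version of that remark, with the $\cto$ step correctly resting on \eqref{it:St} and the agreement of $s_1$ with $s$ on clopen filters.
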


  Using this, we obtain completeness of $\log{CL}$ with respect selection L-frames.

\begin{theorem}\label{thm:basic-completeness}
  The logic $\log{CL}$ is sound and complete with respect to the class
  of all selection L-frames.
\end{theorem}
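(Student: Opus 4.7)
The plan is to bootstrap completeness from the preceding theorem (completeness of $\log{CL}(\Gamma)$ with respect to selection L-spaces, specialised to $\Gamma = \emptyset$) via the top fill-in from Definition~\ref{def:top-fill-in} and Lemma~\ref{lem:kappa1}. Soundness is immediate from Theorem~\ref{thrm:framesound}, so the work is entirely on the completeness direction.

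For completeness I argue contrapositively. Suppose $\phi \cp \psi$ is not derivable in $\log{CL}$. By the preceding theorem (applied with $\Gamma = \emptyset$), there is a selection L-space $\topo{X}$ with $\topo{X} \not\Vdash \phi \cp \psi$. I then pass to the top fill-in $\kappa_1\topo{X} = (X, \ftop, \fmeet, s_1)$. Lemma~\ref{lem:kappa1} tells us immediately that $\kappa_1\topo{X} \not\Vdash \phi \cp \psi$, so it remains only to certify that $\kappa_1\topo{X}$ really is a selection L-frame in the sense of Definition~\ref{def:selectionfunctionlframe}.

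This last check is the only point that needs any verification, and it is easy: one has to show that $s_1$ satisfies \eqref{it:sf-top}, \eqref{it:sf-up} and \eqref{it:sf-filt} for \emph{all} filters $p \in \Filt(X, \ftop, \fmeet)$, not just clopen ones. For clopen $p$ the three conditions are inherited from the fact that $s$ is a topological selection function on $\topo{X}$ (conditions \eqref{it:S1d}--\eqref{it:S3d}). For non-clopen $p$ we have $s_1(x, p) = \{\ftop\}$ for every $x$, and the conditions collapse to trivialities: \eqref{it:sf-top} is $\{\ftop\} = \{\ftop\}$; \eqref{it:sf-up} is $\{\ftop\} \subseteq \{\ftop\}$; and \eqref{it:sf-filt} is witnessed by $u = v = \ftop$, since $\ftop \fmeet \ftop = \ftop \fleq \ftop$.

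I do not expect any serious obstacle here; the non-trivial work has already been absorbed into the duality (Theorem~\ref{thrm:duality}), the completeness result for selection L-spaces, and Lemma~\ref{lem:kappa1}. The mild conceptual point worth emphasising is that the fill-in is needed at all: a topological selection function is defined only on clopen filters, so one cannot simply discard the topology to move from a selection L-space to a selection L-frame. The $\kappa_1$-fill-in is the most conservative resolution of this mismatch, and it is precisely tailored so that clopen valuations compute the same truth sets before and after the fill-in, which is exactly what Lemma~\ref{lem:kappa1} records.
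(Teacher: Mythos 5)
Your proof is correct and follows essentially the same route as the paper: soundness from Theorem~\ref{thrm:framesound}, and completeness by taking a refuting selection L-space from the space-level completeness theorem and passing to its top fill-in via Lemma~\ref{lem:kappa1}. Your explicit check that $s_1$ satisfies \eqref{it:sf-top}--\eqref{it:sf-filt} on non-clopen filters is a detail the paper leaves implicit, and your verification of it is accurate.
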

\begin{proof}
  Soundness was proven in Theorem~\ref{thrm:framesound}.
  For completeness, suppose $\phi \cp \psi$
  is not derivable in $\log{CL}$.
  Then there exists a selection L-space $\topo{X}$ that does not
  validate $\phi \cp \psi$, so that by Lemma~\ref{lem:kappa1}
  $\kappa_1\topo{X}$ does not validate $\phi \cp \psi$.
\end{proof}

  While the top fill-in allows us to prove completeness for $\log{CL}$,
  we do not automatically get completeness for logics of the form $\log{CL}(\Gamma)$.
  Indeed, while $\topo{X}$ may validate all consequence pairs in $\Gamma$,
  this need not be the case for $\kappa_1\topo{X}$.
  In the next section we explore how we can modify the fill-in
  to obtain completeness for various extensions of $\log{CL}$.

\section{Extensions}\label{sec:extensions}

  To showcase the versatility of the semantic framework of sub-sub-intuitionistic,
  we consider various extension of the basic logic
  $\log{CL}$ with axioms (that is, with consequence pairs).
  We provide completeness results for each of these, as well as certain
  combinations of axioms.
  We use a strategy similar to that in Section~\ref{sec:scf} with two
  additional steps:
  \begin{enumerate}
    \item For each axiom, we find a frame condition corresponding to validity
          of the axiom. We prove these correspondence results with respect to
          general frames and them to selection L-frames and -spaces.
    \item We show that validity of the axiom under consideration
          (or the corresponding frame condition) is preserved when moving
          from a selection L-space to a selection L-frame.
  \end{enumerate}
  
  This reveals an interesting phenomenon: there may be several ways to extend
  a topological selection function to a selection function that is defined on
  all filters, and some of them may not preserve validity of the axiom
  under consideration.
  In Section~\ref{subsec:non-top} we shall see an example
  of an axiom that is not preserved under the top fill-in
  (from Definition~\ref{def:top-fill-in}. That is, there exists a selection
  L-space $\topo{X}$ validating the axiom while $\kappa_1\topo{X}$ does not.
  We can still obtain completeness, but we have to use a different type
  of fill-in.
  
  We use the completeness results for extensions of $\log{CK}$ to elucidate
  the connection between sub-sub-intuitionistic logic and
  intuitionistic conditional logic~\cite{Wei19a} in
  Section~\ref{subsec:ssi-vs-int-cond}, and to derive an alternative
  (semilattice-based) semantics for intuitionistic logic
  in Section~\ref{subsec:int-sl}.
 
  Where possible, the naming of the axioms we consider follows that of
  classical counterparts from~\cite{UntSch14}.
  We refer to \cite{UntSch14} for a discussion of the axioms.
  All correspondence results are given in the appendix.

\subsection{Some simple completeness results}\label{subsec:top-fill-in}

  We begin our investigation of extensions of $\log{CL}$ with four axioms
  whose completeness we can prove in a way similar to
  Theorem~\ref{thm:basic-completeness}, using the top fill-in.
  As announced, to achieve this we give correspondence conditions for each
  of the axioms, and we show that these are preserved by taking the
  top fill-in of a selection L-space, a property
  we call \emph{$\kappa_1$-persistence}.

  Consider the following axioms:
  \begin{multicols}{2}
  \begin{enumerate}\itemsep=0pt
    \renewcommand{\labelenumi}{(\theenumi) }
    \renewcommand{\theenumi}{$\mathsf{refl}$}
    \item \label{ax:refl}
          $\top \cp p \cto p$
    \renewcommand{\theenumi}{$\mathsf{cond}$}
    \item \label{ax:cond}
          $p \cp (\top \cto p)$
    \renewcommand{\theenumi}{$\mathsf{veq}$}
    \item \label{ax:veq}
          $p \cp q \cto p$
    \renewcommand{\theenumi}{$\mathsf{cs}$}
    \item \label{ax:cs}
          $p \wedge q \cp p \cto q$
  \end{enumerate}
  \end{multicols}

  We find the following frame correspondence conditions.

\begin{proposition}\label{prop:triv-fill-corr}
  A general frame $\mo{G} = (X, \ftop, \fmeet, s, A)$ validates
  $\log{ax} \in \{ \axref{ax:refl}, \axref{ax:cond}, \axref{ax:veq}, \axref{ax:cs} \}$
  if and only if it satisfies
  the frame condition \textup{($\log{ax}$-corr)} given by:
  \begin{enumerate}\itemsep=0pt
    \renewcommand{\labelenumi}{\textup{(\theenumi)} }
          \renewcommand{\theenumi}{\textup{\ref{ax:refl}-corr}}
    \item \label{eq:refl-corr}
          \makebox[12em][l]{$s(x, a) \subseteq a$} for all $x \in X$ and $a \in A$;
          \renewcommand{\theenumi}{\textup{\ref{ax:cond}-corr}}
    \item \label{eq:cond-corr}
          \makebox[12em][l]{$x \in a$ implies $s(x, X) \subseteq a$} for all $x \in X$ and $a \in A$;
          \renewcommand{\theenumi}{\textup{\ref{ax:veq}-corr}}
    \item \label{eq:veq-corr}
          \makebox[12em][l]{$x \in a$ implies $s(x, b) \subseteq a$} for all $x \in X$ and $a, b \in A$;
          \renewcommand{\theenumi}{\textup{\ref{ax:cs}-corr}}
    \item \label{eq:cs-corr}
          \makebox[12em][l]{$x \in a \cap b$ implies $s(x, a) \subseteq b$} for all $x \in X$ and $a, b \in A$.
  \end{enumerate}
  Furthermore, if $\mo{G}$ is full or descriptive then
  \eqref{eq:veq-corr} and~\eqref{eq:cs-corr} are equivalent to, respectively,
  \begin{enumerate}\itemsep=0pt
    \renewcommand{\labelenumi}{\textup{(\theenumi)} }
          \renewcommand{\theenumi}{\textup{\ref{ax:veq}-corr$'$}}
    \item \label{eq:veq-corr-fd}
          \makebox[12em][l]{$s(x, a) \subseteq \ua x$} for all $x \in X$ and $a \in A$;
          \renewcommand{\theenumi}{\textup{\ref{ax:cs}-corr$'$}}
    \item \label{eq:cs-corr-fd}
          \makebox[12em][l]{$x \in a$ implies $s(x, a) \subseteq \ua x$} for all $x \in X$ and $a \in A$;
  \end{enumerate}
\end{proposition}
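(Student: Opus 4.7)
The plan is to handle the four axioms one by one by unfolding the definition of validity in a general model and matching it to the stated frame conditions. For each axiom $\log{ax}$, I fix an arbitrary general frame $\mo{G} = (X, \ftop, \fmeet, s, A)$ and an admissible valuation $V$, and use Proposition~\ref{prop:persistence} together with the clause for $\cto$ from Definition~\ref{def:sf-model}. For instance, for \axref{ax:refl} validity means $X \subseteq \llb p \cto p \rrb$ for every admissible $V$, which by the semantic clause unfolds to $s(x, V(p)) \subseteq V(p)$ for all $x \in X$; since $V(p)$ ranges over all of $A$ as $V$ varies, this is exactly \eqref{eq:refl-corr}. The argument for \axref{ax:cond}, \axref{ax:veq} and \axref{ax:cs} is the same boilerplate, noting that $\llb \top \rrb = X$ and $\llb p \wedge q \rrb = V(p) \cap V(q)$, and letting proposition letters range over arbitrary admissible filters. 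Conversely, if the frame condition holds, the same unfolding shows validity of the axiom.

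For the equivalences in the full and descriptive cases, the key point is that the principal filter $\ua x$ becomes accessible as an admissible filter, either directly or as an intersection of admissibles. In the full case $A = \Filt(X,\ftop,\fmeet)$, so $\ua x$ is itself admissible; plugging $a = \ua x$ in \eqref{eq:veq-corr} gives \eqref{eq:veq-corr-fd}, and conversely, if $x \in a$ then $\ua x \subseteq a$ (as $a$ is upward closed), so $s(x,b) \subseteq \ua x \subseteq a$. The same trick handles \eqref{eq:cs-corr} vs.\ \eqref{eq:cs-corr-fd}.

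For the descriptive case $A = \Fclp(\topo{X})$, I would invoke the HMS separation axiom from Definition~\ref{def:lspaces}: if $y \not\in {\ua x}$, i.e.\ $x \not\fleq y$, then there is a clopen filter $a \in A$ with $x \in a$ and $y \notin a$. Hence $\ua x = \bigcap \{a \in A \mid x \in a\}$. Assuming \eqref{eq:veq-corr}, for every clopen $a$ containing $x$ we have $s(x,b) \subseteq a$, and intersecting over all such $a$ yields \eqref{eq:veq-corr-fd}. The converse implication uses only upward closure of $a$, exactly as in the full case. The derivation for \eqref{eq:cs-corr} is identical, just with the extra hypothesis $x \in a$ carried through.

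The only real subtlety, and the step I would be most careful about, is the descriptive direction from \eqref{eq:veq-corr} to \eqref{eq:veq-corr-fd}: one must be confident that $\bigcap \{a \in \Fclp(\topo{X}) \mid x \in a\}$ really equals $\ua x$, which is where the HMS separation axiom is essential. Once this identity is in hand, everything else is a mechanical substitution of the definitions. No choice of fill-in is needed here since the correspondence is stated at the level of general frames, which subsumes both the selection L-frame and selection L-space cases uniformly.
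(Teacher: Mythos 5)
Your proposal is correct and follows essentially the same route as the paper: a direct unfolding of the semantic clause for $\cto$ for each of the four basic correspondences, and an appeal to the HMS separation axiom (respectively, admissibility of $\ua x$ in the full case) for the primed conditions. The only cosmetic difference is that the paper establishes the ``moreover'' part by contraposition, using HMS to pick a single admissible filter separating $x$ from a witness $y \in s(x,a) \setminus \ua x$, whereas you derive $s(x,a) \subseteq \ua x$ directly by intersecting over all admissible filters containing $x$; both rest on the same identity $\ua x = \bigcap\{a \in A \mid x \in a\}$.
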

  
  Next, we show that satisfaction of the correspondence conditions
  is preserved when moving from selection L-space $\topo{X}$
  to the selection L-frame $\kappa_1\topo{X}$.
  In the context of normal modal logic, this property is sometimes called 
  \emph{d-persistence}~\cite[Definition~5.84]{BlaRijVen01}.
  Correspondingly, we call the move from spaces to frames \emph{$\kappa_1$-persistence}.

\begin{lemma}\label{lem:triv-fill-canonicity}
  The axioms $\axref{ax:refl}$, $\axref{ax:cond}$, $\axref{ax:veq}$
  and $\axref{ax:cs}$ are all $\kappa_1$-persistent.
\end{lemma}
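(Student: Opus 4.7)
The plan is to use the correspondence results from Proposition~\ref{prop:triv-fill-corr} together with the fact that the top fill-in $s_1$ agrees with $s$ on clopen filters and returns $\{\ftop\}$ on all non-clopen filters. A selection L-space $\topo{X}$ can be viewed as a descriptive general frame with admissible set $\Fclp(\topo{X})$, so validity of an axiom on $\topo{X}$ is equivalent to the corresponding frame condition holding for clopen filters. For $\kappa_1\topo{X}$, viewed as a full general frame, I need the same condition to hold for \emph{all} filters.

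For each of the four axioms I would split the verification into two cases, according to whether the filter appearing as second argument of the selection function is clopen. In the clopen case $s_1 = s$, and the condition is inherited directly from $\topo{X}$. In the non-clopen case $s_1(x,\cdot) = \{\ftop\}$, which is contained in every filter (filters being upward closed and containing the top); this makes the conclusion trivial for \axref{ax:refl}. For \axref{ax:veq} and \axref{ax:cs}, I would invoke the equivalent forms~\ref{eq:veq-corr-fd} and~\ref{eq:cs-corr-fd}, whose right-hand side $\ua x$ is a filter and hence contains $\ftop$; this disposes of the non-clopen case immediately and leaves the clopen case to be inherited.

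The only axiom requiring a little more work is \axref{ax:cond}, because here the second argument of $s$ is always $X$ (which is clopen, so $s_1(x,X) = s(x,X)$) but the \emph{target} filter $a$ in condition~\ref{eq:cond-corr} can be arbitrary. To handle this, I would first show that on $\topo{X}$ the descriptive condition~\ref{eq:cond-corr} already implies the stronger inclusion $s(x,X) \subseteq \ua x$ for every $x \in X$. This follows from the HMS separation axiom, which yields $\ua x = \bigcap\{c \in \Fclp(\topo{X}) \mid x \in c\}$; intersecting $s(x,X) \subseteq c$ over all such clopen $c$ then gives the claim. Once $s(x,X) \subseteq \ua x$ is established, the full-frame condition for $\kappa_1\topo{X}$ follows at once, since any filter $a$ containing $x$ automatically contains $\ua x$.

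The main potential obstacle is precisely the \axref{ax:cond} case: it is the only one where the target filter need not be clopen and where the trivial collapse $s_1(x,b) = \{\ftop\}$ does not apply (since $b = X$ is clopen). The HMS-based reduction to $\ua x$ is therefore essential there, while the remaining three axioms boil down to a clopen/non-clopen case split with no further subtleties.
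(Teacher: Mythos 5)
Your proof is correct and follows essentially the same route as the paper: a clopen/non-clopen case split on the second argument of $s_1$, with the non-clopen case collapsing to $\{\ftop\}$, and the primed correspondence conditions \eqref{eq:veq-corr-fd} and \eqref{eq:cs-corr-fd} invoked for $\axref{ax:veq}$ and $\axref{ax:cs}$. You are also right that $\axref{ax:cond}$ is the one case not disposed of by that split, and your HMS-based reduction $s(x,X)\subseteq\ua x$ supplies exactly the detail the paper's proof elides with ``the cases for $\axref{ax:cond}$ and $\axref{ax:veq}$ are similar.''
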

\begin{proof}
  Let $\topo{X} = (X, \ftop, \fmeet, s, \tau)$ be a selection L-space.
  Suppose $\topo{X}$ satisfies~\eqref{eq:refl-corr}.
  Then we have $s(x, a) \subseteq a$ for all $x \in X$ and all
  clopen filters $a$ of $\topo{X}$. We need to prove that
  $s_1(x, a) \subseteq a$ for all filters $a$ of $(X, \ftop, \fmeet)$.
  If $a$ is a clopen filter, then we have $s_1(x, a) = s(x, a) \subseteq a$.
  If not, then $s_1(x, a) = \{ \ftop \} \subseteq a$ because all filters
  contain $\ftop$. So $\kappa_1\topo{X}$ satisfies~\eqref{eq:refl-corr}.
  This proves that $\axref{ax:refl}$ is $\kappa_1$-persistent.
  The cases for~$\axref{ax:cond}$ and $\axref{ax:veq}$ are similar,
  using \eqref{eq:veq-corr-fd} for the latter.
  
  To prove $\kappa_1$-persistence of $\axref{ax:cs}$ we use the
  correspondence condition~\eqref{eq:cs-corr-fd}.
  Let $\topo{X} = (X, \ftop, \fmeet, s, \tau)$ be a conditional L-space
  that satisfies~\eqref{eq:cs-corr-fd}.
  We show that $x \in a$ implies $s_1(x, a) \subseteq \ua x$ for every
  filter $a$ of $(X, \ftop, \fmeet)$.
  Let $a$ be such a filter and $x \in a$.
  If $a$ is a clopen filter of $\topo{X}$, then we have
  $s_1(x, a) = s(x, a) \subseteq \ua x$.
  If not, then $s_1(x, a) = \{ \ftop \} \subseteq \ua x$.
\end{proof}

\begin{theorem}\label{thm:fill-1-compl}
  Let $\Ax \subseteq \{ \axref{ax:refl}, \axref{ax:cond}, \axref{ax:veq}, \axref{ax:cs} \}$. Then the logic
  $\log{CL}(\{ \Ax \})$ is sound and complete with respect to the
  class of selection L-frames that satisfy \textup{($\log{ax}$-corr)}
  for each $\log{ax} \in \Ax$.
\end{theorem}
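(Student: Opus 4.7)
The plan is to combine the three main ingredients already established: the algebraic completeness of Theorem~\ref{thm:alg-sound-compl}, the duality of Theorem~\ref{thrm:duality}, the correspondence results of Proposition~\ref{prop:triv-fill-corr}, and the $\kappa_1$-persistence lemma (Lemma~\ref{lem:triv-fill-canonicity}). The basic template is exactly that of Theorem~\ref{thm:basic-completeness}, but now we must carry the axioms through each translation step.

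For \emph{soundness}, I would argue as follows. Suppose $\mo{X} = (X, \ftop, \fmeet, s)$ is a selection L-frame satisfying $(\log{ax}\text{-corr})$ for every $\log{ax} \in \Ax$. View $\mo{X}$ as a full general frame. By the corresponding direction of Proposition~\ref{prop:triv-fill-corr}, $\mo{X}$ validates every axiom in $\Ax$. Soundness of the logical rules and of the basic axioms is covered by Theorem~\ref{thrm:framesound}, so $\phi \vdash_{\Ax} \psi$ implies $\mo{X} \Vdash \phi \cp \psi$.

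For \emph{completeness}, suppose $\phi \cp \psi \notin \log{CL}(\Ax)$. By Theorem~\ref{thm:alg-sound-compl}, there is a conditional lattice $\amo{A} \in \cat{CLat}(\Ax)$ with $\llp \phi \rrp_{\amo{A}} \not\leq \llp \psi \rrp_{\amo{A}}$ under some assignment. Let $\topo{X} = \Ftop(\amo{A})$; by Theorem~\ref{thrm:duality} we have $\Fclp(\topo{X}) \cong \amo{A}$ as conditional lattices. Viewing $\topo{X}$ as a descriptive general frame, Lemma~\ref{lem:gen-cpx-summary} gives $\topo{X} \Vdash \log{ax}$ for each $\log{ax} \in \Ax$ and $\topo{X} \not\Vdash \phi \cp \psi$. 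Now apply Proposition~\ref{prop:triv-fill-corr} to the descriptive general frame $\topo{X}$: since each axiom in $\Ax$ lies among $\{\axref{ax:refl}, \axref{ax:cond}, \axref{ax:veq}, \axref{ax:cs}\}$, the space $\topo{X}$ satisfies the corresponding frame condition (using the descriptive versions \eqref{eq:veq-corr-fd} and \eqref{eq:cs-corr-fd} where relevant).

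Finally, form $\kappa_1\topo{X}$. By Lemma~\ref{lem:triv-fill-canonicity}, each of the axioms under consideration is $\kappa_1$-persistent, so $\kappa_1\topo{X}$ again satisfies $(\log{ax}\text{-corr})$ for every $\log{ax} \in \Ax$ (now viewed as a full general frame, where the alternative conditions \eqref{eq:veq-corr-fd} and \eqref{eq:cs-corr-fd} apply). By Lemma~\ref{lem:kappa1}, $\kappa_1\topo{X} \not\Vdash \phi \cp \psi$, which provides a refuting selection L-frame in the required class. The only subtle point --- and the place I would be most careful --- is to make sure the correspondence result is invoked in the right direction at each stage: once on the descriptive side (to transfer axiom validity to a frame condition that Lemma~\ref{lem:triv-fill-canonicity} can consume), and once on the full side (to see that the persisted frame condition implies validity of the axiom on $\kappa_1\topo{X}$). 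Everything else is bookkeeping over $\Ax$.
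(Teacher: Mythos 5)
Your proof is correct and follows essentially the same route as the paper: soundness via Proposition~\ref{prop:triv-fill-corr}, and completeness by running the argument of Theorem~\ref{thm:basic-completeness} through the algebraic completeness, the duality, and the $\kappa_1$-persistence of Lemma~\ref{lem:triv-fill-canonicity}. The paper's proof is just a terser statement of exactly the steps you spell out, including the two-directional use of the correspondence conditions on the descriptive and full sides.
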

\begin{proof}
  Soundness follows from Proposition~\ref{prop:triv-fill-corr}.
  Completeness can be shown as in Theorem~\ref{thm:basic-completeness},
  using the fact that Lemma~\ref{lem:triv-fill-canonicity} implies that
  a selection L-space validating $\Ax$ gives rise to a selection
  L-frame validating $\Ax$.
\end{proof}

  We have seen four axioms that are all $\kappa_1$-persistent.
  But there might be other ways to extend a topological selection function to
  a selection function that preserves the axioms. For example, if
  $\topo{X} = (X, \ftop, \fmeet, s, \tau)$ is a selection L-space we could
  define
  \begin{equation*}
    s_r(x, a) = \left\{
      \begin{array}{ll}
        s(x, a) &\text{if $a \in \Fclp\topo{X}$} \\
        a &\text{otherwise}
      \end{array}\right.
  \end{equation*}
  and define $\kappa_r\topo{X} = (X, \ftop, \fmeet, s_r)$.
  Then $\axref{ax:refl}$, $\axref{ax:cond}$ and $\axref{ax:veq}$ are
  $\kappa_r$-persistent as well.
  
  In the next subsection we will see some axioms that are persistent
  for any transformation from selection L-spaces to selection L-frames
  obtained by extending the selection function and forgetting the topology.

\subsection{Fill-in agnostic axioms}\label{subsec:fill-in-agn}

  We say that an axiom is \emph{all-persistent} if its validity is preserved by
  any extension of a topological selection function of a selection L-space to a
  selection function for the underlying L-frame. Such axioms are nice because
  they can be combined with other axioms that require more specialised fill-ins.
  Moreover, it turns out that every $\cto$-free axiom is all-persistent.
  This affords us the possibility to add, for example, distributivity or
  modularity to a large range of logics.
  
  We consider the following axioms and class of axioms:
  \begin{multicols}{2}
  \begin{enumerate}\itemsep=0pt
    \renewcommand{\labelenumi}{(\theenumi) }
    \renewcommand{\theenumi}{$\mathsf{det}$}
    \item \label{ax:det}
          $(\top \cto p) \cp p$
    \renewcommand{\theenumi}{$\mathsf{expl}$}
    \item \label{ax:expl}
          $\top \cp (\bot \cto p)$
    \renewcommand{\theenumi}{$\mathsf{prop}$}
    \item \label{ax:prop}
          any $\cto$-free consequence pair
  \end{enumerate}
  \end{multicols}

\begin{proposition}\label{prop:agnostic-fill-corr}
  Let $\mo{G} = (X, \ftop, \fmeet, s, A)$ be a general conditional L-frame.
  Then $\mo{G}$ validates $\axref{ax:det}$ if and only if it satisfies
  \begin{enumerate}\itemsep=0pt
    \renewcommand{\labelenumi}{\textup{(\theenumi)} }
          \renewcommand{\theenumi}{\textup{\ref{ax:det}-corr}}
    \item \label{eq:det-corr}
          \makebox[12em][l]{$s(x, X) \subseteq a$ implies $x \in a$}
          for all $x \in X$ and $a \in A$;
  \end{enumerate}
  and $\mo{G}$ validates $\axref{ax:expl}$ if and only if it satisfies
  \begin{enumerate}\itemsep=0pt
    \renewcommand{\labelenumi}{\textup{(\theenumi)} }
          \renewcommand{\theenumi}{\textup{\ref{ax:expl}-corr}}
    \item \label{eq:expl-corr}
          \makebox[12em][l]{$s(x, \{ \ftop \}) = \{ \ftop \}$} for all $x \in X$.
  \end{enumerate}
  Moreover, if $\mo{G}$ is full or descriptive then it validates
  $\axref{ax:det}$ if and only if it satisfies
  \begin{enumerate}\itemsep=0pt
    \renewcommand{\labelenumi}{\textup{(\theenumi)} }
          \renewcommand{\theenumi}{\textup{\ref{ax:det}-corr$'$}}
    \item \label{eq:det-corr-fd}
          \makebox[12em][l]{$x \in s(x, X)$} for all $x \in X$.
  \end{enumerate}
\end{proposition}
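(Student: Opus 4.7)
The strategy is to read each frame condition directly off the truth definition, exploiting $\llb \top \rrb = X$ and $\llb \bot \rrb = \{\ftop\}$.

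For \axref{ax:det}, in any general model one has
$\llb \top \cto p \rrb = \{x \in X \mid s(x, X) \subseteq \llb p \rrb\}$,
so validity of $(\top \cto p) \cp p$ in $\mo{G}$ amounts to asking that $\{x \mid s(x, X) \subseteq a\} \subseteq a$ for every admissible $a$; this is precisely \eqref{eq:det-corr}. The forward direction instantiates an admissible valuation by $V(p) = a$, and the reverse direction is immediate from the semantics.

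For \axref{ax:expl}, similarly $\llb \bot \cto p \rrb = \{x \mid s(x, \{\ftop\}) \subseteq \llb p \rrb\}$, and validity of $\top \cp (\bot \cto p)$ requires $s(x, \{\ftop\}) \subseteq a$ for every $x \in X$ and every $a \in A$. Because $\{\ftop\} \in A$ by the definition of a general frame, specialising to $a = \{\ftop\}$ forces $s(x, \{\ftop\}) \subseteq \{\ftop\}$; the reverse inclusion is automatic because $s(x, \{\ftop\})$ is a filter and hence contains $\ftop$. Conversely, if $s(x, \{\ftop\}) = \{\ftop\}$, then it lies inside every admissible filter, since every filter contains $\ftop$.

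The remaining equivalence needs a bit more care. The implication \eqref{eq:det-corr-fd} $\Rightarrow$ \eqref{eq:det-corr} is one line: if $x \in s(x, X)$ and $s(x, X) \subseteq a$, then $x \in a$. For the converse in the \emph{full} case, every filter of $(X, \ftop, \fmeet)$ is admissible, so plugging $a = s(x, X) \in A$ into \eqref{eq:det-corr} yields $x \in s(x, X)$. The \emph{descriptive} case is the main obstacle: admissible filters are only the clopen filters of the underlying selection L-space $\topo{X}$, while $s(x, X)$ is only guaranteed to be closed. My plan is to argue by contradiction: if $x \notin s(x, X)$, use the fact that in an L-space every closed filter is the intersection of the clopen filters containing it (a consequence of the duality in Theorem~\ref{thrm:weakduality}, together with HMS separation and compactness) to produce a clopen filter $a \in \Fclp(\topo{X})$ with $s(x, X) \subseteq a$ and $x \notin a$, which contradicts \eqref{eq:det-corr}. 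Pinning down this separation property of L-spaces is the key step; once it is in hand, the equivalence follows.
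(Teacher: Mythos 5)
Your proposal is correct and follows essentially the same route as the paper: read the correspondents off the truth clauses by instantiating $V(p)$ with the relevant admissible filter, and for the descriptive case of \eqref{eq:det-corr-fd} separate $x$ from the closed filter $s(x,X)$ by a clopen filter. The separation step you flag as needing to be pinned down is exactly what the paper also invokes (via compactness of the L-space), at the same level of detail.
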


\begin{lemma}\label{lem:fill-in-agn}
  The axioms $\axref{ax:det}$, $\axref{ax:expl}$ and $\axref{ax:prop}$
  are all-consistent.
\end{lemma}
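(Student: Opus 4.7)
The plan is to handle the three items separately, since each calls for a different idea.

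For \axref{ax:det} and \axref{ax:expl} I would invoke the correspondence conditions from Proposition~\ref{prop:agnostic-fill-corr}. The key observation is that both conditions restrict the selection function only at an argument that is automatically a clopen filter of any L-space: the whole space $X$ is the top of the lattice $\Fclp(\topo{X})$, and the singleton $\{\ftop\}$ is its bottom (being $\theta_{\amo{A}}(\bot)$ in the dual picture). Hence for any fill-in $s'$ of the topological selection function $s$ we have $s'(x, X) = s(x, X)$ and $s'(x, \{\ftop\}) = s(x, \{\ftop\})$ for every $x$. Since $\topo{X}$ is descriptive and $\kappa\topo{X}$ is full, both enjoy the equivalence of \eqref{eq:det-corr} with \eqref{eq:det-corr-fd}, so the simpler condition $x \in s(x, X)$ transfers verbatim from $\topo{X}$ to $\kappa\topo{X}$. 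The same idea gives the \axref{ax:expl} case using \eqref{eq:expl-corr} directly.

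The case of \axref{ax:prop} rests on a different observation: a $\cto$-free formula is interpreted by recursion using only the semilattice operations and the valuation, so its truth set does not depend on the selection function at all. Consequently, for any fill-in, the truth set of a $\cto$-free formula in $\kappa\topo{X}$ under a valuation $V$ coincides with the truth set in the underlying L-frame $\mo{X}$ under $V$. This reduces the task to showing that if $\topo{X} \Vdash \phi \cp \psi$ under all clopen valuations, then $\mo{X} \Vdash \phi \cp \psi$ under all filter valuations. Via Lemma~\ref{lem:gen-cpx-summary} I would recast this as a lattice-theoretic statement, passing from $\Fclp(\topo{X})$ to $\Filt(\mo{X})$, and close the gap using the HMS separation axiom to express an arbitrary filter of $\mo{X}$ as the intersection of the clopen filters containing it.

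The main obstacle is this last step in the \axref{ax:prop} case. The naive approximation from clopen valuations to arbitrary filter valuations would go through if the filter join $\genFil$ commuted with arbitrary intersections, but it does not in general---this is precisely where non-distributivity becomes relevant. I expect the argument to require a careful inductive analysis of the interpretation clauses rather than a direct limit argument. The cases \axref{ax:det} and \axref{ax:expl} are, by contrast, immediate from Proposition~\ref{prop:agnostic-fill-corr} once one notices the clopenness of $X$ and $\{\ftop\}$.
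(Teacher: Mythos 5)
Your treatment of \axref{ax:det} and \axref{ax:expl} is correct and is essentially the paper's argument: $X$ and $\{\ftop\}$ are clopen filters of every selection L-space, so any fill-in $\hat{s}$ agrees with $s$ at these arguments, and the correspondence conditions \eqref{eq:det-corr-fd} and \eqref{eq:expl-corr} (which only constrain the selection function at $X$ and $\{\ftop\}$) therefore transfer verbatim from the space to any fill-in.

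The \axref{ax:prop} case, however, contains a genuine gap, and it is exactly the one you flag yourself. Observing that the interpretation of $\cto$-free formulas ignores the selection function correctly reduces the claim to: if every clopen valuation on the L-space $\topo{X}$ validates $\phi \cp \psi$, then every filter valuation on the underlying L-frame does. This is \emph{not} an instance of passing validity from a lattice to a sublattice (it goes the wrong way: $\Fclp(\topo{X})$ is a sublattice of $\Filt(X)$, and validity of an inequality on a subalgebra does not imply validity on the whole algebra), so some persistence argument is genuinely needed. Your proposed route --- writing an arbitrary filter as the intersection of the clopen filters containing it --- has two problems: as you note, $\genFil$ does not commute with arbitrary intersections, and moreover an arbitrary filter of the semilattice need not be such an intersection in the first place (only \emph{closed} filters are guaranteed to be, via HMS separation and compactness). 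The "careful inductive analysis" you defer to is the actual content of this case; the paper does not carry it out either, but discharges it by citing \cite[Lemma~3.21]{BezEA24}, which establishes precisely this d-persistence of $\cto$-free consequence pairs for weak positive logic. Without either that citation or a worked-out replacement for it, your proof of the \axref{ax:prop} case is incomplete.
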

\begin{proof}
  Let $\topo{X} = (X, \ftop, \fmeet, s, \tau)$ be a selection L-space
  and let $\hat{s}$ be a selection function for $(X, \ftop, \fmeet)$
  such that $\hat{s}(x, a) = s(x, a)$ for all $x \in X$ and
  $a \in \Fclp(\topo{X})$.
  Write $\hat{\kappa}\topo{X} = (X, \ftop, \fmeet, \hat{s})$.
  If $\topo{X}$ validates $\axref{ax:det}$, then it satisfies
  $x \in s(x, X)$ for all $x \in X$.
  Since $X$ is a clopen filter of $\topo{X}$,
  we have $\hat{s}(x, X) = s(x, X) \ni x$ for all $x \in X$,
  so $\hat{\kappa}\topo{X}$ satisfies \eqref{eq:det-corr-fd}
  hence validates $\axref{ax:det}$.
  The case for $\axref{ax:expl}$ is similar, making use of the fact that
  $\{ 1 \}$ is a clopen filter by definition.
  
  The case for $\axref{ax:prop}$ follows from~\cite[Lemma~3.21]{BezEA24}
  and the fact that the interpretation of $\cto$-free formulas does not
  rely on the (topological) selection function.
\end{proof}

\begin{theorem}\label{thm:fill-in-agn}
  Let $\Ax \subseteq \{ \axref{ax:refl}, \axref{ax:cond}, \axref{ax:veq},
  \axref{ax:cs}, \axref{ax:det}, \axref{ax:expl}, \axref{ax:prop} \}$.
  Then $\log{CL}(\Ax)$ is sound and complete with respect to the class
  of conditional L-frames that validate $\log{ax}$
  for each $\log{ax} \in \Ax$.
\end{theorem}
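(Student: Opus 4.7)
The plan is to combine the algebraic completeness (Theorem~\ref{thm:alg-sound-compl}), the duality (Theorem~\ref{thrm:duality}), and the preservation lemmas of this subsection (Lemmas~\ref{lem:triv-fill-canonicity} and~\ref{lem:fill-in-agn}) by running the same argument as Theorem~\ref{thm:fill-1-compl} with the top fill-in $\kappa_1$ from Definition~\ref{def:top-fill-in}. The key observation is that $\kappa_1$ is a \emph{single} fill-in that serves every axiom on the list at once: Lemma~\ref{lem:triv-fill-canonicity} tells us it preserves validity of \axref{ax:refl}, \axref{ax:cond}, \axref{ax:veq}, \axref{ax:cs}, while Lemma~\ref{lem:fill-in-agn} says that \axref{ax:det}, \axref{ax:expl}, \axref{ax:prop} are preserved by \emph{any} extension of the topological selection function, in particular by $\kappa_1$.

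For soundness, I would appeal to Theorem~\ref{thm:gen-frm-sound} (through its specialisation to selection L-frames in Theorem~\ref{thrm:framesound}), using the correspondence results Propositions~\ref{prop:triv-fill-corr} and~\ref{prop:agnostic-fill-corr} to confirm that a selection L-frame satisfying the chosen correspondence conditions indeed validates each $\log{ax} \in \Ax$. This is routine: the conditions are stated precisely so that they characterise validity of the axioms on general frames, and full frames are a special case of general frames.

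For completeness, I would assume $\phi \not\vdash_\Ax \psi$. By Theorem~\ref{thm:alg-sound-compl} there is a conditional lattice $\amo{A}$ that validates every consequence pair in $\Ax$ but refutes $\phi \cp \psi$. Applying the duality (Theorem~\ref{thrm:duality}), the selection L-space $\topo{X} = \Ftop(\amo{A})$ satisfies $\Fclp(\topo{X}) \cong \amo{A}$; by Lemma~\ref{lem:gen-cpx-summary}\eqref{it:gen-cpx-validity}, viewed as a descriptive general frame, $\topo{X}$ validates $\Ax$ and refutes $\phi \cp \psi$. Now form $\kappa_1\topo{X}$. Lemma~\ref{lem:kappa1} gives $\kappa_1\topo{X} \not\Vdash \phi \cp \psi$, and combining Lemmas~\ref{lem:triv-fill-canonicity} and~\ref{lem:fill-in-agn} shows $\kappa_1\topo{X} \Vdash \Ax$, so $\kappa_1\topo{X}$ is the required refuting selection L-frame.

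The only delicate point is making sure that a single fill-in works simultaneously for all axioms in $\Ax$, so that the one selection L-frame $\kappa_1\topo{X}$ witnesses validity of every axiom and refutation of $\phi \cp \psi$. This is exactly what the phrase ``all-persistent'' buys us for $\{\axref{ax:det},\axref{ax:expl},\axref{ax:prop}\}$: the preservation proofs in Lemma~\ref{lem:fill-in-agn} go through for any extension of the topological selection function, in particular for $\kappa_1$. So there is no tension between the two persistence regimes, and the argument composes cleanly. The main obstacle, if any, is cosmetic: one must check that the correspondence conditions used on the full frame side and on the descriptive side match up, which is handled uniformly by the equivalences recorded at the end of Propositions~\ref{prop:triv-fill-corr} and~\ref{prop:agnostic-fill-corr}.
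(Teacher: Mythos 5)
Your proposal is correct and follows essentially the same route as the paper: the paper's proof simply says the argument is as in Theorem~\ref{thm:fill-1-compl} (soundness from the correspondence results, completeness via algebraic completeness, duality, and the $\kappa_1$ fill-in), invoking Lemma~\ref{lem:fill-in-agn} for the three new axioms exactly because all-persistence makes them compatible with the $\kappa_1$ fill-in already used for the first four. Your explicit remark that a single fill-in must witness all axioms simultaneously is precisely the (unstated) point the paper relies on.
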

\begin{proof}
  This is similar to Theorem~\ref{thm:fill-1-compl},
  using Lemma~\ref{lem:fill-in-agn} for the new axioms.
\end{proof}

  Like Theorem~\ref{thm:fill-1-compl}, Theorem~\ref{thm:fill-in-agn} can also
  be formulated in terms of the a class of frames satisfying
  ($\log{ax}$-corr) for each $\log{ax} \in \Ax$,
  where for each $\cto$-free axiom has a frame
  correspondent~\cite[Theorem~3.32]{BezEA24}.

\subsection{Sub-sub-intuitionistic logic versus intuitionistic conditional logic}
\label{subsec:ssi-vs-int-cond}

  We use Theorem~\ref{thm:fill-in-agn} to investigate the extension of
  $\log{CL}$ with a specific instance of $\axref{ax:prop}$,
  namely the distributivity axiom
  $p \wedge (q \vee r) \cp (p \wedge q) \vee (p \wedge r)$.
  This gives rise to a minimal system of conditional logic over a
  \emph{distributive} positive base, so we will call the resulting
  logic $\log{PCL}$ for positive conditional logic.
  This logic is of particular interest because of the way it relates
  to the intuitionistic conditional logic $\log{ICK}$,
  introduced by Weiss~\cite{Wei19a}:
  intuitionistic conditional logic is a conservative extension of $\log{PCL}$.
  Before proceeding, we recall the definition of
  intuitionistic conditional logic.

\begin{definition}
  Let $\mathbf{Int}_{\cto}(\Prop)$ be the language given by the grammar
  \begin{equation*}
    \phi ::= p \in \Prop \mid \top \mid \bot \mid \phi \wedge \phi \mid \phi \vee \phi \mid \phi \to \phi \mid \phi \cto \phi,
  \end{equation*}
  where $\Prop$ is some arbitrary but fixed set of proposition letters.
  Intuitionistic conditional logic~\cite{Wei19a} is the logic
  $\log{ICK}$ obtained by extending an axiomatisation of intuitionistic
  logic with the axioms
  \begin{align*}
    p \cto \top, && (p \cto (q \wedge r)) \to\ ((p \cto q) \wedge (p \cto r), \\
    && ((p \cto q) \wedge (p \cto r) \to\ (p \cto (q \wedge r)),
  \end{align*}
  the congruence rules, and closing it under modus ponens and 
  uniform substitution.
  The algebraic semantics of $\log{ICK}$ is given by Heyting algebras
  with a binary operator $\cto$ satisfying the condition above.
  These are called \emph{conditional Heyting algebras}.
\end{definition}

  We assume that $\log{PCL}$ and $\log{ICK}$ are defined over the same set
  of proposition letters.
  In order to describe the frames for $\log{PCL}$, we define what it means
  for a semilattice to be distributive.

\begin{definition}\label{def:Lfrm-distr}
  A semilattice $(X, \ftop, \fmeet)$ is called \emph{distributive}
  if for all $x, y, z \in X$ such that $x \fmeet y \fleq z$ we can find
  $u, v \in X$ such that $x \fleq u$ and $y \fleq v$ and $z = u \fmeet v$.
\end{definition}

  Now as a consequence of~\cite[Example~3.33]{BezEA24}
  and Theorem~\ref{thm:fill-in-agn} we have:

\begin{theorem}
  The logic $\log{PCL}$ is sound and complete with respect to the
  class of distributive selection L-frames.
\end{theorem}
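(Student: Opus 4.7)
The plan is to recognise $\log{PCL}$ as a special case of $\log{CL}(\Ax)$ and then assemble two ingredients already established in the paper. First, by definition, $\log{PCL}$ is obtained by adding the single consequence pair
\begin{equation*}
  p \wedge (q \vee r) \cp (p \wedge q) \vee (p \wedge r)
\end{equation*}
to $\log{CL}$. Since this consequence pair does not mention $\cto$, it falls under the axiom scheme $\axref{ax:prop}$. Applying Theorem~\ref{thm:fill-in-agn} to $\Ax = \{\axref{ax:prop}\}$ (with this particular instance) immediately gives soundness and completeness of $\log{PCL}$ with respect to the class of selection L-frames that validate the distributivity axiom.

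The second step is to match this class with the class of \emph{distributive} selection L-frames in the sense of Definition~\ref{def:Lfrm-distr}. Here I would invoke the correspondence result~\cite[Example~3.33]{BezEA24}, which states precisely that an L-frame $(X,\ftop,\fmeet)$ validates the distributivity consequence pair if and only if it is distributive in the sense of Definition~\ref{def:Lfrm-distr}. Because the distributivity axiom is $\cto$-free, its truth in a selection L-model depends only on the underlying L-frame and its valuation, not on the selection function; so this correspondence transports verbatim from L-frames to selection L-frames.

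Putting the two steps together yields the theorem. I do not anticipate any real obstacle, since the heavy lifting has already been done: Lemma~\ref{lem:fill-in-agn} handles the fact that $\cto$-free axioms are all-persistent (so the fill-in used in the proof of Theorem~\ref{thm:fill-in-agn} does not need adjustment), and~\cite[Example~3.33]{BezEA24} handles the semilattice-level correspondence. The only thing one has to be a little careful about is making clear that the frame condition extracted via Theorem~\ref{thm:fill-in-agn} is exactly the frame condition of Definition~\ref{def:Lfrm-distr}, but this is immediate from the cited example.
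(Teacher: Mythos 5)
Your proposal is correct and follows exactly the route the paper takes: the paper states this theorem as an immediate consequence of Theorem~\ref{thm:fill-in-agn} (applied to the $\cto$-free distributivity instance of $\axref{ax:prop}$) together with the frame correspondence for distributivity from \cite[Example~3.33]{BezEA24}. Your additional remark that validity of a $\cto$-free axiom depends only on the underlying L-frame is the right justification for transporting that correspondence to selection L-frames, and nothing further is needed.
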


  We aim to prove that $\log{ICK}$ is a conservative extension
  of $\log{PCL}$, in the sense that a consequence pair $\phi \cp \psi$ is
  derivable in $\log{PCL}$ if and only if the formula $\phi \to \psi$ is
  derivable in $\log{ICK}$.
  We prove this algebraically by making use of the fact that the complex algebra of any
  distributive selection L-frame can be equipped with a Heyting implication.
  This yields a conditional Heyting algebra~\cite[Definition~3]{Wei19a}.

\begin{lemma}\label{lem:dist-cpx-frm}
  Let $\mo{X} = (X, \ftop, \fmeet, s)$ be a distributive selection L-frame.
  Then the complex algebra $\mo{X}^+$ satisfies the
  frame distributivity law, that is, we have
  \begin{equation*}
    p \cap \bigGenFil \{ q_i \mid i \in I \}
      = \bigGenFil \{ p \cap q_i \mid i \in I \}
  \end{equation*}
  for every filter $p \in \mo{X}^+$ and
  family of filters $\{ q_i \mid i \in I \} \subseteq \mo{X}^+$.
\end{lemma}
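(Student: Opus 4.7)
The plan is to prove the two inclusions separately. The inclusion $\supseteq$ is the easy one: for each $i \in I$ we have $p \cap q_i \subseteq p$ and $p \cap q_i \subseteq q_i \subseteq \bigGenFil\{q_j \mid j \in I\}$, so $p \cap q_i \subseteq p \cap \bigGenFil\{q_j \mid j \in I\}$. Since the right-hand side is a filter and $\bigGenFil\{p \cap q_i \mid i \in I\}$ is the \emph{smallest} filter containing all the $p \cap q_i$, the desired inclusion follows.

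For the non-trivial inclusion $\subseteq$, fix $x \in p \cap \bigGenFil\{q_i \mid i \in I\}$. Unfolding the description of $\bigGenFil$ given just after Definition~2.5, there exist indices $i_1, \ldots, i_n \in I$ and elements $y_k \in q_{i_k}$ such that $y_1 \fmeet \cdots \fmeet y_n \fleq x$. The goal is to produce $z_1, \ldots, z_n$ with $z_k \in p \cap q_{i_k}$ and $z_1 \fmeet \cdots \fmeet z_n \fleq x$, which will exhibit $x$ as an element of $\bigGenFil\{p \cap q_j \mid j \in I\}$.

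I will argue by induction on $n$. The base case $n = 1$ is immediate: $y_1 \fleq x$ and $y_1 \in q_{i_1}$ give $x \in q_{i_1}$ by upward closure of $q_{i_1}$, so $x \in p \cap q_{i_1}$ and we take $z_1 = x$. For the inductive step, apply the distributivity property (Definition~5.5) to $y_1 \fmeet (y_2 \fmeet \cdots \fmeet y_n) \fleq x$: this yields $u, v \in X$ with $y_1 \fleq u$, $y_2 \fmeet \cdots \fmeet y_n \fleq v$, and $x = u \fmeet v$. Since $x = u \fmeet v$ forces $x \fleq u$ and $x \fleq v$, both $u$ and $v$ belong to the filter $p$. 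Moreover $u \in q_{i_1}$ by upward closure of $q_{i_1}$, so $u \in p \cap q_{i_1}$. Applying the induction hypothesis to $v \in p$ and $y_2 \fmeet \cdots \fmeet y_n \fleq v$, we obtain $z_2, \ldots, z_n$ with $z_k \in p \cap q_{i_k}$ and $z_2 \fmeet \cdots \fmeet z_n \fleq v$. Setting $z_1 = u$ yields $z_1 \fmeet z_2 \fmeet \cdots \fmeet z_n \fleq u \fmeet v = x$, completing the induction.

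The only real subtlety is making sure that the ``witnesses'' $u, v$ produced by distributivity automatically lie in the ambient filter $p$; this is what allows the induction hypothesis to be applied to $v$ with the same filter $p$. Once one notices that the definition of distributive semilattice actually forces $z = u \fmeet v$ (as opposed to merely $z \fleq u \fmeet v$), this lies directly above $x$ in the order and membership in $p$ comes for free. No other subtleties are expected.
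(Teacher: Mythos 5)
Your proof is correct and follows essentially the same route as the paper: both establish $\supseteq$ by the upper-bound argument and prove $\subseteq$ by reducing to a finite family of witnesses and invoking distributivity of the semilattice. The only difference is that you explicitly carry out, by induction on the number of witnesses, the finite distributivity step that the paper compresses into the one-line assertion that distributivity of $\mo{X}$ implies distributivity of $\mo{X}^+$; your observation that the witnesses $u,v$ lie above $x$ (because $x = u \fmeet v$ exactly) and hence belong to $p$ is precisely the content of that assertion.
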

\begin{proof}
  We have $q_i \subseteq \bigGenFil \{ q_i \mid i \in I \}$ for all $i \in I$,
  so that each $p \cap q_i$ is contained in
  $p \cap \bigGenFil \{ q_i \mid i \in I \}$.
  Therefore $p \cap \bigGenFil \{ q_i \mid i \in I \}$ is an upper bound for
  the disjunction on the right hand side, which proves the right-to-left inclusion.
  
  For the converse, suppose $x \in p \cap \bigGenFil \{ q_i \mid i \in I \}$.
  Then $x \in p$ and $x \in \bigGenFil \{ q_i \mid i \in I \}$, and by definition
  of $\bigGenFil$ we can find a finite $I' \subseteq I$ such that
  $x \in \bigGenFil \{ q_i \mid i \in I' \}$ (see e.g.~\cite[Section~2.1]{BezEA24}).
  Distributivity of $\mo{X}$ implies distributivity of $\mo{X}^+$, so that we find
  \begin{equation*}
    x \in p \cap \bigGenFil \{ q_i \mid i \in I' \}
      = \bigGenFil \{ p \cap q_i \mid i \in I' \}
      \subseteq \bigGenFil \{ p \cap q_i \mid i \in I \}.
  \end{equation*}
  This proves the inclusion from left to right.
\end{proof}

\begin{theorem}
  Let $\phi, \psi \in \mathbf{CL}$ be two formulas and $\phi \cp \psi$ a
  consequence pair.
  Then
  \begin{equation*}
    \log{PCL} \vdash \phi \cp \psi
      \iff \log{ICK} \vdash \phi \to \psi.
  \end{equation*}
\end{theorem}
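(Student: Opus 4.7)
The plan is to prove both directions separately, with the forward implication being essentially bureaucratic and the backward implication leveraging the algebraic semantics we have built.

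For the forward direction, I would proceed by induction on the length of a $\log{PCL}$-derivation of $\phi \cp \psi$, showing that under the translation $\cp \mapsto\ \to$, every axiom of $\log{PCL}$ becomes a theorem of $\log{ICK}$ and every rule becomes an admissible rule of $\log{ICK}$. All the lattice axioms together with the distributivity instance of $\axref{ax:prop}$ translate to standard intuitionistic theorems; transitivity corresponds to composition of implications under modus ponens; the three conditional axioms \emph{modal top}, \emph{monotonicity}, \emph{normality} are built into $\log{ICK}$ verbatim (with $\to$ in place of $\cp$); and the two congruence rules of $\log{PCL}$ coincide with the congruence rules of $\log{ICK}$.

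For the backward direction I argue contrapositively and semantically. Suppose $\log{PCL} \not\vdash \phi \cp \psi$. By completeness of $\log{PCL}$ with respect to distributive selection L-frames, there exists a distributive selection L-frame $\mo{X} = (X, \ftop, \fmeet, s)$ and a valuation $V$ such that $(\mo{X}, V) \not\Vdash \phi \cp \psi$. By Lemma~\ref{lem:dist-cpx-frm}, the complex algebra $\mo{X}^+$ is a complete lattice satisfying the frame distributivity law. It is a classical fact that any such lattice is a complete Heyting algebra, with Heyting implication given by
\begin{equation*}
  p \to q := \bigGenFil \{ r \in \mo{X}^+ \mid r \cap p \subseteq q \}.
\end{equation*}
Combining this with the operator $\dto$ already present on $\mo{X}^+$ turns $\mo{X}^+$ into a conditional Heyting algebra in the sense of~\cite{Wei19a}, since $\dto$ satisfies the two defining equations from~\eqref{eq:cond-lattice} by the complex-algebra construction.

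Because $\phi, \psi \in \mathbf{CL}$ contain no occurrences of $\to$, their interpretations in $\mo{X}^+$ as a conditional lattice and as a conditional Heyting algebra coincide. By Lemma~\ref{lem:cpx-summary} applied to the complex algebra together with the assignment induced by $V$, we have $\llp \phi \rrp \not\leq \llp \psi \rrp$ in $\mo{X}^+$, which is equivalent to $\llp \phi \to \psi \rrp \neq \top$ by the defining property of Heyting implication. Algebraic soundness of $\log{ICK}$ with respect to conditional Heyting algebras then yields $\log{ICK} \not\vdash \phi \to \psi$. The main obstacle is precisely the step that promotes the complex algebra to a conditional Heyting algebra; this rests on Lemma~\ref{lem:dist-cpx-frm} together with the standard equivalence between frame distributivity and being a complete Heyting algebra, after which the completeness-of-$\log{PCL}$ and soundness-of-$\log{ICK}$ components slot in routinely.
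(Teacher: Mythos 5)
Your proposal is correct, and the backward direction is essentially the paper's own argument: contrapositive, completeness of $\log{PCL}$ with respect to distributive selection L-frames, Lemma~\ref{lem:dist-cpx-frm} to show the complex algebra is a frame and hence a (complete) Heyting algebra, promotion to a conditional Heyting algebra, residuation to turn $\llp \phi \rrp \not\leq \llp \psi \rrp$ into $\llp \phi \to \psi \rrp \neq \top$, and soundness of $\log{ICK}$. Where you genuinely diverge is the forward direction. The paper argues semantically: from $\log{PCL} \vdash \phi \cp \psi$ it gets validity of $\phi \cp \psi$ on all distributive conditional lattices, specialises to conditional Heyting algebras, and then invokes \emph{completeness} of $\log{ICK}$ with respect to conditional Heyting algebras (Weiss's Theorem~5) to conclude $\log{ICK} \vdash \phi \to \psi$. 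You instead give a syntactic induction on $\log{PCL}$-derivations, checking that each axiom and rule translates to a theorem or admissible rule of $\log{ICK}$ under $\cp \mapsto \to$. Your route is more self-contained on this direction (it needs nothing about the semantics of $\log{ICK}$, only its closure properties), at the cost of a case analysis over the axiomatisation; the paper's route is shorter but leans on the completeness theorem for $\log{ICK}$, which is a heavier external input. Both are sound, and the translation checks you list (lattice axioms and distributivity as intuitionistic theorems, transitivity as composition under modus ponens, the conditional axioms and congruence rules matching those of $\log{ICK}$) are the right ones.
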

\begin{proof}
  If $\log{PCL} \vdash \phi \cp \psi$, then $\phi \cp \psi$ is valid on
  all distributive conditional lattices. In particular, this means that
  for every distributive conditional lattice $\amo{A}$ and every assignment
  $\sigma$ of the proposition letters, we have
  $\llp \phi \rrp_{(\amo{A}, \sigma)} \leq \llp \psi \rrp_{(\amo{A}, \sigma)}$.
  In particular, this holds for all conditional Heyting algebras,
  which implies that $\phi \to \psi$ is valid on all conditional Heyting algebras.
  Therefore we have $\log{ICK} \vdash \phi \to \psi$~\cite[Theorem~5]{Wei19a}.
  
  For the converse direction, we argue by contrapositive.
  Suppose that $\phi \cp \psi$ is not derivable in $\log{PCL}$.
  Then there exists a distributive selection L-frame $\mo{X}$ that does not
  validate it. Therefore its complex algebra $\mo{X}^+$ does not validate
  $\phi \cp \psi$, so there exists an assignment $\sigma$ of the proposition
  letters to the complex algebra such that
  $\llp \phi \rrp_{(\mo{X}^+, \sigma)} \not\leq \llp \psi \rrp_{(\mo{X}^+, \sigma)}$.
  As a consequence of Lemma~\ref{lem:dist-cpx-frm} the distributive lattice
  underlying $\mo{X}^+$ is a frame, hence carries a Heyting algebra
  structure~\cite[Proposition 3.10.2]{Vic89}.
  Therefore $\mo{X}^+$ is a conditional Heyting algebra.
  The residuation property then implies that
  $\top_{\mo{X}^+} \not\leq \llp \phi \rrp \to \llp \psi \rrp = \llp \phi \to \psi \rrp$,
  so that $\mo{X}^+$ is a conditional Heyting algebra invalidating $\phi \to \psi$.
  This implies that $\phi \to \psi$ is not derivable in $\log{ICK}$.
\end{proof}

\subsection{Two more extensions and the principal fill-in}\label{subsec:non-top}

  We now consider the extension of $\log{CL}$ with any of the following
  axioms:
  \begin{multicols}{2}
  \begin{enumerate}\itemsep=0pt
    \renewcommand{\labelenumi}{(\theenumi) }
    \renewcommand{\theenumi}{$\mathsf{pnp}$}
    \item \label{ax:pnp}
          $p \wedge (p \cto \bot) \cp \bot$
    \renewcommand{\theenumi}{$\mathsf{mp}$}
    \item \label{ax:mp}
          $p \wedge (p \cto q) \cp q$
  \end{enumerate}
  \end{multicols}
  \noindent
  The axiom $\axref{ax:pnp}$ reflects the idea that whenever $p$ is true at some world,
  then it can not at the same time imply falsum. Intuitively this means
  that $p \cto \bot$ behaves somewhat like a negation of $p$, albeit a very
  weak one.
  The axiom $\axref{ax:mp}$ can be viewed as a local version of modus
  ponens.
  
  From a technical point of view these axioms are interesting because they
  are not $\kappa_1$-persistent. This is illustrated by the following example.
  
\begin{example}\label{exm:kup-no-k1}
  Let $\mb{N}_{\infty} = \mb{N} \cup \{ \infty \}$ be the set of natural
  numbers with an additional point $\infty$, and let $\tau$ be the
  collection of subsets of $\mb{N}$ together with the cofinite
  subsets of $\mb{N}_{\infty}$ that contain $\infty$.
  Then $(\mb{N}_{\infty}, \tau)$ is compact (it is the one-point compactification
  of $\mb{N}$ equipped with the discrete topology.)
  
  Order $\mb{N}_{\infty}$ by the reverse natural ordering, that is,
  we let $n \fleq m$ if $n = \infty$ or $n, m \in \mb{N}$ and $m \leq n$.
  Then $\mb{N}_{\infty}$ forms a semilattice where
  $x \fmeet y$ is simply the maximum of the two.
  It is easy to see that $(\mb{N}_{\infty}, 0, \fmeet)$ is a distributive
  semilattice.
  The clopen filters of $(\mb{N}_{\infty}, 0, \fmeet, \tau)$ are
  precisely the subsets of the form
  ${\uparrow}_{\fleq}n := \{ x \in \mb{N}_{\infty} \mid n \fleq x \} = \{ 0, \ldots, n \}$.
  As a consequence, the collection of clopen filters is closed
  under $\genFil$, because $({\uparrow}_{\fleq}n) \genFil ({\uparrow}_{\fleq}m) = {\uparrow}_{\fleq}(n \fmeet m)$.
  Finally, $(\mb{N}_{\infty}, 0, \fmeet, \tau)$ satisfies the
  HMS separation axiom because if $n \not\fleq m$ then
  ${\uparrow}_{\fleq}n$ is a clopen filter containing $n$ but not $m$.
  So $(\mb{N}_{\infty}, 0, \fmeet, \tau)$ is an L-space.
  
  Define a selection function $s$ by
  $s(n, a) = {\uparrow}n$ for all $n \in \mb{N}_{\infty}$ and clopen filters $a$.
  Then the conditional L-space validates \axref{ax:pnp} and \axref{ax:mp}
  because it satisfies the correspondence conditions from
  Proposition~\ref{prop:refl-mp-veq-corr}.
  However, if we transform this to a conditional L-frame
  $(\mb{N}_{\infty}, 0, \fmeet, s_1)$, using the selection function
  $s_1$ as defined in Lemma~\ref{lem:triv-fill-canonicity} then this is no
  longer the case.
  For example, take the number $2 \in \mb{N}_{\infty}$ and
  the (non-clopen) filter $\mb{N} \subseteq \mb{N}_{\infty}$.
  Then we have $2 \in \mb{N}$ while $2 \notin \{ 0 \} = s_1(2, \mb{N})$.
\end{example}
  
  Another reason for extending $\log{CL}$ with the axioms above
  is that they give rise to logics that
  lie between $\log{CL}$ and intuitionistic logic.
  In fact, they are closely related to the equational definition of
  Heyting algebras~\cite[Section~II.1, Example 11]{BurSan81},
  and we shall see in the next section
  that extending $\log{CL}$ with these two axioms and $\axref{ax:refl}$
  gives rise to intuitionistic logic.
  We move on to the correspondence results.

\begin{proposition}\label{prop:refl-mp-veq-corr}
  Let $\mo{X} = (X, \ftop, \fmeet, s, A)$ be a general conditional L-frame.
  Then $\mo{X}$ validates $\log{ax} \in \{ \axref{ax:mp}, \axref{ax:veq} \}$
  if and only if it satisfies \textup{($\log{ax}$-corr)}, given by:
  \begin{enumerate}\itemsep=0pt
    \renewcommand{\labelenumi}{\textup{(\theenumi)} }
          \renewcommand{\theenumi}{\textup{\ref{ax:pnp}-corr}}
    \item \label{eq:pnp-corr}
          \makebox[17em][l]{if $x \in a$ and $s(x, a) = \{ \ftop \}$ then $x = \ftop$}
          for all $x \in X$ and $a \in A$;
          \renewcommand{\theenumi}{\textup{\ref{ax:mp}-corr}}
    \item \label{eq:mp-corr}
          \makebox[17em][l]{if $x \in a$ and $s(x,a) \subseteq b$ then $x \in b$}
          for all $x \in X$ and $a,b \in A$.
  \end{enumerate}
  Moreover, if $\mo{G}$ is full or descriptive then it validates
  $\axref{ax:mp}$ if and only if it satisfies
  \begin{enumerate}\itemsep=0pt
    \renewcommand{\labelenumi}{\textup{(\theenumi)} }
          \renewcommand{\theenumi}{\textup{\ref{ax:mp}-corr$'$}}
    \item \label{eq:mp-corr-fd}
          \makebox[17em][l]{$x \in a$ implies $x \in s(x, a)$}
          for all $x \in X$ and $a \in A$.
  \end{enumerate}
\end{proposition}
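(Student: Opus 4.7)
The plan is to verify each correspondence by routine unfolding of the semantic clauses, and then to address the refined condition~\eqref{eq:mp-corr-fd} separately in the full and descriptive cases.

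First I would unfold the interpretation of the axioms. For~\axref{ax:pnp}, for any admissible valuation $V$ we have $\llb \bot \rrb = \{\ftop\}$ and $\llb p \cto \bot \rrb = \{x \in X \mid s(x, V(p)) \subseteq \{\ftop\}\}$. Since every filter of $(X,\ftop,\fmeet)$ contains $\ftop$, the inclusion $s(x, V(p)) \subseteq \{\ftop\}$ is equivalent to equality. So validity of \axref{ax:pnp} under $V$ says: whenever $x \in V(p)$ and $s(x, V(p)) = \{\ftop\}$, then $x = \ftop$. For the forward direction of the correspondence, assume \eqref{eq:pnp-corr} and instantiate with $a = V(p) \in A$; for the converse, given $a \in A$, pick any admissible $V$ with $V(p) = a$ and read off \eqref{eq:pnp-corr} from the validity. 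An entirely analogous unfolding, using $\llb p \cto q \rrb = \{x \mid s(x, V(p)) \subseteq V(q)\}$, yields the correspondence between \axref{ax:mp} and \eqref{eq:mp-corr}, taking $a = V(p)$ and $b = V(q)$.

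Next I would prove the equivalence of \eqref{eq:mp-corr} and \eqref{eq:mp-corr-fd} in the full and descriptive cases. The direction \eqref{eq:mp-corr-fd}$\Rightarrow$\eqref{eq:mp-corr} is immediate: if $x \in a$ and $s(x,a) \subseteq b$, then $x \in s(x,a) \subseteq b$. For the converse in the full case, we have $A = \Filt(X, \ftop, \fmeet)$, and in particular $s(x, a) \in A$; applying \eqref{eq:mp-corr} with $b := s(x, a)$ gives $x \in s(x, a)$. For the descriptive case $A = \Fclp(\topo{X})$, the filter $s(x, a)$ need only be closed, so this trick does not work directly; instead I would argue by contradiction. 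Suppose $x \in a$ but $x \notin s(x, a)$. Using that in any L-space every closed filter equals the intersection of the clopen filters above it (a standard consequence of the HMS separation axiom together with compactness), we extract a clopen filter $b \supseteq s(x, a)$ with $x \notin b$. Then $a, b \in A$ witness a failure of \eqref{eq:mp-corr}.

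The main obstacle is the descriptive case of the refined condition: the naive choice $b = s(x, a)$ is no longer admissible, and we must appeal to the topological structure of L-spaces to separate $x$ from $s(x,a)$ by a clopen filter. Everything else is a direct rewriting of validity in terms of the semantic clauses.
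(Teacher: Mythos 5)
Your proposal is correct and follows essentially the same route as the paper's appendix proofs: both correspondences are obtained by direct unfolding of the semantic clauses with $a = V(p)$, $b = V(q)$, and the refined condition \eqref{eq:mp-corr-fd} is handled by taking $b = s(x,a)$ in the full case and by separating $x$ from the closed filter $s(x,a)$ via a clopen filter (using compactness and the HMS axiom) in the descriptive case, exactly as the paper does in its Lemmas~\ref{lem:mp-corr} and following. The only cosmetic difference is that you derive the converse of the ``moreover'' part as an implication between the frame conditions \eqref{eq:mp-corr} and \eqref{eq:mp-corr-fd}, whereas the paper constructs an explicit falsifying valuation; these are interchangeable once the first correspondence is in place.
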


  As witnessed by Example~\ref{exm:kup-no-k1}, we cannot use
  $\kappa_1$-persistence to derive completeness results for the extension
  of $\log{CL}$ with~$\axref{ax:pnp}$, $\axref{ax:mp}$.
  Instead, we use the \emph{principal fill-in}, or \emph{$\kappa_{\uparrow}$-fill-in},
  and $\kappa_{\uparrow}$-persistence, defined next.

\begin{definition}
  Let $\topo{X} = (X, \ftop, \fmeet, s, \tau)$ be a conditional L-space.
  Define 
  \begin{equation*}
    s_{\ua}(x, a) = \left\{
      \begin{array}{ll}
        s(x, a) &\text{if $a$ is a filter that is clopen in $\topo{X}$} \\
        \ua x   &\text{if $a$ is a non-clopen filter}
      \end{array}
    \right.
  \end{equation*}
  and let $\kappa_{\ua}\topo{X} := (X, \ftop, \fmeet, s_{\ua})$.
  A consequence pair $\phi \cp \psi$ is called \emph{$\kappa_{\ua}$-persistent}
  if $\topo{X} \Vdash \phi \cp \psi$ implies $\kappa_{\ua}\topo{X} \Vdash \phi \cp \psi$,
  for every selection L-space $\topo{X}$.
\end{definition}

  To see that $s_{\ua}$ does indeed define a selection function on $(X, \ftop, \fmeet)$
  we need to
  verify (\ref{it:sf-top}, \ref{it:sf-up}, \ref{it:sf-filt}) for non-clopen $a$
  (the case for clopen $a$ it taken care of by the fact that $s$ is a topological
  selection function). The first two are immediate,
  and the last follows from the fact that $\ua x \genFil \ua y = \ua(x \fmeet y)$.

\begin{lemma}
  The axioms $\axref{ax:pnp}$, $\axref{ax:mp}$ and $\axref{ax:veq}$
  are $\kappa_{\uparrow}$-persistent.
\end{lemma}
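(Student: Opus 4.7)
The plan is to argue axiom by axiom, in each case applying the correspondence results already recorded in Propositions~\ref{prop:triv-fill-corr} and~\ref{prop:refl-mp-veq-corr}. Since $\topo{X}$ is a selection L-space it is descriptive, and $\kappa_{\ua}\topo{X}$ is a (full) selection L-frame, so for $\axref{ax:mp}$ and $\axref{ax:veq}$ I may use the simplified full/descriptive forms~\eqref{eq:mp-corr-fd} and~\eqref{eq:veq-corr-fd}. In every case the argument splits on whether the filter $a$ appearing as the second argument of the selection function is clopen in $\topo{X}$ or not: on clopen filters $s_{\ua}$ coincides with $s$ so the condition transfers verbatim, and on non-clopen filters $s_{\ua}(x,a) = \ua x$, which makes each condition hold essentially by inspection.

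Concretely, for $\axref{ax:pnp}$ I would assume $\topo{X}$ satisfies~\eqref{eq:pnp-corr} on clopen filters and fix $x \in X$ and any filter $a$ of $(X,\ftop,\fmeet)$ with $x \in a$ and $s_{\ua}(x,a) = \{\ftop\}$. If $a$ is clopen, apply the assumption directly. If not, $s_{\ua}(x,a) = \ua x$, so $\ua x = \{\ftop\}$, forcing $x = \ftop$. For $\axref{ax:mp}$ I would use~\eqref{eq:mp-corr-fd}: assuming $x \in a$, on clopen $a$ I get $x \in s(x,a) = s_{\ua}(x,a)$, while on non-clopen $a$ the value $s_{\ua}(x,a) = \ua x$ trivially contains $x$. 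For $\axref{ax:veq}$ I would use~\eqref{eq:veq-corr-fd}: on clopen $a$, $s_{\ua}(x,a) = s(x,a) \subseteq \ua x$, and on non-clopen $a$, $s_{\ua}(x,a) = \ua x \subseteq \ua x$.

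There is no real obstacle; the only thing one must be slightly careful about is matching the right correspondence form to the right object (full/descriptive versions of the conditions for $\axref{ax:mp}$ and $\axref{ax:veq}$), and to observe at the outset that $s_{\ua}$ is a bona fide selection function on $(X,\ftop,\fmeet)$, which is the content of the remark immediately preceding the lemma. After that, the three case splits each occupy a line or two.
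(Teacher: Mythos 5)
Your proposal is correct and follows essentially the same route as the paper: the paper's proof performs exactly this clopen/non-clopen case split for \axref{ax:pnp} via \eqref{eq:pnp-corr} and then declares the \axref{ax:mp} and \axref{ax:veq} cases "similar," which is precisely what you carry out using the full/descriptive forms \eqref{eq:mp-corr-fd} and \eqref{eq:veq-corr-fd}. Your preliminary remarks (that $s_{\ua}$ is a genuine selection function, and that the simplified correspondence forms apply to both the descriptive space and the full fill-in) match the paper's setup.
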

\begin{proof}
  Let $\topo{X} = (X, \ftop, \fmeet, s, \tau)$ be a selection L-space
  and suppose that it validates $\axref{ax:pnp}$.
  We show that $\kappa_{\ua}\topo{X}$ satisfies \eqref{eq:pnp-corr}.
  Let $a$ be any filter of $\kappa_{\ua}\topo{X}$ and $x \in a$
  such that $s_{\ua}(x, a) = \{ 1 \}$.
  If $a$ is clopen then we have $x = \ftop$ because $\topo{X}$ satisfies
  \eqref{eq:pnp-corr}. If not, then we have
  $s_{\ua}(x, a) = \ua x \subseteq \{ \ftop \}$, which also forces
  $x = \ftop$. So $\kappa_{\ua}\topo{X}$ satisfies \eqref{eq:pnp-corr},
  hence validates $\axref{ax:pnp}$.
  
  The cases for $\axref{ax:mp}$ and $\axref{ax:veq}$ can be handled similarly.
\end{proof}

  We arrive at a completeness theorem akin to Theorem~\ref{thm:fill-1-compl}.

\begin{theorem}\label{thm:mp-veq-compl}
  Let $\Ax \subseteq \{ \axref{ax:pnp}, \axref{ax:mp}, \axref{ax:veq}, \axref{ax:det}, \axref{ax:expl}, \axref{ax:prop} \}$.
  Then the logic $\log{CL}(\Ax)$ is sound and complete with respect to
  the class of conditional L-frames that validate $\log{ax}$
  for each $\log{ax} \in \Ax$.
\end{theorem}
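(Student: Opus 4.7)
The plan is to follow the same template as Theorem~\ref{thm:fill-1-compl} and Theorem~\ref{thm:fill-in-agn}, but using the principal fill-in $\kappa_{\ua}$ in place of the top fill-in $\kappa_1$. Soundness is immediate from the correspondence results assembled in this section: Proposition~\ref{prop:refl-mp-veq-corr} handles $\axref{ax:pnp}$ and $\axref{ax:mp}$, Proposition~\ref{prop:triv-fill-corr} handles $\axref{ax:veq}$, Proposition~\ref{prop:agnostic-fill-corr} handles $\axref{ax:det}$ and $\axref{ax:expl}$, and the $\cto$-free case $\axref{ax:prop}$ is sound on every selection L-frame as its interpretation does not involve the selection function.

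For completeness, suppose $\phi \cp \psi \notin \log{CL}(\Ax)$. Algebraic completeness (Theorem~\ref{thm:alg-sound-compl}) yields a conditional lattice $\amo{A}$ that validates every axiom in $\Ax$ while failing $\phi \cp \psi$. Let $\topo{X} := \Ftop(\amo{A})$. By Theorem~\ref{thrm:duality} we have $\Fclp(\topo{X}) \cong \amo{A}$, so viewing $\topo{X}$ as a descriptive general frame and applying Lemma~\ref{lem:gen-cpx-summary} we conclude that $\topo{X}$ validates every axiom in $\Ax$ and refutes $\phi \cp \psi$ under some clopen valuation $V$.

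Now form the selection L-frame $\kappa_{\ua}\topo{X}$. Every axiom in the list is $\kappa_{\ua}$-persistent: $\axref{ax:pnp}$, $\axref{ax:mp}$ and $\axref{ax:veq}$ by the preceding lemma, and $\axref{ax:det}$, $\axref{ax:expl}$, $\axref{ax:prop}$ by Lemma~\ref{lem:fill-in-agn}, since all-persistence trivially implies $\kappa_{\ua}$-persistence. Hence $\kappa_{\ua}\topo{X}$ validates $\Ax$.

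Finally, we verify that $\kappa_{\ua}\topo{X} \not\Vdash \phi \cp \psi$ by mimicking Lemma~\ref{lem:kappa1}. The clopen valuation $V$ is also a valuation for $\kappa_{\ua}\topo{X}$, and a routine induction on formula complexity shows that $\lb\chi\rb^{(\kappa_{\ua}\topo{X}, V)}$ is a clopen filter of $\topo{X}$ coinciding with $\lb\chi\rb^{(\topo{X}, V)}$: at each conditional-implication step $\lb\chi_1\rb$ is clopen by hypothesis, so $s_{\ua}(\cdot, \lb\chi_1\rb) = s(\cdot, \lb\chi_1\rb)$, and the truth set $\lb\chi_1 \cto \chi_2\rb$ remains clopen by~\eqref{it:St}. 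Thus $(\topo{X}, V) \not\Vdash \phi \cp \psi$ transfers to $(\kappa_{\ua}\topo{X}, V)$, completing the proof. I do not expect a serious obstacle here; all the genuine work was done in proving $\kappa_{\ua}$-persistence of the non-trivial axioms in the preceding lemma, and this theorem is essentially its packaging.
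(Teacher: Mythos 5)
Your proposal is correct and follows exactly the route the paper intends for this theorem (whose proof is left implicit as a variation on Theorem~\ref{thm:fill-1-compl}): algebraic completeness, duality to a selection L-space refuting $\phi \cp \psi$, and then the principal fill-in $\kappa_{\ua}$, using $\kappa_{\ua}$-persistence of $\axref{ax:pnp}$, $\axref{ax:mp}$, $\axref{ax:veq}$ and all-persistence of $\axref{ax:det}$, $\axref{ax:expl}$, $\axref{ax:prop}$, together with the observation that the refuting clopen valuation transfers since only the action of the selection function on clopen filters matters. The only blemish is the aside that a $\cto$-free axiom ``is sound on every selection L-frame''---it is not (e.g.\ distributivity), but this is harmless because soundness for the class of frames validating $\Ax$ is already given by Theorem~\ref{thrm:framesound}.
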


\subsection{Semilattice semantics for intuitionistic logic}\label{subsec:int-sl}

  Lastly, we consider the extension of $\log{CL}$ with the axioms
  $\axref{ax:refl}$, $\axref{ax:mp}$ and $\axref{ax:veq}$.
  This is particularly interesting, because the algebraic semantics of
  $\log{CL}(\{ \axref{ax:refl}, \axref{ax:mp}, \axref{ax:veq} \})$
  is given by the class of Heyting algebras.
  Therefore we obtain an alternative semantics for intuitionistic logic
  which is based on semilattices.
  
  In fact, it will turn out that we can forget about the selection function,
  and use distributive semilattices as a sound and complete semantics for
  intuitionistic logic!
  This simplifies existing semantics for intuitionistic logic from the literature
  that use (semi)lattices, such as those in \cite{Kav24,Kom86}.

\begin{proposition}\label{prop:CL-refl-mp-veq-HA}
  The algebraic semantics of
  $\log{CK}(\{ \axref{ax:refl}, \axref{ax:mp}, \axref{ax:veq} \})$
  is given by the variety of Heyting algebras.
\end{proposition}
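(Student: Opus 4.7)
The plan is to prove both inclusions of the claimed equivalence, viewing Heyting algebras as conditional lattices whose operator $\cto$ is the relative pseudocomplement.

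First, I would dispatch the easy direction. Given a Heyting algebra $\amo{A}$ with its standard implication $\to$, it is distributive, $a \to \top = \top$, and $a \to (b \wedge c) = (a \to b) \wedge (a \to c)$ by residuation, so $\amo{A}$ is a conditional lattice in the sense of Definition~\ref{def:algebra}. The three axioms then hold by direct translation of the Heyting residuation law $a \wedge b \leq c \iff a \leq b \to c$: applying it to $a \wedge a \leq a$ gives \axref{ax:refl}, to $(a \to b) \leq a \to b$ gives \axref{ax:mp}, and to $a \wedge b \leq a$ gives \axref{ax:veq}.

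The substantive direction is the converse. Let $\amo{A}$ be a conditional lattice validating the three axioms. I would first extract from the conditional lattice equations the fact that $\cto$ is monotone in its second argument: if $b \leq c$, then $b = b \wedge c$, so $a \to b = (a \to b) \wedge (a \to c)$, giving $a \to b \leq a \to c$. The key identity I would then prove is
\begin{equation*}
  b \to a \;=\; b \to (a \wedge b) \qquad \text{for all } a, b \in A.
\end{equation*}
This follows from normality and \axref{ax:refl}: $b \to (a \wedge b) = (b \to a) \wedge (b \to b) = (b \to a) \wedge \top = b \to a$.

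With these ingredients in hand, I would establish residuation, which is the heart of the argument. For the forward direction, assume $a \wedge b \leq c$. Then by \axref{ax:veq} and the identity above, $a \leq b \to a = b \to (a \wedge b)$, and monotonicity of $\cto$ in the second argument promotes this to $a \leq b \to c$. For the backward direction, assume $a \leq b \to c$. Then $a \wedge b \leq (b \to c) \wedge b \leq c$ by monotonicity of $\wedge$ and \axref{ax:mp}. Hence $\cto$ is residuated with respect to $\wedge$, so $\amo{A}$ is a Heyting algebra in which $\cto$ is the Heyting implication; distributivity follows automatically from residuation by the usual argument, and uniqueness of residuals pins down $\cto$ as the Heyting implication.

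I do not expect major obstacles; the one place that needs genuine care is checking that the identity $b \to a = b \to (a \wedge b)$ is available (it uses only \axref{ax:refl} together with the built-in normality of a conditional lattice), so that residuation can be derived from just the three axioms without any prior assumption of distributivity. Everything else is bookkeeping against the definition of a conditional lattice and the standard characterisation of Heyting algebras as residuated lattices.
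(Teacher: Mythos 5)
Your proof is correct, and it is a fully worked-out version of what the paper only gestures at. The paper's own proof is a one-line appeal to an equational basis for Heyting algebras (Burris--Sankappanavar, Section~II.1, Example~11): it asserts that the inequalities coming from \axref{ax:refl}, \axref{ax:mp}, \axref{ax:veq} hold in every Heyting algebra and conversely entail those defining equations. You instead derive the residuation adjunction $a \wedge b \leq c \iff a \leq b \cto c$ directly, which is a genuinely different (and more self-contained) execution: your key identity $b \cto a = b \cto (a \wedge b)$, obtained from normality plus $b \cto b = \top$, together with \axref{ax:veq} for one direction and \axref{ax:mp} for the other, gives the adjunction without ever checking a specific equational basis, and it makes transparent why distributivity comes for free rather than needing to be assumed. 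The only blemish is in the easy direction: residuation applied to $a \wedge a \leq a$ yields $a \leq a \cto a$, whereas \axref{ax:refl} asserts $\top \leq a \cto a$; you should instead apply residuation to $\top \wedge a \leq a$. This is a one-symbol fix and does not affect the argument. You might also add the closing remark (implicit in your "uniqueness of residuals") that the correspondence is bijective on operations, so $\cat{CLat}(\{\axref{ax:refl},\axref{ax:mp},\axref{ax:veq}\})$ is term-equivalent to, not merely contains, the variety of Heyting algebras.
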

\begin{proof}
  The inequalities induced by the additional axioms are valid in very Heyting algebra
  and imply the those
  equationally defining Heyting algebras, such as those in
  \cite[Section~II.1, Example 11]{BurSan81}.
\end{proof}

\begin{lemma}\label{lem:sf-int-corr}
  Let $\mo{X} = (X, \ftop, \fmeet, s, A)$ be a full or descriptive
  general conditional L-frame.
  Then $\mo{X}$ validates $\axref{ax:refl}, \axref{ax:mp}$ and $\axref{ax:veq}$
  if and only if 
  \begin{equation}\label{eq:int-corr}\tag{\ref{ax:refl}-\ref{ax:mp}-\ref{ax:veq}-corr}
    s(x, a) = a \cap \ua x \quad\text{for all $x \in X$ and $a \in A$}.
  \end{equation}
\end{lemma}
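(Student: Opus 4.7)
\smallskip\noindent\textbf{Proof plan.} The plan is to reduce the lemma to the three individual correspondence conditions already established in Propositions~\ref{prop:triv-fill-corr} and~\ref{prop:refl-mp-veq-corr}, using the sharper forms \eqref{eq:veq-corr-fd} and \eqref{eq:mp-corr-fd} that are available under the full-or-descriptive hypothesis. So I would first observe that, under this hypothesis, $\mo{X}$ validates all three of $\axref{ax:refl}$, $\axref{ax:mp}$, $\axref{ax:veq}$ if and only if it simultaneously satisfies
\begin{equation*}
  s(x,a) \subseteq a, \qquad s(x,a) \subseteq \ua x, \qquad x \in a \Rightarrow x \in s(x,a),
\end{equation*}
for all $x \in X$ and $a \in A$. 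The lemma then reduces to showing these three conditions are jointly equivalent to the single equality $s(x,a) = a \cap \ua x$.

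For the forward direction, the inclusion $s(x,a) \subseteq a \cap \ua x$ is immediate by intersecting \eqref{eq:refl-corr} and \eqref{eq:veq-corr-fd}. For the reverse inclusion, pick $y \in a \cap \ua x$, so $y \in a$ and $x \fleq y$. Applying \eqref{eq:mp-corr-fd} at $y$ yields $y \in s(y,a)$, and then monotonicity of the selection function in its first argument, namely \eqref{it:sf-up}, gives $s(y,a) \subseteq s(x,a)$, whence $y \in s(x,a)$. This step, which combines \eqref{it:sf-up} with \eqref{eq:mp-corr-fd}, is the only non-trivial point of the argument.

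For the converse, assume $s(x,a) = a \cap \ua x$. Then $s(x,a) \subseteq a$ gives \eqref{eq:refl-corr}, $s(x,a) \subseteq \ua x$ gives \eqref{eq:veq-corr-fd}, and if $x \in a$ then $x \in a \cap \ua x = s(x,a)$ (since $x \in \ua x$ trivially), which is \eqref{eq:mp-corr-fd}. Since each of these characterises the validity of the corresponding axiom on full or descriptive general frames, all three axioms are validated by $\mo{X}$, completing the equivalence. The argument is entirely elementary once the correspondence results are in hand; I do not anticipate any genuine obstacle beyond remembering to invoke~\eqref{it:sf-up} in the one nontrivial inclusion.
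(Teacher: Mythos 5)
Your proof is correct and follows essentially the same route as the paper: reduce to the correspondence conditions \eqref{eq:refl-corr}, \eqref{eq:veq-corr-fd} and \eqref{eq:mp-corr-fd}, get the easy inclusion $s(x,a)\subseteq a\cap\ua x$ from the first two, and obtain the reverse inclusion by applying \eqref{eq:mp-corr-fd} at $y$ and then antitonicity \eqref{it:sf-up}. No gaps; this matches the paper's argument step for step.
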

\begin{proof}
  It suffices to show that $s(x, a) = a \cap \ua x$ if and only if
  \eqref{eq:refl-corr}, \eqref{eq:veq-corr-fd} and \eqref{eq:mp-corr-fd}
  are satisfied for $x$ and $a$.
  If $s(x, a) = a \cap \ua x$, then it is easy to see that
  each of these holds.
  Conversely, the first and last conditions imply
  $s(x, a) \subseteq a \cap \ua x$.
  For the reverse inclusion, if $y \in a \cap \ua x$ then $y \in a$ and $x \fleq y$
  so that $y \in s(y, a) \subseteq s(x, a)$
  by~\eqref{eq:mp-corr-fd} and~\eqref{it:sf-up} respectively.
\end{proof}

  We characterise exactly which L-frames we can equip with such
  a selection function.

\begin{lemma}\label{lem:sf-distr}
  Let $(X, \ftop, \fmeet)$ be an L-frame and define a selection function $s$ by
  $s(x, p) = p \cap \ua x$.
  Then $s$ is a selection function if and only if $(X, \ftop, \fmeet)$ is
  distributive.
\end{lemma}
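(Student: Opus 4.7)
The plan is to verify the three selection-function axioms (S$_1$), (S$_2$), (S$_3$) for $s(x,p) := p \cap \ua x$, observing that the first two and the filter property hold unconditionally, so the whole equivalence collapses to the question of when (S$_3$) holds. The main obstacle is pinpointing how (S$_3$) encodes exactly the distributivity property from Definition~\ref{def:Lfrm-distr}, and for the converse direction finding the right filter $a$ to witness distributivity from a given instance $x \fmeet y \fleq z$.

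First I would dispense with the easy checks. For any filter $p$ and any $x \in X$, the set $p \cap \ua x$ is the intersection of two filters, hence a filter, so $s$ takes values in $\Filt(X,\ftop,\fmeet)$. Condition (S$_1$) holds because $s(\ftop, a) = a \cap \ua \ftop = a \cap \{\ftop\} = \{\ftop\}$, using that every filter contains $\ftop$. Condition (S$_2$) holds because $x \fleq y$ implies $\ua y \subseteq \ua x$, hence $a \cap \ua y \subseteq a \cap \ua x$. These four checks are independent of any hypothesis on $(X, \ftop, \fmeet)$.

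For the $(\Leftarrow)$ direction of the equivalence, assume the semilattice is distributive and verify (S$_3$). Let $z \in s(x \fmeet y, a) = a \cap \ua(x \fmeet y)$, so $z \in a$ and $x \fmeet y \fleq z$. Distributivity furnishes $u, v \in X$ with $x \fleq u$, $y \fleq v$ and $u \fmeet v = z$. Since $a$ is upward closed and $z \fleq u$ (because $z = u \fmeet v \fleq u$), we get $u \in a$, hence $u \in a \cap \ua x = s(x,a)$; symmetrically $v \in s(y,a)$, and $u \fmeet v = z \fleq z$.

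For $(\Rightarrow)$, suppose $s$ defined by $s(x,p) = p \cap \ua x$ is a selection function; the key trick is to feed the principal filter into (S$_3$). Given $x, y, z \in X$ with $x \fmeet y \fleq z$, set $a := \ua z$, which is a filter. Then $z \in a$ and $x \fmeet y \fleq z$, so $z \in s(x \fmeet y, a)$. By (S$_3$), there exist $u \in s(x,a) = \ua z \cap \ua x$ and $v \in s(y,a) = \ua z \cap \ua y$ with $u \fmeet v \fleq z$. From $u, v \in \ua z$ we have $z \fleq u$ and $z \fleq v$, hence $z \fleq u \fmeet v$; combined with $u \fmeet v \fleq z$ this gives $u \fmeet v = z$. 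Together with $x \fleq u$ and $y \fleq v$ this is exactly the distributivity witness required by Definition~\ref{def:Lfrm-distr}, completing the proof.
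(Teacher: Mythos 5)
Your proof is correct and follows essentially the same route as the paper: the unconditional verification of (S$_1$) and (S$_2$), the use of distributivity to establish (S$_3$), and the key trick of instantiating (S$_3$) at the principal filter $\ua z$ for the converse all match the paper's argument. The only cosmetic difference is that the paper phrases the forward direction via Lemma~\ref{lem:sf-genfil} rather than invoking (S$_3$) directly, which amounts to the same thing.
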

\begin{proof}
  Suppose $s$ is a selection function and $x, y, z \in X$ are such that
  $x \fmeet y \fleq z$. Then $z \in s(x \fmeet y, \ua z)$ so by Lemma~\ref{lem:sf-genfil}
  $z \in s(x, \ua z) \genFil s(y, \ua z)$.
  Therefore we can find $u \in s(x, \ua z)$ and $v \in s(y, \ua z)$ such that
  $u \fmeet v \fleq z$. Since $s(x, \ua z) = \ua z \cap \ua x$
  we find $x \fleq u$ and $z \fleq u$, and similarly $y \fleq v$ and $z \fleq v$.
  This implies $u \fmeet v \fleq z \fleq u \fmeet v$, so that $u \fmeet v = z$.
  This proves distributivity.
  
  For the converse, note that $s(1, p) = p \cap \ua 1 = \{ 1 \}$ because
  $1$ is the largest element of the frame and $1 \in p$ for any filter $p$.
  Furthermore, it follows immediately from the definition of $s$ that
  $x \fleq y$ implies $s(y, p) \subseteq s(x, p)$.
  So~\eqref{it:sf-top} and~\eqref{it:sf-up} are satisfied.
  Lastly, to prove that~\eqref{it:sf-filt} holds we use that
  $(X, \ftop, \fmeet)$ is distributive.
  Suppose $z \in s(x \fmeet y, p) = p \cap \ua(x \fmeet y)$.
  Then $z \in p$ and $x \fmeet y \fleq z$, so that distributivity
  gives two worlds $u, v$ above $x$ and $y$ respectively such that $u \fmeet v = z$.
  Clearly $u, v \in p$, so that $u \in s(x, p)$ and $v \in s(y, p)$, as desired.
\end{proof}

\begin{theorem}\label{thm:compl-CL-refl-mp-veq}
  The logic $\log{CL}(\{ \axref{ax:refl}, \axref{ax:mp}, \axref{ax:veq} \})$
  is sound and complete with respect to the class of L-frames
  satisfying~\eqref{eq:int-corr}.
\end{theorem}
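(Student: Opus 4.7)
The plan is to combine the algebraic characterization (conditional Heyting algebras coincide with Heyting algebras, by Proposition~\ref{prop:CL-refl-mp-veq-HA}) with the duality of Theorem~\ref{thrm:duality}, using a bespoke fill-in $\hat{s}(p, q) := q \cap \ua p$ to turn the dual L-space into a selection L-frame of the desired form.

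For soundness, let $(X, \ftop, \fmeet)$ be any L-frame equipped with the selection function $s(x, a) = a \cap \ua x$. Lemma~\ref{lem:sf-distr} forces $(X, \ftop, \fmeet)$ to be distributive and guarantees that $s$ is a genuine selection function. Viewing the resulting selection L-frame as a full general frame, Lemma~\ref{lem:sf-int-corr} says that \eqref{eq:int-corr} is equivalent to validity of $\axref{ax:refl}$, $\axref{ax:mp}$ and $\axref{ax:veq}$; Theorem~\ref{thrm:framesound} then delivers soundness.

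For completeness, suppose $\phi \cp \psi$ is not derivable. By Proposition~\ref{prop:CL-refl-mp-veq-HA} together with Theorem~\ref{thm:alg-sound-compl}, some Heyting algebra $\amo{H}$ (with $\cto$ interpreted as Heyting implication) refutes $\phi \cp \psi$. Let $\topo{X} = \Ftop(\amo{H})$ be the dual selection L-space. A short calculation in $\amo{H}$---using $a \cto a = \top \in p$ and $c \leq a \cto c$ for the inclusion $\supseteq$, and modus ponens for $\subseteq$---establishes that on any clopen filter $\theta(a)$, the topological selection function satisfies $s_{\amo{H}}(p, \theta(a)) = \theta(a) \cap \ua p$. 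I then extend the selection function to all filters of $(\Filt(\amo{H}), A, \cap)$ by $\hat{s}(p, q) := q \cap \ua p$; since $\amo{H}$ is a distributive lattice, $\Filt(\amo{H})$ is a distributive lattice (indeed a frame), whence the semilattice $(\Filt(\amo{H}), A, \cap)$ is distributive in the sense of Definition~\ref{def:Lfrm-distr}, and Lemma~\ref{lem:sf-distr} certifies $\hat{s}$ as a bona fide selection function. The resulting selection L-frame $\mo{Y}$ satisfies \eqref{eq:int-corr} by construction.

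To finish, take the clopen valuation $V$ corresponding to the refuting assignment on $\amo{H}$. Because $\hat{s}$ agrees with $s_{\amo{H}}$ on clopen filters, a routine induction yields $\lb \chi \rb^{(\topo{X}, V)} = \lb \chi \rb^{(\mo{Y}, V)}$ for every $\chi \in \mathbf{CL}$ (truth sets remaining clopen filters throughout), so $\mo{Y}$ refutes $\phi \cp \psi$. The main obstacle is verifying \eqref{it:sf-filt} for $\hat{s}$ on non-clopen filters, which reduces to semilattice distributivity of $(\Filt(\amo{H}), A, \cap)$; the required decomposition is $r = (r \genFil p) \cap (r \genFil q)$ whenever $p \cap q \subseteq r$, and this follows from frame distributivity of $\Filt(\amo{H})$.
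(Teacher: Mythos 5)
Your proposal is correct and follows essentially the same route as the paper: both take a refuting dual space, observe that its underlying semilattice is distributive, and extend the topological selection function by the principal fill-in $\hat{s}(x,p)=p\cap{\ua}x$, which Lemma~\ref{lem:sf-distr} certifies as a selection function and which agrees with $s$ on clopen filters, so the refuting clopen valuation survives. The only difference is that you make explicit two steps the paper outsources --- the computation $s_{\amo{H}}(p,\theta(a))=\theta(a)\cap{\ua}p$ on the canonical space (which the paper instead gets from Lemma~\ref{lem:sf-int-corr} applied to descriptive general frames) and the semilattice-distributivity of $\Filt(\amo{H})$ via $u=r\genFil p$, $v=r\genFil q$ (which the paper infers from distributivity of $\Fclp(\topo{X})$) --- and both check out.
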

\begin{proof}
  Soundness follows from Lemma~\ref{lem:sf-int-corr}.
  For completeness suppose $\phi \cp \psi$ is not derivable in
  $\log{CL}(\{ \axref{ax:refl}, \axref{ax:mp}, \axref{ax:veq} \})$.
  Then there exists a conditional L-space $\topo{X} = (X, \ftop, \fmeet, s, \tau)$
  satisfying $s(x, a) = a \cap \ua x$ for all $x \in X$ and clopen filters $a$.
  As a consequence of the fact that every Heyting algebra is distributive~\cite[Proposition 1.5.3]{esakia},
  the dual conditional lattice $\Fclp(\topo{X})$ is
  distributive, and hence $(X, \ftop, \fmeet)$ is a distributive semilattice.
  Lemma~\ref{lem:sf-distr} then informs us that setting $\hat{s}(x, p) = p \cap \ua x$
  yields a conditional L-frame $(X, \ftop, \fmeet, \hat{s})$.
  Since $\hat{s}$ extends $s$, the valuation for $\topo{X}$ that invalidates
  $\phi \cp \psi$ does the same for $(X, \ftop, \fmeet, \hat{s})$.
  This proves completeness.
\end{proof}

  We note that in this semantics we have:
  \begin{align*}
    x \Vdash \phi \cto \psi
      &\iff s(x, \llb \phi \rrb) \subseteq \llb \psi \rrb \\
      &\iff \llb \phi \rrb \cap \ua x \subseteq \llb \psi \rrb \\
      &\iff \forall y \fgeq x ( y \Vdash \phi \text{ implies } y \Vdash \psi)
  \end{align*}
  which should remind the reader of the usual interpretation of
  implication in intuitionistic Kripke frames.
  Furthermore, in light of Lemma~\ref{lem:sf-distr} we can do away with
  the selection function altogether.

  Thus, we can interpret intuitionistic logic in distributive semilattices
  with a valuation that assigns to each proposition letter a filter of the
  semilattice. Let $(X, \ftop, \fmeet, V)$ be such a distributive semilattice
  with a valuation, then the interpretation of an intuitionistic formula $\phi$
  is defined recursively by:
  \begin{equation*}
    x \Vdash \phi \to \psi
      \iff \forall y \fgeq x(y \Vdash \phi \text{ implies } y \Vdash \psi)
  \end{equation*}
  together with the conditions from Definition~\ref{def:L-model}.
  We call this the \emph{semilattice semantics} of intuitionistic logic.
  We obtain the following theorem as a consequence of
  Theorem~\ref{thm:compl-CL-refl-mp-veq}.

\begin{theorem}
  Intuitionistic logic is sound and complete with respect to
  semilattice semantics.
\end{theorem}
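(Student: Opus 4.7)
The plan is to derive the theorem as an immediate corollary of Theorem~\ref{thm:compl-CL-refl-mp-veq}, combined with Lemma~\ref{lem:sf-distr} and the identification of the algebraic semantics of $\log{CL}(\{\axref{ax:refl}, \axref{ax:mp}, \axref{ax:veq}\})$ with Heyting algebras from Proposition~\ref{prop:CL-refl-mp-veq-HA}. There is essentially no new mathematics to do; the proof is a bookkeeping exercise that threads these results together through a translation between consequence pairs and intuitionistic implications.

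First I would observe that the semilattice forcing clause for $\phi \to \psi$ displayed just above the theorem coincides, as a relation on worlds, with the interpretation of $\phi \cto \psi$ in the selection L-model obtained by setting $s(x,p) := p \cap \ua x$; this is precisely the computation carried out immediately after Theorem~\ref{thm:compl-CL-refl-mp-veq}. Next, Lemma~\ref{lem:sf-distr} tells us that this recipe for $s$ actually defines a selection function exactly when $(X, \ftop, \fmeet)$ is distributive. Hence distributive semilattices equipped with a filter-valued valuation are in bijective correspondence with selection L-models satisfying~\eqref{eq:int-corr}, and validity of an intuitionistic implication $\phi \to \psi$ in the former is literally validity of the consequence pair $\phi \cp \psi$ in the latter.

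I would then use Proposition~\ref{prop:CL-refl-mp-veq-HA} to identify derivability of $\phi \cp \psi$ in $\log{CL}(\{\axref{ax:refl}, \axref{ax:mp}, \axref{ax:veq}\})$ with intuitionistic derivability of $\phi \to \psi$: by Theorem~\ref{thm:alg-sound-compl} the former is equivalent to $\llp \phi \rrp \leq \llp \psi \rrp$ holding in every Heyting algebra, and by residuation ($\top \leq a \to b$ iff $a \leq b$) this is in turn equivalent to intuitionistic provability of $\phi \to \psi$. Combining this algebraic dictionary with Theorem~\ref{thm:compl-CL-refl-mp-veq} yields that intuitionistic derivability of $\phi \to \psi$ is equivalent to validity of $\phi \cp \psi$ on every L-frame satisfying~\eqref{eq:int-corr}, which by the previous paragraph is the same as validity of $\phi \to \psi$ under semilattice semantics in the sense defined just above the theorem.

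The only care point in the plan is precisely this last translation step: intuitionistic logic is customarily presented with a single turnstile and modus ponens rather than with consequence pairs, so one must be explicit about invoking the residuation property of Heyting algebras to line the two presentations up. Once that dictionary is fixed, the theorem is a direct composition of results already proven in the section, with no additional frame-theoretic or algebraic work required.
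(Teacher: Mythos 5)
Your proposal is correct and follows exactly the route the paper intends: the paper states this theorem as a direct consequence of Theorem~\ref{thm:compl-CL-refl-mp-veq}, using the displayed computation identifying the semilattice clause for $\to$ with the selection-function clause for $s(x,a)=a\cap\ua x$, Lemma~\ref{lem:sf-distr} to trade the selection function for distributivity, and Proposition~\ref{prop:CL-refl-mp-veq-HA} (with residuation in Heyting algebras) to align consequence pairs with intuitionistic derivability. Your writeup simply makes explicit the bookkeeping the paper leaves implicit, including the one genuine care point about translating between the two presentations of derivability.
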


\section{Future work}\label{sec:conc}

  We studied non-distributive positive logic with a weak notion of implication.
  Guided by the semantics for (classical) conditional logic, we provided a
  semantic framework for the logic given by semilattices with a selection
  function. By augmenting these frames with a topology, a duality for
  (the algebraic semantics of) the logic was derived, which we then used to
  prove completeness.
  
  We started with a weak form of implication so that one can modularly add
  additional axioms. We showcased this flexibility by giving sound and complete
  semantics for the extension of $\log{CL}$ with (combinations of) a number of
  axioms. This revealed an interesting phenomenon, namely the need to extend
  a topological selection function (whose second argument only takes
  clopen filters) to a selection function that can handle all filters.
  We also discovered that there seems to be no canonical fill-in to handle all axioms,
  as different axioms sometimes require different fill-ins in their completeness proofs.

  The work in this paper opens up many avenues for future work, some of
  which we briefly discuss below.
  
  \begin{description}
    \item[Extensions with more complex axioms]
          The axioms considered in Section~\ref{sec:extensions}
          are all relatively simple in the sense that they contain at
          most one occurrence of $\cto$. In future work, we intend to
          investigate other extensions, such as
          transitivity and cautious monotonicity:
          \begin{enumerate}\itemsep=0pt
            \renewcommand{\labelenumi}{(\theenumi) }
            \renewcommand{\theenumi}{$\mathsf{tr}$}
            \item $(p \cto q) \wedge (q \cto r) \cp (p \cto r)$
            \renewcommand{\theenumi}{$\mathsf{cm}$}
            \item $(p \cto q) \wedge (p \cto r) \cp ((p \wedge q) \cto r)$.
          \end{enumerate}
    \item[Sahlqvist correspondence and canonicity]
          Sahlqvist correspondence for classical modal logic
          identifies a large class of axioms for which one can mechanically
          find a frame correspondent. Sahlqvist completeness
          subsequently proves completeness for the extension of modal logic
          with a set of Sahqlvist formulas with respect to the class of Kripke
          frames satisfying the corresponding frame conditions,
          see for example \cite[Sections~3.6 and~5.6]{BlaRijVen01}.
          Similar results have been proven for a wide range of logics,
          including both weak positive logic and its modal extension investigated
          in~\cite{BezEA24}.
          
          It would be interesting to see to what extend we can obtain
          similar results for $\log{CL}$. The multitude of fill-ins for topological
          selection functions may make this more challenging,
          as each fill-in may require its own formula shape.
          Related work in the intuitionistic setting was recently
          presented in~\cite{DufGro24}.
    \item[Filtrations]
          Filtrations provide a method of turning a model that falsifies
          some formula or consequence pair into a finite model, such that
          it still falsifies the given formula or consequence pair.
          Therefore they can be used to obtain finite model properties.
          While filtrations for classical conditional logics are well
          understood~\cite{Seg89,Nut88}, it appears that the non-distributivity
          of $\log{CL}$ frustrates attempts to carry over the same definitions
          in our setting. Further research is required to resolve this,
          and therewith obtain a finite model property.
    \item[Diamond-like conditional implication]
          Throughout this paper we focussed on a conditional implication
          whose second argument behaves like a ``normal box,'' in the sense that it
          preserves finite meets. But one could wonder what happens if
          we add another implication, $\diamondcto$ whose second argument
          behaves like a diamond. This is particularly interesting because
          in the modal extension of weak positive logic studied in~\cite{BezEA24},
          boxes and diamonds relate in an unexpected way: whereas boxes are normal
          and diamonds are only monotone, and their interaction is described by
          one of the two interaction axioms of (distributive)
          positive modal logic~\cite{Dun95}.
  \end{description}

{\footnotesize
\bibliographystyle{ieeetr}
\bibliography{bibliography.bib}

\begin{thebibliography}{10}

\bibitem{Vis81}
A.~Visser, ``A propositional logic with explicit fixed points,'' {\em Studia
  Logica}, vol.~40, no.~2, pp.~155--175, 1981.

\bibitem{Vis81t}
A.~Visser, {\em Aspects of Diagonalization and Provability}.
\newblock PhD thesis, University of Utrecht, 1981.
\newblock
  \href{https://www.illc.uva.nl/Research/Publications/Dissertations/HDS/}{illc-hds}:\href{https://eprints.illc.uva.nl/id/eprint/1854/3/HDS-22-Albert_Visser.text.pdf}{HDS-22}.

\bibitem{Res94}
G.~Restall, ``Subintuitionistic logics,'' {\em Notre Dame Journal of Formal
  Logics}, vol.~35, no.~1, pp.~116--129, 1994.

\bibitem{Cor87}
G.~Corsi, ``Weak logics with strict implication,'' {\em Mathematical Logic
  Quarterly}, vol.~33, no.~5, pp.~389--406, 1987.

\bibitem{JonMal18}
D.~d. Jongh and F.~S. Maleki, ``Subintuitionistic logics and the implications
  they prove,'' {\em Indagationes Mathematicae}, vol.~29, pp.~1525--1545, 2018.

\bibitem{JonMal19}
D.~de~Jongh and F.~S. Maleki, ``Two neighborhood semantics for
  subintuitionistic logics,'' in {\em Proc.~{TbiLLC} 2018}, (Berlin,
  Heidelberg), Springer, 2019.

\bibitem{GroPat22-pmwi}
J.~d. Groot and D.~Pattinson, ``Monotone subintuitionistic logic: Duality and
  transfer results,'' {\em Notre Dame Journal of Formal Logic}, vol.~63, no.~2,
  pp.~213--242, 2022.

\bibitem{Saz99}
K.~Sazaki, ``Formalization for the consequence relation of {V}isser's
  propositional logic,'' {\em Reports on Mathematical Logic}, vol.~33,
  pp.~65--78, 1999.

\bibitem{SuzOno97}
Y.~Suzuki and H.~Ono, ``Hilbert-style proof system for {BPL},'' 1997.
\newblock Technical report, IS-RR-97-0040F.

\bibitem{Wan97}
H.~Wansing, ``Displaying as temporalizing: Sequent systems for
  subintuitionistic logics,'' in {\em Logic, Language and Computation},
  (Dordrecht), pp.~159--178, Kluwer Academic Publishers, 1997.

\bibitem{Har74}
G.~M. Hardegree, ``The conditional in quantum logic,'' {\em Synthese}, vol.~29,
  no.~1/4, Logic and Probability in Quantum Mechanics, pp.~63--80, 1974.

\bibitem{DalGiu08}
M.~L.~D. Chiara and R.~Giuntini, ``Quantum logics,'' 2008.
\newblock arxiv:\href{https://arxiv.org/abs/quant-ph/0101028v2}{0101028v2}.

\bibitem{YouSch19}
Y.~Younes and I.~Schmitt, ``On quantum implication,'' {\em Quantum Machine
  Intelligence}, vol.~1, pp.~53--63, 2019.

\bibitem{Cha19}
L.~Champollion, ``Distributivity in formal semantics,'' {\em Annual review of
  linguistics}, vol.~5, pp.~289--308, 2019.

\bibitem{Lew73}
D.~Lewis, {\em Counterfactuals}.
\newblock Harvard: Harvard University Press, 1973.

\bibitem{DalChi76}
M.~L.~D. Chiara, ``A general approach to non-distributive logics,'' {\em Studia
  Logica}, vol.~35, pp.~139--162, 1976.

\bibitem{ResPao05}
G.~Restall and F.~Paoli, ``The geometry of non-distributive logics,'' {\em The
  Journal of Symbolic Logic}, vol.~70, no.~4, pp.~1108--1126, 2005.

\bibitem{CocSee01}
R.~Cockett and R.~Seely, ``Finite sum-product logic,'' {\em Theory and
  Applications of Categories}, vol.~8, 2001.

\bibitem{Urq78}
A.~Urquhart, ``A topological representation theory for lattices,'' {\em Algebra
  Universalis}, vol.~8, no.~1, pp.~45--58, 1978.

\bibitem{Geh06}
M.~Gehrke, ``Generalized {K}ripke frames,'' {\em Studia Logica}, vol.~84,
  no.~2, pp.~241--275, 2006.

\bibitem{ConEA21}
W.~Conradie, A.~Palmigiano, C.~Robinson, and N.~Wijnberg, ``Non-distributive
  logics: from semantics to meaning,'' 2021.
\newblock arxiv:\href{https://arxiv.org/abs/2002.04257}{2002.04257}.

\bibitem{Gro24-vb-ll}
J.~d. Groot, ``Non-distributive positive logic as a fragment of first-order
  logic over semilattices,'' {\em Journal of Logic and Computation}, vol.~34,
  pp.~180--196, 2024.

\bibitem{Har92}
G.~Hartung, ``A topological representation of lattices,'' {\em Algebra
  Universalis}, vol.~29, pp.~273--299, 1992.

\bibitem{HarDun97}
C.~Hartonas and J.~M. Dunn, ``{S}tone duality for lattices,'' {\em Algebra
  universalis}, vol.~37, pp.~391--401, 1997.

\bibitem{BezEA24}
N.~Bezhanishvili, A.~Dmitrieva, J.~de~Groot, and T.~Moraschini, ``Positive
  modal logic beyond distributivity,'' {\em Annals of Pure and Applied Logic},
  vol.~175, no.~2, p.~103374, 2024.

\bibitem{Gir87}
J.~Y. Girard, ``Linear logic,'' {\em Theoretical computer science}, vol.~50,
  no.~1, pp.~1--101, 1987.

\bibitem{Moo97}
M.~Moortgat, ``Categorial type logics,'' in {\em Handbook of logic and
  language}, pp.~93--177, Elsevier, 1997.

\bibitem{Lam58}
J.~Lambek, ``The mathematics of sentence structure,'' {\em The American
  Mathematical Monthly}, vol.~65, pp.~154--170, 1958.

\bibitem{Vri17}
H.~d. Vries, ``Two kinds of distributivity,'' {\em Natural Language Semantics},
  pp.~173--197, 2017.

\bibitem{HofMisStr74}
K.~H. Hofmann, M.~Mislove, and A.~Stralka, {\em The {P}ontryagin duality of
  compact 0-dimensional semi-lattices and its applications}.
\newblock Berlin, New York: Springer, 1974.

\bibitem{RouMey73}
R.~Routley and R.~Meyer, ``The semantics of entailment,'' {\em Studies in Logic
  and the Foundations of Mathematics}, vol.~68, pp.~199--243, 1973.

\bibitem{DunRes02}
J.~M. Dunn and G.~Restall, ``Relevance logic,'' in {\em Handbook of
  Philosophical Logic}, vol.~6, pp.~1--128, Dordrecht: Springer, 2002.

\bibitem{Che75}
B.~F. Chellas, ``Basic conditional logic,'' {\em Journal of Philosophical
  Logic}, vol.~4, no.~2, pp.~133--153, 1975.
\newblock jstor:\href{http://www.jstor.org/stable/30226114}{30226114}.

\bibitem{Che80}
B.~F. Chellas, {\em Modal Logic: An Introduction}.
\newblock Cambridge: Cambridge University Press, 1980.

\bibitem{Gab85}
D.~M. Gabbay, ``Theoretical foundations for non-monotonic reasoning in expert
  systems,'' in {\em Logics and Models of Concurrent Systems}, (Berlin,
  Heidelberg), pp.~439--457, Springer-Verlag, 1989.

\bibitem{Nut88}
D.~Nute, ``Topics in conditional logic,'' {\em Studia Logica}, vol.~47, no.~2,
  pp.~175--176, 1988.

\bibitem{Seg89}
K.~Segerberg, ``Notes on conditional logic,'' {\em Studia Logica}, vol.~48,
  pp.~157--168, Jun 1989.

\bibitem{UntSch14}
M.~Unterhuber and G.~Schurz, ``Completeness and correspondence in
  {C}hellas-{S}egerberg semantics,'' {\em Studia Logica}, vol.~102,
  pp.~891--911, 2014.

\bibitem{BalCin18}
A.~Baltag and G.~Cin{\`a}, ``Bisimulation for conditional modalities,'' {\em
  Studia Logica}, vol.~106, pp.~1--33, 2018.

\bibitem{Gir19}
M.~Girlando, {\em On the Proof Theory of Conditional Logics}.
\newblock PhD thesis, {A}ix-{M}arseille {U}niversit\'{e} and {U}niversity of
  {H}elsinki, 2019.

\bibitem{GirNegSba19}
M.~Girlando, S.~Negri, and G.~Sbardolini, ``Uniform labelled calculi for
  conditional and counterfactual logics,'' in {\em Proc.~{WoLLIC} 2019},
  (Berlin, Heidelberg), pp.~248--263, Springer, 2019.

\bibitem{Wei19a}
Y.~Weiss, ``Basic intuitionistic conditional logic,'' {\em Journal of
  Philosophical Logic}, vol.~48, no.~3, pp.~447--469, 2019.

\bibitem{Dun95}
J.~M. Dunn, ``Positive modal logic,'' {\em Studia Logica}, vol.~55,
  pp.~301--317, 1995.
\newblock jstor:\href{https://www.jstor.org/stable/20015820}{20015820}.

\bibitem{Dmi21}
A.~Dmitrieva, ``Positive modal logic beyond distributivity: duality,
  preservation and completeness,'' Master's thesis, Institute for Logic,
  Language and Computation, University of Amsterdam, 2021.
\newblock
  \href{https://www.illc.uva.nl/Research/Publications/Reports/MoL/}{illc-MoL}:\href{https://eprints.illc.uva.nl/id/eprint/1811/1/MoL-2021-18.text.pdf}{2021-18}.

\bibitem{esakia}
L.~Esakia, {\em Preliminary Notions and Necessary Facts}, ch.~1, pp.~1--15.
\newblock Cham, Switzerland: Springer Verlag, July 2019.

\bibitem{BlaRijVen01}
P.~Blackburn, M.~d. Rijke, and Y.~Venema, {\em Modal Logic}.
\newblock Cambridge: Cambridge University Press, 2001.

\bibitem{KupPat11}
C.~Kupke and D.~Pattinson, ``Coalgebraic semantics of modal logics: {A}n
  overview,'' {\em Theoretical Computer Science}, vol.~412, no.~38,
  pp.~5070--5094, 2011.

\bibitem{Vic89}
S.~Vickers, {\em Topology via Logic}.
\newblock New York, NY, USA: Cambridge University Press, 1989.

\bibitem{BurSan81}
S.~Burris and H.~P. Sankappanavar, {\em A Course in Universal Algebra}.
\newblock Springer, post-2012 edition~ed., 1981.

\bibitem{Kav24}
G.~A. Kavvos, ``Two-dimensional {K}ripke semantics {II}.''
  arxiv:\href{https://arxiv.org/pdf/2406.03578v1}{2406.03578v1}, 2024.

\bibitem{Kom86}
Y.~Komori, ``A new semantics for intuitionistic predicate logic,'' {\em Studia
  Logica}, vol.~45, no.~1, pp.~9--17, 1986.
\newblock jstor:\href{http://www.jstor.com/stable/20015243}{20015243}.

\bibitem{DufGro24}
B.~Dufty and J.~de~Groot, ``Conditional {E}sakia duality,'' 2024.
\newblock Abstract presented at {TACL}.

\end{thebibliography}
}


\clearpage
\appendix

\section{Correspondence results}

\subsection{Correspondence results for Section~\ref{subsec:top-fill-in}}
\label{subsec:top-fill-in-proof}

  The next four lemmas prove the four correspondence
  results from Proposition~\ref{prop:triv-fill-corr}.

\begin{lemma}
  A general frame $\mo{G} = (X, \ftop, \fmeet, s, A)$ validates
  \eqref{ax:refl} $\top \cp p \cto p$ if and only if it satisfies
  \eqref{eq:refl-corr}: $s(x, a) \subseteq a$ for all $x \in X$ and $a \in A$.
\end{lemma}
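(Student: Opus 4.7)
The proof is essentially a direct unfolding of the semantics of $\cto$, so the plan is short.

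My plan is to prove both implications in turn. Recall that $\llb \top \rrb^{\mo{M}} = X$ always, and by the interpretation clause for conditional implication, $\llb p \cto p \rrb^{\mo{M}} = \{ x \in X \mid s(x, \llb p \rrb^{\mo{M}}) \subseteq \llb p \rrb^{\mo{M}} \}$. So $\mo{G} \Vdash \top \cp p \cto p$ means precisely that for every admissible valuation $V$, every $x \in X$ satisfies $s(x, V(p)) \subseteq V(p)$.

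For the forward direction, I would assume $\mo{G} \Vdash \top \cp p \cto p$ and fix arbitrary $x \in X$ and $a \in A$. Since $a$ is admissible, I can define an admissible valuation $V$ by $V(p) = a$ (and assigning, say, $X$ to all other proposition letters, which is admissible by the closure conditions on $A$). Then $\llb p \rrb^{(\mo{G}, V)} = a$, and validity of the axiom at $x$ yields $s(x, a) \subseteq a$, establishing \eqref{eq:refl-corr}.

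For the converse, assume $s(x, a) \subseteq a$ for all $x \in X$ and $a \in A$. Let $V$ be any admissible valuation and $x \in X$. Then $\llb p \rrb^{(\mo{G}, V)} = V(p) \in A$, so the frame condition applied with $a = V(p)$ gives $s(x, \llb p \rrb^{(\mo{G}, V)}) \subseteq \llb p \rrb^{(\mo{G}, V)}$, i.e., $x \in \llb p \cto p \rrb^{(\mo{G}, V)}$. Hence $X = \llb \top \rrb^{(\mo{G}, V)} \subseteq \llb p \cto p \rrb^{(\mo{G}, V)}$, so $\mo{G} \Vdash \top \cp p \cto p$.

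There is no real obstacle here; the only mild subtlety is making sure the witnessing valuation in the forward direction is genuinely admissible, which is immediate from $a \in A$ and the fact that $A$ contains $X$.
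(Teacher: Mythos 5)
Your proposal is correct and matches the paper's argument: both directions hinge on the observation that a valuation with $V(p)=a$ witnesses the correspondence, the only cosmetic difference being that the paper phrases the ``validity implies frame condition'' direction contrapositively while you argue it directly.
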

\begin{proof}
  Suppose $\mo{G}$ satisfies \eqref{eq:refl-corr} and let $V$ be
  an admissible valuation for $\mo{G}$.
  Then $V(p) \in A$ and by assumption $s(x, V(p)) \subseteq V(p)$ for
  all $x \in X$. This implies $x \Vdash p \cto p$ for all $x$,
  hence the $(\mo{G}, V) \Vdash \top \cp p \cto p$.
  Since $V$ is arbitrary, we find that $\mo{G}$ validates
  $\top \cp p \cto p$.
  
  For the converse, suppose $\mo{G}$ does not satisfy \eqref{eq:refl-corr}.
  Then there exist $x \in X$ and $a \in A$ such that $s(x, a) \not\subseteq a$.
  Let $V$ be a valuation of the proposition letters such that $V(p) = a$.
  Then $(\mo{G}, V), x \not\Vdash p \cto p$. This implies that $\mo{G}$ does not
  validate $\top \cp p \cto p$.
\end{proof}

\begin{lemma}
  A general frame $\mo{G} = (X, \ftop, \fmeet, s, A)$ validates
  \eqref{ax:cond} $p \cp (\top \cto p)$ if and only if it satisfies
  \eqref{eq:cond-corr}: $x \in a$ implies $s(x, X) \subseteq a$
  for all $x \in X$ and $a \in A$.
\end{lemma}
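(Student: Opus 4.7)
The plan is to prove both directions along the same lines as the previous lemma, with the key additional observation that $\llb \top \rrb^{\mo{M}} = X$ for every model $\mo{M}$, so that the truth set of $\top \cto p$ at a world $x$ is controlled precisely by $s(x, X)$ versus $V(p)$.

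For the forward direction, I would assume that $\mo{G}$ satisfies \eqref{eq:cond-corr} and fix an arbitrary admissible valuation $V$. Taking any $x \in X$ with $x \Vdash p$, we have $x \in V(p)$ and $V(p) \in A$, so by assumption $s(x, X) \subseteq V(p)$. Since $\llb \top \rrb = X$, this says $s(x, \llb \top \rrb) \subseteq \llb p \rrb$, i.e.\ $x \Vdash \top \cto p$. As $V$ and $x$ were arbitrary, $\mo{G} \Vdash p \cp (\top \cto p)$.

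For the converse, I would argue contrapositively: suppose \eqref{eq:cond-corr} fails, so that there exist $x \in X$ and $a \in A$ with $x \in a$ but $s(x, X) \not\subseteq a$. Define an admissible valuation by $V(p) := a$ (and arbitrary values on the other proposition letters). Then $x \in V(p)$ so $x \Vdash p$, while $\llb \top \rrb = X$ and $s(x, \llb \top \rrb) = s(x, X) \not\subseteq a = \llb p \rrb$, so $x \not\Vdash \top \cto p$. Hence $(\mo{G}, V)$ does not validate $p \cp (\top \cto p)$, and therefore neither does $\mo{G}$.

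There is essentially no obstacle here; the proof is routine once one notices that $\llb \top \rrb = X$ and that $X \in A$ by the definition of a general frame, so the correspondence condition can be applied to the admissible filter $V(p)$ and the distinguished admissible filter $X$ appearing in the second argument of $s$.
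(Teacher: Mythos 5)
Your proof is correct and follows essentially the same route as the paper's own argument: apply \eqref{eq:cond-corr} to $V(p)$ for the forward direction, and refute validity with the valuation $V(p)=a$ for the converse, using that $\llb\top\rrb = X$. Nothing to add.
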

\begin{proof}
  Suppose $\mo{G}$ satisfies \eqref{eq:cond-corr} and let $V$ be an admissible valuation for 
  $\mo{G}$. Suppose $(\mo{G}, V), x \Vdash p$, then $x \in V(p)$ so by assumption $s(x,X)
  \subseteq V(p)$ thus $(\mo{G}, V), x \Vdash \top \cto p$. Since $x$ and $V$ are arbitrary this
  proves that $\mo{G}$ validates $p \cp (\top \cto p)$.

  For the converse, suppose $\mo{G}$ does not satisfy \eqref{eq:cond-corr}.
  Then there exist $x \in X$ and $a \in A$ such that $x \in a$ and $s(x, X) \not \subseteq a$. Let $V$ be a valuation
  such that $V(p) = a$. Then $(\mo{G}, V), x \Vdash p$, but $(\mo{G}, V), x \not \Vdash \top \cto
  p$ so $\mo{G}$ does not validate $p \cp \top \cto p$.
\end{proof}

\begin{lemma}
  A general frame $\mo{G} = (X, \ftop, \fmeet, s, A)$ validates
  \eqref{ax:veq} $p \cp q \cto p$ if and only if it satisfies
  \eqref{eq:veq-corr}: $x \in a$ implies $s(x, b) \subseteq a$
  for all $x \in X$ and $a, b \in A$.
\end{lemma}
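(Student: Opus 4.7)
The plan is to mirror the two preceding lemmas in the appendix: establish the ``if'' direction by unfolding the semantic clause for $\cto$, and prove the ``only if'' direction contrapositively by constructing a refuting admissible valuation from a witness to the failure of \eqref{eq:veq-corr}.

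For the forward direction, assume $\mo{G}$ satisfies \eqref{eq:veq-corr} and let $V$ be any admissible valuation. Suppose $(\mo{G}, V), x \Vdash p$; then $x \in V(p) \in A$. Since $\llb q \rrb^{(\mo{G},V)} \in A$ by closure of admissible filters under the logical operations, we may apply \eqref{eq:veq-corr} with $a := V(p)$ and $b := \llb q \rrb^{(\mo{G},V)}$ to obtain $s(x, \llb q \rrb^{(\mo{G},V)}) \subseteq V(p) = \llb p \rrb^{(\mo{G},V)}$. By the semantic clause for $\cto$ this gives $(\mo{G}, V), x \Vdash q \cto p$. As $x$ and $V$ are arbitrary, $\mo{G}$ validates $p \cp q \cto p$.

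For the converse, suppose \eqref{eq:veq-corr} fails. Then there exist $x \in X$ and $a, b \in A$ such that $x \in a$ but $s(x, b) \not\subseteq a$. Choose an admissible valuation $V$ with $V(p) = a$ and $V(q) = b$ (this is allowed since both $a, b \in A$). Then $(\mo{G}, V), x \Vdash p$, while $s(x, \llb q \rrb^{(\mo{G},V)}) = s(x, b) \not\subseteq a = \llb p \rrb^{(\mo{G},V)}$, so $(\mo{G}, V), x \not\Vdash q \cto p$. Hence $\mo{G}$ does not validate $p \cp q \cto p$.

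There is no real obstacle here; the only point that requires care is ensuring that $\llb q \rrb^{(\mo{G},V)}$ is admissible when applying \eqref{eq:veq-corr}, which is guaranteed by the closure conditions on $A$. The pattern of proof is identical to the two preceding lemmas, with the extra parameter $b$ handled by assigning it to $V(q)$ in the refuting valuation.
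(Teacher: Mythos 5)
Your proof is correct and follows essentially the same route as the paper's: the forward direction unfolds the semantic clause for $\cto$ using \eqref{eq:veq-corr} with $a = V(p)$ and $b = V(q)$, and the converse builds a refuting valuation from a witness to the failure of the condition. (Your observation that $\llb q \rrb^{(\mo{G},V)} = V(q) \in A$ is all that is needed for admissibility, since $q$ is a proposition letter.)
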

\begin{proof}
  Suppose $\mo{G}$ satisfies \eqref{eq:veq-corr} and let $V$ be an admissible valuation for
  $\mo{G}$ and suppose $(\mo{G}, V), x \Vdash p$. Then $x \in V(p)$. so by assumption $s(x,V(q))
  \subseteq V(p)$, hence $(\mo{G}, V), x \Vdash p \cto q$. Since $x$ and $V$ are arbitrary this
  proves that $\mo{G}$ validates $p \cp q \cto p$.

  For the converse, suppose $\mo{G}$ does not satisfy \eqref{eq:veq-corr}. Then there exist $x \in X$ and $a, b \in A$ 
  such that $x \in a$ and $s(x,b) \not\subseteq a$. Let $V$ be a valuation of the proposition letters
  such that $V(p) = a$ and $V(q) = b$. Then $(\mo{G}, V), x \Vdash p$ but $(\mo{G}, V), x \not
  \Vdash q \cto p$ and so $\mo{G}$ does not validate $p \cp q \cto p$.
\end{proof}

\begin{lemma}
  A general frame $\mo{G} = (X, \ftop, \fmeet, s, A)$ validates
  \eqref{ax:cs} $p \wedge q \cp p \cto q$ if and only if it satisfies
  \eqref{eq:cs-corr}: $x \in a \cap b$ implies $s(x, a) \subseteq b$
  for all $x \in X$ and $a, b \in A$.
\end{lemma}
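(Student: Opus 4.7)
The plan is to mimic the pattern of the preceding three correspondence lemmas in the appendix, arguing both directions via admissible valuations that assign the proposition letters $p, q$ to the filters $a, b$ witnessing (or violating) the frame condition.

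For the forward direction, I would assume $\mo{G}$ satisfies \eqref{eq:cs-corr} and let $V$ be any admissible valuation. Given $x \in X$ with $(\mo{G}, V), x \Vdash p \wedge q$, by the semantic clause for $\wedge$ we have $x \in V(p) \cap V(q)$. Since $V(p), V(q) \in A$, the hypothesis applied to $a = V(p)$ and $b = V(q)$ yields $s(x, V(p)) \subseteq V(q)$, which is exactly $(\mo{G}, V), x \Vdash p \cto q$. Since $x$ and $V$ are arbitrary, $\mo{G}$ validates $p \wedge q \cp p \cto q$.

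For the converse, I would argue by contrapositive. Suppose \eqref{eq:cs-corr} fails, so there exist $x \in X$ and $a, b \in A$ with $x \in a \cap b$ but $s(x, a) \not\subseteq b$. Since $a, b \in A$, I may pick an admissible valuation $V$ with $V(p) = a$ and $V(q) = b$ (extending arbitrarily on other proposition letters). Then $(\mo{G}, V), x \Vdash p \wedge q$ because $x \in V(p) \cap V(q)$, while $(\mo{G}, V), x \not\Vdash p \cto q$ because $s(x, V(p)) = s(x, a) \not\subseteq b = V(q)$. Hence $\mo{G}$ does not validate $p \wedge q \cp p \cto q$.

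There is no substantive obstacle: the argument is completely parallel to the preceding three lemmas, relying only on the semantic clauses for $\wedge$ and $\cto$ together with the fact that admissible valuations can be chosen freely on $A$. The only mild subtlety is ensuring that we use two distinct filter parameters $a$ and $b$ (rather than one), which is what distinguishes this axiom from \eqref{ax:refl} and aligns it with the shape of \eqref{ax:veq}.
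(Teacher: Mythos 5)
Your proof is correct and matches the paper's own argument essentially verbatim: both directions instantiate the valuation with $V(p)=a$ and $V(q)=b$ and read off the semantic clauses for $\wedge$ and $\cto$. No gaps; nothing further to add.
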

\begin{proof}
  Suppose $\mo{G}$ satisfies \eqref{eq:cs-corr} and let $V$ be
  an admissible valuation for $\mo{G}$.
  Suppose $(\mo{G}, V), x \Vdash p \wedge q$.
  Then $x \in V(p) \cap V(q)$ so by assumption $s(x, V(p)) \subseteq V(q)$,
  hence $(\mo{G}, V), x \Vdash p \cto q$.
  Since $x$ and $V$ are arbitrary this proves that $\mo{G}$ validates
  $p \wedge q \cp p \cto q$.
  
  For the converse, suppose $\mo{G}$ does not satisfy \eqref{eq:refl-corr}.
  Then there exist $x \in X$ and $a, b \in A$ such that
  $x \in a \cap b$ while $s(x, a) \not\subseteq b$.
  Let $V$ be a valuation of the proposition letters such that $V(p) = a$
  and $V(q) = b$. Then $(\mo{G}, V), x \Vdash p \wedge q$ but $x \not\Vdash p \cto q$,
  so $\mo{G}$ does not validate $p \wedge q \cp p \cto q$.
\end{proof}

  We now prove the ``moreover'' part of Proposition~\ref{prop:triv-fill-corr}
  for general frames that satisfy the HMS separation axiom.
  This suffices because both full and descriptive general frames satisfy it.

\begin{lemma}
  Let $\mo{G} = (X, \ftop, \fmeet, s, A)$ be a general frame that satisfies
  the HMS separation axiom. Then $\mo{G}$ validates
  \eqref{ax:veq} $p \cp q \cto p$ if and only if it satisfies
  \eqref{eq:veq-corr-fd}: $s(x, a) \subseteq \ua x$
  for all $x \in X$ and $a \in A$.
\end{lemma}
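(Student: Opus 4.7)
The plan is to reduce the problem to the previous lemma, which established that $\mo{G}$ validates $\axref{ax:veq}$ if and only if $\mo{G}$ satisfies \eqref{eq:veq-corr}: for all $x \in X$ and $a, b \in A$, $x \in a$ implies $s(x, b) \subseteq a$. Given this, it suffices to show that under the HMS separation axiom, \eqref{eq:veq-corr} is equivalent to \eqref{eq:veq-corr-fd}.

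The easy direction is \eqref{eq:veq-corr-fd} $\Rightarrow$ \eqref{eq:veq-corr}: suppose $s(x, b) \subseteq \ua x$ for all $x \in X$ and $b \in A$. If $x \in a$ for some $a \in A$, then since admissible subsets are filters and hence upward closed, we get $s(x, b) \subseteq \ua x \subseteq a$. This direction needs no appeal to HMS.

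The main work is the converse \eqref{eq:veq-corr} $\Rightarrow$ \eqref{eq:veq-corr-fd}, and this is exactly where the HMS separation axiom is used. I argue by contrapositive: suppose \eqref{eq:veq-corr-fd} fails, so there exist $x \in X$ and $a \in A$ together with some $y \in s(x, a)$ such that $x \not\fleq y$. By HMS separation (applied to the collection of admissible filters, so that the statement is meaningful in the general-frame setting, and which holds both for full frames, where $\ua x$ itself is admissible, and for descriptive frames, where clopen filters separate points), there exists an admissible filter $c \in A$ with $x \in c$ and $y \notin c$. If \eqref{eq:veq-corr} held, then $x \in c$ would force $s(x, a) \subseteq c$, contradicting $y \in s(x, a) \setminus c$. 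So \eqref{eq:veq-corr} fails as well.

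Combining these two implications with the previous lemma yields the desired equivalence. The only subtlety — and what I would flag as the delicate step — is ensuring that HMS is invoked in the correct form for general frames: the separating filter must be admissible, which is automatic in the full case ($A = \Filt(X, \ftop, \fmeet)$ contains all principal upsets) and in the descriptive case (where admissible $=$ clopen and $\topo{X}$ is an L-space satisfying the HMS axiom of Definition~\ref{def:lspaces}).
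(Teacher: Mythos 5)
Your proof is correct and uses essentially the same idea as the paper's: the HMS separation axiom supplies an admissible filter containing $x$ but not the offending $y \in s(x,a)$ with $x \not\fleq y$. The only (harmless) difference is that you factor the converse through the previously established correspondence \eqref{eq:veq-corr}, whereas the paper constructs the refuting valuation $V(p)=b$, $V(q)=a$ directly; your remark that the separating filter must be admissible matches the paper's reading of HMS for general frames.
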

\begin{proof}
  The direction from right to left follows from the fact that any general
  frame that satisfies \eqref{eq:veq-corr-fd} also satisfies \eqref{ax:veq}.

  For the converse, suppose $\mo{G}$ does not sastify \eqref{eq:veq-corr-fd}. 
  Then there exists $x \in X$ and $a \in A$ such that $s(x, a) \not \subseteq \ua x$.
  So there must be some $y \in s(x, a)$ such that $x \not \fleq y$.
  As a consequence of the HMS separation axiom we can find
  some $b \in A$ containing $x$ but not $y$.
  In particular this implies $s(x, a) \not\subseteq b$.
  Now let $V$ be a valuation for $\mo{G}$ such that $V(p) = b$ and $V(q) = a$.
  Then $(\mo{G}, V), x \Vdash p$ because $x \in b = V(p)$,
  but $(\mo{G}, V), x \not\Vdash q \cto p$ because $s(x, V(q)) \not\subseteq V(p)$.
\end{proof}

\begin{lemma}
  Let $\mo{G} = (X, \ftop, \fmeet, s, A)$ be a general frame that satisfies
  the HMS separation axiom. Then $\mo{G}$ validates
  \eqref{ax:cs} $p \wedge q \cp p \cto q$ if and only if it satisfies
  \eqref{eq:cs-corr-fd}: $x \in a $ implies $s(x, a) \subseteq \ua x$
  for all $x \in X$ and $a \in A$.
\end{lemma}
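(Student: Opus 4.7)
The plan is to prove the two directions separately, using the unrestricted correspondence \eqref{eq:cs-corr} (already established) as a stepping stone. The right-to-left direction is essentially a corollary: if $\mo{G}$ satisfies \eqref{eq:cs-corr-fd} and $x \in a \cap b$, then because $b$ is an admissible filter containing $x$ we have $\ua x \subseteq b$, so $s(x,a) \subseteq \ua x \subseteq b$. This gives \eqref{eq:cs-corr}, and hence validity of \eqref{ax:cs} by the earlier lemma.

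For the left-to-right direction I would argue by contrapositive, mimicking the analogous argument for \eqref{ax:veq} in the preceding lemma. Assume \eqref{eq:cs-corr-fd} fails. Then there are $x \in X$ and $a \in A$ with $x \in a$ and some $y \in s(x,a)$ such that $x \not\fleq y$. The HMS separation axiom (available in both full and descriptive general frames) then yields an admissible filter $b \in A$ with $x \in b$ and $y \notin b$. In particular $s(x,a) \not\subseteq b$, while $x \in a \cap b$. Defining an admissible valuation $V$ by $V(p) = a$ and $V(q) = b$ thus gives $(\mo{G},V), x \Vdash p \wedge q$ but $(\mo{G},V), x \not\Vdash p \cto q$, so $\mo{G}$ does not validate \eqref{ax:cs}.

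The main subtlety, as in the previous lemma, lies in choosing the separating admissible filter $b$: we need the HMS axiom to guarantee that the separation can be performed \emph{inside} the collection $A$ of admissible filters, since the valuation $V$ must take values in $A$. Once this is in place the rest is bookkeeping and the earlier correspondence for \eqref{eq:cs-corr} is not even needed in this direction.
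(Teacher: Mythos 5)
Your proof is correct and follows essentially the same route as the paper: right-to-left by observing that \eqref{eq:cs-corr-fd} implies \eqref{eq:cs-corr} via upward closure of the admissible filter $b$, and left-to-right by contraposition using the HMS separation axiom to manufacture a falsifying admissible valuation. In fact you orient the separating filter correctly ($x \in b$ and $y \notin b$, which is what HMS yields from $x \not\fleq y$ and is needed for $x \Vdash p \wedge q$), whereas the paper's text states it the other way around, evidently a typo.
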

\begin{proof}
  The direction from right to left follows from the fact that any general
  frame that satisfies \eqref{eq:cs-corr-fd} also satisfies \eqref{eq:cs-corr}.
  
  For the converse, suppose $\mo{G}$ does not satisfy \eqref{eq:refl-corr}.
  Then there exist $x \in X$ and $a \in A$ such that
  $x \in a$ while $s(x, a) \not\subseteq \ua x$.
  So there must be some $y \in s(x, a)$ such that $x \not\fleq y$.
  As a consequence of the HMS separation axiom we can find some
  $b \in A$ containing $y$ but not $x$.
  Now let $V$ be a valuation of the proposition letters such that $V(p) = a$
  and $V(q) = b$. Then by construction we have $x \Vdash p \wedge q$ but
  $x \not\Vdash p \cto q$, so $\mo{G}$ does not validate $p \wedge q \cp p \cto q$.
\end{proof}

\subsection{Correspondence results for Section~\ref{subsec:fill-in-agn}}

\begin{lemma}\label{lem:det-corr}
  A general frame $\mo{G} = (X, \ftop, \fmeet, s, A)$ validates
  \eqref{ax:det} $(\top \cto p) \cp p$ if and only if it satisfies
  \eqref{eq:det-corr}: if $s(x, X) \subseteq a$ then $x \in a$
  for all $x \in X$ and $a \in A$.
\end{lemma}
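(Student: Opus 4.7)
The plan is to prove both directions by relating the semantics of $\top \cto p$ to the selection function applied to the top filter. The key observation is that $\llb \top \rrb^{\mo{M}} = X$ for any general model $\mo{M}$, and hence for any admissible valuation $V$ and any $x \in X$ we have $(\mo{G}, V), x \Vdash \top \cto p$ if and only if $s(x, X) \subseteq V(p)$. So validity of $\axref{ax:det}$ translates directly into a statement about $s(x, X)$ and admissible filters.

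For the forward direction (correspondence to validity), I would suppose \eqref{eq:det-corr} holds and fix an arbitrary admissible valuation $V$. If $(\mo{G}, V), x \Vdash \top \cto p$, then $s(x, X) \subseteq V(p)$. Since $V(p) \in A$, the assumption \eqref{eq:det-corr} immediately gives $x \in V(p)$, i.e.\ $(\mo{G}, V), x \Vdash p$. As $x$ and $V$ were arbitrary, $\mo{G}$ validates $(\top \cto p) \cp p$.

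For the converse, I argue contrapositively. Suppose \eqref{eq:det-corr} fails, so there exist $x \in X$ and $a \in A$ with $s(x, X) \subseteq a$ but $x \notin a$. Choose an admissible valuation $V$ with $V(p) = a$; this is possible precisely because $a \in A$. Then $s(x, \llb \top \rrb^{(\mo{G}, V)}) = s(x, X) \subseteq a = V(p)$, so $(\mo{G}, V), x \Vdash \top \cto p$, while $x \notin V(p)$ gives $(\mo{G}, V), x \not\Vdash p$. Hence $\mo{G}$ does not validate $(\top \cto p) \cp p$.

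There is no real obstacle here; the only point that requires a moment's thought is the use of the admissible valuation in the contrapositive, which is exactly parallel to the arguments carried out in the preceding lemmas of Appendix~\ref{subsec:top-fill-in-proof}. The proof is almost mechanical once one unfolds the semantic clause for $\cto$ and uses $\llb \top \rrb = X$.
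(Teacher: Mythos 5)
Your proof is correct and follows essentially the same route as the paper's: unfold the semantics of $\top \cto p$ using $\llb \top \rrb = X$, use \eqref{eq:det-corr} directly for one direction, and for the other take the witnessing pair $(x, a)$ and the admissible valuation $V(p) = a$ to refute validity. No gaps.
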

\begin{proof}
  Suppose $\mo{G}$ satisfies \eqref{eq:det-corr}.
  Let $x \in X$ and let $V$ be an admissible valuation for $\mo{G}$.
  If $(\mo{G}, V), x \Vdash \top \cto p$ then $s(x, X) \subseteq V(p)$ and since
  $V(p) \in A$ the assumption tells us that $x \in V(p)$.
  Therefore $(\mo{G}, V), x \Vdash p$.
  $x \in s(x, X)$ by assumption we find $(\mo{G}, V), x \Vdash p$.
  Since this holds for all worlds and valuations, $\mo{G}$ validates
  $\axref{ax:det}$.
  
  For the converse, suppose that \eqref{eq:det-corr} does not hold.
  Then there exist $x \in X$ and $a \in A$ such that $s(x, X) \subseteq a$
  while $x \notin a$. Let $V$ be a valuation such that $V(p) = a$.
  Then we have $(\mo{G}, V), x \Vdash \top \cto p$ while $(\mo{G}, V), x \not\Vdash p$,
  so $\mo{G}$ does not validate $\axref{ax:det}$.
\end{proof}

\begin{lemma}
  A general frame $\mo{G} = (X, \ftop, \fmeet, s, A)$ validates
  \eqref{ax:expl} $\top \cp (\bot \cto p)$ if and only if it satisfies
  \eqref{eq:expl-corr}: $s(x, \{ \ftop \}) = \{ \ftop \}$ for all $x \in X$.
\end{lemma}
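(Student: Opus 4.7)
The plan is to unpack the semantic clauses for $\bot$ and $\cto$ and then exploit that $\{\ftop\}$ is itself an admissible filter. Recall from Definition~\ref{def:L-model} that $\mo{M}, x \Vdash \bot$ iff $x = \ftop$, so $\llb \bot \rrb = \{\ftop\}$ in every model. Combining this with Definition~\ref{def:sf-model}, the consequence pair $\top \cp (\bot \cto p)$ is validated by a model $(\mo{G}, V)$ exactly when $s(x, \{\ftop\}) \subseteq V(p)$ for every $x \in X$.

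For the implication from right to left, suppose $s(x, \{\ftop\}) = \{\ftop\}$ for all $x \in X$. Given any admissible valuation $V$, the filter $V(p)$ contains $\ftop$ (filters are nonempty and upward closed under $\fleq$), hence $s(x, \{\ftop\}) = \{\ftop\} \subseteq V(p)$. Thus $(\mo{G}, V) \Vdash \top \cp (\bot \cto p)$ for every admissible $V$, so $\mo{G}$ validates $\axref{ax:expl}$.

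For the converse, suppose $\mo{G}$ validates $\axref{ax:expl}$. Since the admissible set $A$ is required to contain $\{\ftop\}$ by the definition of a general frame, we may choose an admissible valuation $V$ with $V(p) = \{\ftop\}$. Then validity at an arbitrary $x \in X$ gives $s(x, \{\ftop\}) \subseteq \{\ftop\}$. But $s(x, \{\ftop\})$ is itself a filter and hence contains $\ftop$, so the reverse inclusion is automatic, yielding $s(x, \{\ftop\}) = \{\ftop\}$ as required.

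I do not anticipate a genuine obstacle: the proof is a routine unfolding of definitions, entirely parallel in structure to the corresponding verification for $\axref{ax:det}$ in Lemma~\ref{lem:det-corr}. The only point that warrants mention is the use of the closure condition $\{\ftop\} \in A$ on admissible sets, which is precisely what guarantees that we may take $V(p) = \{\ftop\}$ to extract the frame condition from validity of the axiom.
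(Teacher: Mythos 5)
Your proof is correct and follows essentially the same route as the paper's: unfold the semantics of $\bot \cto p$, use $s(x,\{\ftop\}) = \{\ftop\} \subseteq V(p)$ for soundness, and for the converse force the antecedent's truth set to be $\{\ftop\}$ (the paper does this via the instance $\bot \cto \bot$, you via the admissible valuation $V(p) = \{\ftop\}$, which amounts to the same thing). Your write-up is in fact slightly more careful, since it makes explicit both the closure condition $\{\ftop\} \in A$ and the observation that $s(x,\{\ftop\})$, being a filter, automatically contains $\ftop$, which the paper leaves implicit.
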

\begin{proof}
  Suppose $\mo{G}$ satisfies \eqref{eq:expl-corr}. Let $x \in X$ and $V$ be an admissible
  valuation for $\mo{G}$. By assumption $s(x,\{\ftop\}) = \{\ftop\} \subseteq p$ and so
  $(\mo{G},V), x \Vdash \bot \cto p$. Since this holds for all worlds and valuations, $\mo{G}$
  validates \eqref{ax:expl}.

  For the converse, suppose that \eqref{eq:expl-corr} does not hold.
  Then there exist $x \in X$ such that $s(x,\{\ftop\}) \not = \{\ftop\}$. Let $V$ be any
  valuation, then $(\mo{M}, V), x \not \Vdash \bot \cto \bot$. 
  So $\mo{G}$ does not validate \eqref{ax:expl}.
\end{proof}

  For the moreover part we use either the discriteness of a full
  general frame, or the compactness of a descriptive one.

\begin{lemma}\label{lem:det-corr-fd}
  Let $\mo{G} = (X, \ftop, \fmeet, s, A)$ be a general frame that is either
  full or descriptive. Then $\mo{G}$ validates
  \eqref{ax:det} $(\top \cto p) \cp p$ if and only if it satisfies
  \eqref{eq:det-corr-fd}: $x \in s(x, X)$ for all $x \in X$.
\end{lemma}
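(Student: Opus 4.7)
The plan is to leverage Lemma~\ref{lem:det-corr}, which already establishes that $\mo{G}$ validates $\axref{ax:det}$ if and only if it satisfies \eqref{eq:det-corr}: $s(x,X) \subseteq a$ implies $x \in a$ for all $x \in X$ and $a \in A$. It therefore suffices to show, under the extra hypothesis that $\mo{G}$ is full or descriptive, that \eqref{eq:det-corr} is equivalent to \eqref{eq:det-corr-fd}. The direction \eqref{eq:det-corr-fd}$\Rightarrow$\eqref{eq:det-corr} is immediate.

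For the converse in the full case, the set of admissible filters $A = \Filt(X, \ftop, \fmeet)$ contains every filter; since $s(x, X)$ is itself a filter by the definition of a selection function, instantiating \eqref{eq:det-corr} at $a := s(x, X)$ with the trivial inclusion $s(x, X) \subseteq s(x, X)$ immediately yields $x \in s(x, X)$.

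The descriptive case is more delicate: $s(x, X)$ need only be a \emph{closed} filter of the underlying L-space $\topo{X}$, while $A = \Fclp(\topo{X})$ contains only clopen filters. I would reduce to the following representation claim: in every L-space, every closed filter $F$ equals $\bigcap\{ a \in \Fclp(\topo{X}) \mid F \subseteq a \}$, the intersection of the clopen filters containing it. Granted this, \eqref{eq:det-corr} forces $x$ into every clopen filter containing $s(x,X)$, and intersecting yields $x \in s(x,X)$ as desired.

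The main obstacle will be the representation claim, which I would prove via the duality of Theorem~\ref{thrm:weakduality}. Setting $L := \Fclp(\topo{X})$ and identifying $\topo{X}$ with $\Ftop(L)$, the points of $\Ftop(L)$ become filters of $L$ (ordered by inclusion), and the clopen filters are precisely the sets $\theta_L(b) = \{ q \mid b \in q \}$ for $b \in L$. The technical crux is that for any closed filter $F$ of $\Ftop(L)$, the directed net $\{ \bigcap_{p \in F'} p \mid F' \subseteq F \text{ finite}\}$ (which lies in $F$ by meet-closure) converges to $\bigcap F$ in the L-space topology. Convergence can be verified subbase-wise: if $b \in \bigcap F$ then $b$ belongs to every $\bigcap_{p \in F'} p$, while if $b \notin \bigcap F$ then some $p_0 \in F$ witnesses $b \notin p_0$, so $b \notin \bigcap_{p \in F'} p$ as soon as $F' \ni p_0$. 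Closedness of $F$ then forces $\bigcap F \in F$, and upward closure of $F$ yields $F = \{q \mid \bigcap F \subseteq q\} = \bigcap_{b \in \bigcap F} \theta_L(b)$, exhibiting $F$ as an intersection of clopen filters containing it.
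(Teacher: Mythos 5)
Your proof is correct and follows essentially the same route as the paper's: the easy direction and the full case are identical, and in both arguments the crux of the descriptive case is producing an admissible (clopen) filter containing $s(x,X)$ but omitting $x$. The only real difference is that the paper dispatches that step with a one-line appeal to compactness, whereas you actually prove it: identifying $\topo{X}$ with $\Ftop(\Fclp(\topo{X}))$ and showing by a net-convergence argument that every closed filter $F$ contains $\bigcap F$, hence equals the principal up-set $\ua(\bigcap F) = \bigcap_{b \in \bigcap F} \theta_L(b)$, an intersection of clopen filters containing $F$. That representation claim and its proof check out (the subbase-wise verification of convergence and the fact that closed sets absorb limits of nets need no separation axioms), and it is arguably the cleaner way to discharge the separation step: a naive covering argument from the HMS axiom plus compactness would produce a $\genFil$-join of clopen filters covering $s(x,X)$, which in the non-distributive setting is not obviously disjoint from $x$. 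So your write-up supplies a correct and genuinely informative expansion of the one step the paper leaves implicit, with the pleasant byproduct that closed filters of an L-space are exactly the principal up-sets.
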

\begin{proof}
  If $\mo{G}$ satisfies \eqref{eq:det-corr-fd} then it also satisfies
  \eqref{eq:det-corr}, so by Lemma~\ref{lem:det-corr} it validtes $\axref{ax:det}$.
  Conversely, if $\mo{G}$ does not satisfy \eqref{eq:det-corr-fd}
  then there exists a world $x \in X$ such that $x \notin s(x, X)$.
  Now we can find an admissible filter $a$ containing $s(x, X)$ but not $x$:
  \begin{itemize}
    \item if $\mo{G}$ is full we can take $a = s(x, X)$;
    \item if $\mo{G}$ is descriptive then we can use compactness to find
          such an $a$.
  \end{itemize}
  Let $V$ be a valuation such that $V(p) = a$. Then
  $(\mo{G}, V) \Vdash \top \cto p$ but $(\mo{G}, V) \not\Vdash p$,
  so $\axref{ax:det}$ is not valid on $\mo{G}$.
\end{proof}

\subsection{Correspondence results for Section~\ref{subsec:non-top}}
\label{subsec:non-top-proof}

  The next lemmas prove Proposition~\ref{prop:refl-mp-veq-corr}.

\begin{lemma}
  A general frame $\mo{G} = (X, \ftop, \fmeet, s, A)$ validates
  \eqref{ax:pnp} $p \wedge (p \cto \bot) \cp \bot$ if and only if it satisfies
  \eqref{eq:pnp-corr}: if $x \in a$ and $s(x, a) = \{ \ftop \}$ then $x = \ftop$
  for all $x \in X$ and $a \in A$.
\end{lemma}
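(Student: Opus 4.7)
The plan is to prove the biconditional by the usual pair of contrapositive-style arguments one uses for correspondence results, completely analogous to the lemmas in Section~\ref{subsec:top-fill-in-proof} and Lemma~\ref{lem:det-corr}. The key observation is simply that in any selection L-model, $\llb \bot \rrb = \{\ftop\}$, so a formula of the form $\phi \cto \bot$ holds at $x$ iff $s(x, \llb \phi \rrb) \subseteq \{\ftop\}$, and since $s(x, \llb \phi \rrb)$ is always a filter (hence contains $\ftop$), this is equivalent to $s(x, \llb \phi \rrb) = \{\ftop\}$.

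For the implication from \eqref{eq:pnp-corr} to validity of \axref{ax:pnp}, I would fix an admissible valuation $V$ and a world $x$ with $(\mo{G}, V), x \Vdash p \wedge (p \cto \bot)$. Then $x \in V(p)$ and $s(x, V(p)) \subseteq \llb \bot \rrb = \{\ftop\}$; combining this with the fact that $s(x, V(p))$ is a filter gives $s(x, V(p)) = \{\ftop\}$. The hypothesis \eqref{eq:pnp-corr} applied to $a = V(p) \in A$ then yields $x = \ftop$, i.e.\ $(\mo{G}, V), x \Vdash \bot$. Since $x$ and $V$ are arbitrary, $\mo{G} \Vdash \axref{ax:pnp}$.

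For the converse I would argue by contrapositive: assume \eqref{eq:pnp-corr} fails, so there exist $x \in X$ and $a \in A$ with $x \in a$, $s(x, a) = \{\ftop\}$, and $x \neq \ftop$. Pick any admissible valuation $V$ with $V(p) = a$ (which is possible because $a \in A$). Then $(\mo{G}, V), x \Vdash p$ since $x \in a$, and $(\mo{G}, V), x \Vdash p \cto \bot$ since $s(x, V(p)) = \{\ftop\} = \llb \bot \rrb^{(\mo{G},V)}$. Thus $x$ satisfies $p \wedge (p \cto \bot)$ but not $\bot$ (as $x \neq \ftop$), so $\mo{G} \not\Vdash \axref{ax:pnp}$.

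Neither direction presents any real obstacle; the only subtlety worth flagging is the step where one upgrades $s(x, V(p)) \subseteq \{\ftop\}$ to an equality, which relies on $s$ taking values in filters and so is automatic from Definition~\ref{def:selectionfunctionlframe}. The argument works uniformly for general frames, and in particular specialises to full selection L-frames and to descriptive ones (selection L-spaces viewed as general frames via $A = \Fclp(\topo{X})$), so no further work is needed for the classes of frames used later.
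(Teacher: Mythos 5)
Your proposal is correct and follows essentially the same argument as the paper's own proof: both directions proceed by instantiating $V(p) = a$ and using that $\llb \bot \rrb = \{\ftop\}$, so that $x \Vdash p \cto \bot$ amounts to $s(x, V(p)) = \{\ftop\}$. Your explicit remark that the inclusion $s(x,V(p)) \subseteq \{\ftop\}$ upgrades to an equality because $s$ takes values in filters (which all contain $\ftop$) is a small point the paper leaves implicit, but otherwise the two proofs coincide.
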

\begin{proof}
  Suppose $\mo{G}$ satisfies \eqref{eq:pnp-corr}.
  Let $x \in X$ and let $V$ be a valuation for $\mo{G}$.
  If $(\mo{G}, V), x \Vdash p \wedge (p \cto \bot)$ then
  $x \in V(p)$ and $s(x, V(p)) = \{ \ftop \}$.
  So by assumption $x = 1$, hence $(\mo{G}, V), x \Vdash \bot$.
  
  If $\mo{G}$ does not satisfy \eqref{eq:pnp-corr},
  then there exist $x \in X$ and $a \in A$ such that
  $x \in a$ and $s(x, a) = \{ \ftop \}$ but $x \neq \ftop$.
  Now if $V$ is a valuation such that $V(p) = a$, we have
  $(\mo{G}, V), x \Vdash p \wedge (p \cto \bot)$ while
  $(\mo{G}, V), x \not\Vdash \bot$.
\end{proof}

\begin{lemma}\label{lem:mp-corr}
  A general frame $\mo{G} = (X, \ftop, \fmeet, s, A)$ validates
  \eqref{ax:mp} $p \wedge (p \cto q) \cp q$ if and only if it satisfies
  \eqref{eq:mp-corr}: $x \in a$ and $s(x,a) \subseteq b$ implies $x \in b$
  for all $x \in X$ and $a,b \in A$.
\end{lemma}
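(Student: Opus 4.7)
The plan is to prove both directions by directly unpacking the semantics of the axiom $p \wedge (p \cto q) \cp q$ and aligning it with the frame condition \eqref{eq:mp-corr}, exactly as was done for the previous axioms in Proposition~\ref{prop:triv-fill-corr}. Since a valuation assigns proposition letters to elements of $A$, the interpretation of $p$ and $q$ will always be admissible, which ensures that the quantification over $a, b \in A$ in \eqref{eq:mp-corr} lines up precisely with the quantification over interpretations of the proposition letters.

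For the soundness direction, I would assume that $\mo{G}$ satisfies \eqref{eq:mp-corr} and fix an admissible valuation $V$ together with a world $x \in X$ such that $(\mo{G}, V), x \Vdash p \wedge (p \cto q)$. Unpacking this gives $x \in V(p)$ and $s(x, V(p)) \subseteq V(q)$. Setting $a := V(p)$ and $b := V(q)$ (both in $A$), the hypothesis \eqref{eq:mp-corr} yields $x \in V(q)$, i.e.\ $(\mo{G}, V), x \Vdash q$. Since $x$ and $V$ were arbitrary, $\mo{G}$ validates \axref{ax:mp}.

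For the converse, I would argue contrapositively. Assume \eqref{eq:mp-corr} fails, so there exist $x \in X$ and $a, b \in A$ with $x \in a$, $s(x,a) \subseteq b$, and $x \notin b$. I would then define an admissible valuation $V$ by setting $V(p) := a$ and $V(q) := b$ (and on the remaining proposition letters arbitrarily, say to $X$). From $x \in a = V(p)$ we get $(\mo{G}, V), x \Vdash p$, and from $s(x, V(p)) = s(x, a) \subseteq b = V(q)$ we get $(\mo{G}, V), x \Vdash p \cto q$, hence $(\mo{G}, V), x \Vdash p \wedge (p \cto q)$. On the other hand, $x \notin b = V(q)$, so $(\mo{G}, V), x \not\Vdash q$. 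Thus $\mo{G}$ fails to validate \axref{ax:mp}.

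There is no real obstacle here: the argument is the standard template used repeatedly in Appendix~\ref{subsec:top-fill-in-proof}, and the only thing to check is that $V(p), V(q) \in A$, which holds by the very definition of an admissible valuation. The only mild subtlety is that the quantifier over $b \in A$ in \eqref{eq:mp-corr} is handled automatically because $V(q) \in A$; if we had instead phrased the frame condition with $b$ ranging over arbitrary filters of $(X, \ftop, \fmeet)$, we would need to restrict to admissible ones to stay within the scope of the model's valuations.
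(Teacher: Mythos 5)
Your proof is correct and follows essentially the same argument as the paper: unpack the semantics with $a = V(p)$, $b = V(q)$ for soundness, and construct the falsifying valuation $V(p)=a$, $V(q)=b$ from a failure of \eqref{eq:mp-corr} for the converse. The added remarks about admissibility of $V(p), V(q)$ are accurate but the paper leaves them implicit.
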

\begin{proof}
  Suppose $\mo{G}$ satisfies \eqref{eq:mp-corr}. Let $x \in X$ and let $V$ be a valuation for
  $\mo{G}$. If $(\mo{G}, V), x \Vdash p \wedge (p \cto q)$, then $x \in V(p)$ and $s(x,V(p)) 
  \subseteq V(q)$ so $x \in V(q)$ hence $(\mo{G}, V), x \Vdash q$. 
  Since $x$ and $V$ were arbitrary, $\mo{G}$ validates \eqref{ax:mp}.

  For the converse, suppose that \eqref{eq:mp-corr} does not hold. Then there exist $x \in X$
  and $a,b \in A$ such that $x \in a$ and $s(x,a) \subseteq b$, but $x$ is not in $b$.
  Let $V$ be a valuation such that
  $V(p) = a$ and $V(q) = b$, then $(\mo{G}, V), x \Vdash p \land (p \cto q)$, but $(\mo{G},
  V), x \not \Vdash q$. So $\mo{G}$ does not validate \eqref{ax:mp}.
\end{proof}

  We now prove the ``moreover'' part of Proposition~\ref{prop:refl-mp-veq-corr}.

\begin{lemma}
  Let $\mo{G} = (X, \ftop, \fmeet, s, A)$ be a general frame that is either
  full or descriptive. Then $\mo{G}$ validates
  \eqref{ax:mp} $p \wedge (p \cto q) \cp q$ if and only if it satisfies
  \eqref{eq:mp-corr-fd}: $x \in a$ implies $x \in s(x, a)$
  for all $x \in X$ and $a \in A$.
\end{lemma}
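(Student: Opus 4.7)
The structure of the argument mirrors that of Lemma~\ref{lem:det-corr-fd}. I split the proof into the two implications.

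For the right-to-left direction, suppose $\mo{G}$ satisfies \eqref{eq:mp-corr-fd}. Then for any $x \in X$ and $a, b \in A$ with $x \in a$ and $s(x, a) \subseteq b$, we have $x \in s(x, a) \subseteq b$, so $\mo{G}$ also satisfies \eqref{eq:mp-corr}. By Lemma~\ref{lem:mp-corr}, $\mo{G}$ validates $\axref{ax:mp}$.

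For the converse, I argue by contrapositive. Suppose \eqref{eq:mp-corr-fd} fails, so there exist $x \in X$ and $a \in A$ with $x \in a$ but $x \notin s(x, a)$. The goal is to produce an admissible filter $b$ with $s(x, a) \subseteq b$ and $x \notin b$; once this is done, the valuation with $V(p) = a$ and $V(q) = b$ witnesses $x \Vdash p \wedge (p \cto q)$ while $x \not\Vdash q$, refuting $\axref{ax:mp}$ on $\mo{G}$.

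Finding such a $b$ splits into two cases, just as in Lemma~\ref{lem:det-corr-fd}:
\begin{itemize}
  \item if $\mo{G}$ is full, every filter is admissible, so we can simply take $b := s(x, a)$;
  \item if $\mo{G}$ is descriptive, $b$ is obtained from the topology: $s(x, a)$ is a closed filter by the definition of a topological selection function, and in an L-space every closed filter is the intersection of the clopen filters containing it (a consequence of the HMS separation axiom together with compactness). Since $x \notin s(x, a)$, at least one such clopen filter must avoid $x$, giving the required admissible $b$.
\end{itemize}

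The only substantive point is the descriptive case, and specifically the observation that closed filters in an L-space are intersections of clopen filters. This is the standard consequence of compactness and HMS separation underlying the duality from Theorem~\ref{thrm:weakduality}, so I expect to dispatch it in a single line rather than elaborate further.
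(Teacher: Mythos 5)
Your proof is correct and follows essentially the same route as the paper: the right-to-left direction reduces \eqref{eq:mp-corr-fd} to \eqref{eq:mp-corr} and invokes Lemma~\ref{lem:mp-corr}, and the converse constructs the separating admissible filter $b \supseteq s(x,a)$ with $x \notin b$ by the same full/descriptive case split that the paper delegates to the argument of Lemma~\ref{lem:det-corr-fd}. Your explicit justification of the descriptive case (closed filters are intersections of the clopen filters containing them, via compactness and HMS separation) is just a spelled-out version of the paper's appeal to compactness.
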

\begin{proof}
  If $\mo{G}$ satisfies \eqref{eq:mp-corr-fd} then it also satisfies
  \eqref{eq:mp-corr}, so by Lemma~\ref{lem:mp-corr} it validtes $\axref{ax:mp}$.

  Conversely, suppose there exist $x \in X$ and $a \in A$ such that
  $x \in a$ but $x \notin s(x, a)$. Then in the same way as in
  Lemma~\ref{lem:det-corr-fd} we can find a filter $b$ containing
  $s(x, a)$ which does not contain $x$.
  Let $V$ be a valuation such that $V(p) = a$ and $V(q) = b$.
  Then $(\mo{G}, V), x \Vdash p \wedge (p \cto q)$ but
  $(\mo{G}, V), x \not\Vdash q$, so $\axref{ax:mp}$ is not valid.
\end{proof}

\end{document}